\DeclareTextFontCommand{\emph}{\bfseries\upshape}
\newtheorem{thm}{Theorem}[section]
\newtheorem{cor}[thm]{Corollary}
\newtheorem{lem}[thm]{Lemma}
\newtheorem{prop}[thm]{Proposition}
\theoremstyle{definition}
\newtheorem{defi}[thm]{Definition}
\newtheorem{example}[thm]{Example}
\newtheorem{rem}[thm]{Remark}
\newtheorem{rmk}[thm]{Remark}
\numberwithin{equation}{section}
\def\Der{\operatorname{Der}}
\newcommand{\dM}{\mathrm{d}}
\newcommand{\Lie}{\mathsf{Lie}}
\newcommand{\Leib}{\mathsf{Leib}}
\newcommand{\Ass}{\mathsf{Ass}}
\newcommand{\preLie}{\mathsf{preLie}}
\newcommand{\gl}{\mathfrak{gl}}
\newcommand{\K}{\mathbf{K}}
\newcommand{\ad}{\mathrm{ad}}
\newcommand{\half}{\frac{1}{2}}
\DeclareMathOperator{\End}{End}
\DeclareMathOperator{\Hom}{Hom}
\def\id{\operatorname{id}}
\newcommand{\nc}{\newcommand}
\newcommand{\delete}[1]{}
\nc{\mlabel}[1]{\label{#1}}  % Use this to suppress names
\nc{\mcite}[1]{\cite{#1}}  % Use this to suppress names
\nc{\mref}[1]{\ref{#1}}  % Use this to suppress names
\nc{\mbibitem}[1]{\bibitem{#1}} % Use this to show number
\nc{\mlabel}[1]{\label{#1}{\hfill \hspace{1cm}{\bf{{\ }\hfill(#1)}}}}
\nc{\mcite}[1]{\cite{#1}{{\bf{{\ }(#1)}}}}  % Use this lines to show names
\nc{\mref}[1]{\ref{#1}{{\bf{{\ }(#1)}}}}  % Use this lines to show names
\nc{\mbibitem}[1]{\bibitem[\bf #1]{#1}} % Use this to show name
\newcommand{\emptycomment}[1]{}
\newcommand{\emptycomment}[1]{}
\newcommand{\lon }{\,\rightarrow\,}
\newcommand{\g}{\mathfrak g}
\newcommand{\h}{\mathfrak h}
\newcommand{\B}{\mathsf{B}}
\newcommand{\sfF}{\mathsf{F}}
\newcommand{\G}{\mathsf{G}}
\newcommand{\NR}{\mathsf{NR}}
\newcommand{\MN}{\mathsf{MN}}
\newcommand{\huaL}{\mathcal{L}}
\newcommand{\huaR}{\mathcal{R}}
\newcommand{\frkC}{\mathfrak C}
\newcommand{\frkX}{\mathfrak X}
\newcommand{\frkY}{\mathfrak Y}
\begin{document}

\title[Review of deformation theory I]{Review of deformation theory I: Concrete formulas for deformations of algebraic structures}

\author{Ai Guan}
\address{Department of Mathematics and Statistics, Lancaster University, Lancaster LA1 4YF, UK}
\email{a.guan@lancaster.ac.uk}

\author{Andrey Lazarev}
\address{Department of Mathematics and Statistics, Lancaster University, Lancaster LA1 4YF, UK}
\email{a.lazarev@lancaster.ac.uk}

\author{Yunhe Sheng}
\address{Department of Mathematics, Jilin University, Changchun 130012, Jilin, China}
\email{shengyh@jlu.edu.cn}

\author{Rong Tang}
\address{Department of Mathematics, Jilin University, Changchun 130012, Jilin, China}
\email{tangrong16@mails.jlu.edu.cn}

\date{\today}

\begin{abstract}
In this review article, first we give the concrete formulas of representations and cohomologies of associative algebras, Lie algebras, pre-Lie algebras, Leibniz algebras and 3-Lie algebras and some of their strong homotopy analogues. Then we recall the graded Lie algebras and graded associative algebras that  characterize these algebraic structures as  Maurer-Cartan elements. The corresponding Maurer-Cartan element
equips the graded Lie or associative algebra  with a differential. Then the
deformations of the given algebraic structures are characterized as the
Maurer-Cartan elements of the resulting differential graded Lie or associative
algebras. We also recall the relation between the cohomologies and the differential graded Lie and associative algebras that control the deformations.
\end{abstract}

\subjclass[2010]{13D03,13D10,17B56,17A30}

\keywords{cohomology, deformation, Maurer-Cartan element, associative algebra, Lie algebra, pre-Lie algebra, Leibniz algebra, 3-Lie algebra}

\maketitle
\tableofcontents
\allowdisplaybreaks

\section{Introduction}\mlabel{sec:intr}

This article is the first part of a review of deformation theory of algebraic structures, which concentrates on the concrete formulas of the cohomologies and the differential graded Lie algebras that control deformations of associative algebras, Lie algebras, pre-Lie algebras, Leibniz algebras and 3-Lie algebras. In the second part, we will use a homotopical approach to the abstract deformation theory and show that any reasonable deformation theory is controlled by a differential graded Lie algebra in a precise sense.

Deformation theory started with the seminal  work of Kodaira and Spencer~\cite{KS} on deformation of holomorphic manifolds and then expanded to the realm of algebraic geometry \cite{Ha}; more recently it found striking applications in number theory ~\cite{Maz}. At the same time, the idea of a deformation became prominent in physics; e.g. quantum mechanics is now habitually interpreted as a deformation of classical mechanics and this idea of `deformation quantization' has now fed back to pure mathematics inspiring a host of exciting new developments ~\cite{K1,K2,Ri}.

The subject of \emph{algebraic} deformation theory started with foundational papers
 of Gerstenhaber~\cite{Ge0,Ge,Ge2,Ge3,Ge4} which considered associative algebras as an object to be deformed. Deformations of Lie algebras were studied by
Nijenhuis and Richardson~\cite{NR,NR2}. More recently there has been a lot of activity involving deformations of other algebraic structures of operadic origin (i.e. involving multi-linear operations such as pre-Lie algebras, Leibniz algebras, $n$-Lie algebras and others)
 \cite{Bal,Dz,Figueroa,Loday-P,NR bracket of n-Lie,Tcohomology,WBLS}. Simultaneous deformations of algebras and morphisms are studied in \cite{F-Marco-1,F-Marco-2}. Deformations of
general operadic algebras are considered in
Kontsevich-Soibelman draft book ~\cite{KSo} and in the monograph by Loday-Vallette~\cite{LV}. A categorical approach using triples and cotriples is developed in ~\cite{Fo}.

The modern approach to algebraic deformation theory (at least in characteristic zero)
dictates that a deformation problem should have a differential graded Lie algebra $\g$ `controlling it' in the sense
that  it seeks to deform a Maurer-Cartan element (typically zero) in $\g$.
At the same time, there
should be a cohomology theory so that the infinitesimal of a
formal deformation would correspond, up to a suitable equivalence relation, to a cohomology class, as well as a concomitant obstruction theory.

 Such a cohomology theory for operadic algebras is (generalized) Andr\'e-Quillen cohomology ~\cite{LV}. In the case of associative and Lie algebras it specializes to Hochschild and Chevalley-Eilenberg theory respectively.

%Nijenhuis operators also play an important role in  deformation
%theories  due to their relationship with trivial linear
%deformations. There are interesting applications of Nijenhuis operators such as constructing biHamiltonian systems to study the integrability of nonlinear evolution equations \cite{CGM,Do}.

\smallskip
Throughout this paper, we work with a coefficient field $\K$ which is  of characteristic 0.

\section{The cohomologies of algebraic structures}

In this section, we review representations and cohomologies of associative algebras, Lie algebras, pre-Lie algebras, Leibniz algebras and 3-Lie algebras respectively.

\subsection{Associative algebras}

Recall that an associative algebra  is a vector space $\g$  equipped with a multiplication $\cdot:\g\otimes \g\lon \g$ satisfying the associative law:
$$
x\cdot(y\cdot z)=(x\cdot y)\cdot z,\quad \forall x,y,z\in\g.
$$

\begin{defi}
Let $(\g,\cdot)$ be an associative algebra and $V$   a vector space. Let $\huaL,\huaR:\g\longrightarrow\gl(V)$ be two linear maps with $x\rightarrow \huaL_x$ and $x\rightarrow \huaR_x$ respectively. The triple $(V;\huaL,\huaR)$ is called a \emph{representation} of $\g$ if for all  $x,y\in \g,$ we have
$$\huaL_{x\cdot y}=\huaL_x\circ \huaL_y,\quad \huaR_{x\cdot y}=\huaR_y\circ \huaR_x,\quad \huaL_x\circ \huaR_y=\huaR_y\circ\huaL_x.$$
\end{defi}

\begin{lem}
Let $(V;\huaL,\huaR)$ be a representation of an associative algebra $(\g,\cdot)$.
Define $\huaL^*:\g\longrightarrow \gl(V^*)$ and $\huaR^*:\g\longrightarrow \gl(V^*)$   respectively  by
$$
\langle \huaL^*_x\alpha,v\rangle=\langle \alpha,\huaL_xv\rangle,\quad\langle \huaR^*_x\alpha,v\rangle=\langle \alpha,\huaR_xv\rangle,\quad \forall ~ x\in \g,\alpha\in V^*,v\in V.
$$
Then $(V^*;\huaR^*,\huaL^*)$ is a representation of $(\g,\cdot)$, which is called the \emph{dual representation} of  $(V;\huaL,\huaR)$.
\end{lem}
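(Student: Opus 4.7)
The plan is to verify directly, from the definitions, the three axioms of a representation for the triple $(V^*;\huaR^*,\huaL^*)$. In the notation of the original definition, the new ``left'' action is $\huaR^*$ and the new ``right'' action is $\huaL^*$, so the three identities to check are
\begin{align*}
\huaR^*_{x\cdot y} &= \huaR^*_x\circ \huaR^*_y,\\
\huaL^*_{x\cdot y} &= \huaL^*_y\circ \huaL^*_x,\\
\huaR^*_x\circ \huaL^*_y &= \huaL^*_y\circ \huaR^*_x,
\end{align*}
for all $x,y\in\g$.

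My strategy is the standard adjunction argument: test each identity by pairing both sides with an arbitrary $v\in V$ after applying them to an arbitrary $\alpha\in V^*$, then transfer everything to the primal side using $\langle \huaL^*_x\alpha,v\rangle=\langle\alpha,\huaL_xv\rangle$ and $\langle \huaR^*_x\alpha,v\rangle=\langle\alpha,\huaR_xv\rangle$, and finally invoke the corresponding axiom of $(V;\huaL,\huaR)$. For the first identity, $\langle\huaR^*_{x\cdot y}\alpha,v\rangle=\langle\alpha,\huaR_{x\cdot y}v\rangle=\langle\alpha,\huaR_y\huaR_xv\rangle$, which after unfolding equals $\langle\huaR^*_x\huaR^*_y\alpha,v\rangle$; note how the order reversal in the axiom $\huaR_{x\cdot y}=\huaR_y\circ\huaR_x$ is precisely what is needed for the dualized identity to read in the ``left'' convention. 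The second identity is handled symmetrically using $\huaL_{x\cdot y}=\huaL_x\circ\huaL_y$, and the third one uses the commutation $\huaL_x\circ\huaR_y=\huaR_y\circ\huaL_x$ directly.

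There is no real obstacle here; the only subtlety worth flagging is the swap of roles between $\huaL$ and $\huaR$ when dualizing, which explains the asymmetric placement of $\huaR^*$ and $\huaL^*$ in the statement. The entire proof is a three-line bookkeeping computation and requires no hypothesis on $\K$ beyond what is already in force.
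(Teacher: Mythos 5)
Your proof is correct: the three identities you isolate are exactly the representation axioms for the triple $(V^*;\huaR^*,\huaL^*)$, and the order reversal under transposition ($(A\circ B)^*=B^*\circ A^*$) accounts precisely for the swap of $\huaL$ and $\huaR$. The paper omits the proof of this lemma as routine, and your direct adjunction computation is exactly the intended argument.
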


\begin{example}
Let $(\g,\cdot)$ be an associative algebra. Let $L_x$ and $R_x$ denote the left and right multiplication operator, respectively, that is, $L_xy=x\cdot y,R_yx=x\cdot y$ for any $x,y\in \g$. Then   $(\g;L,R)$ is a representation of $(\g,\cdot)$, which is called the \emph{regular representation}. Furthermore,  $(\g^*;R^*,L^*)$ is also a representation of $(\g,\cdot)$.
\end{example}

Let $(V;\huaL,\huaR)$ be a representation of an associative algebra $(\g,\cdot)$. A linear map $f\in\Hom(\otimes^n\g,V)$ is called an $n$-cochain $(n\ge 0)$. Denote the set of $n$-cochains by $C^n_{\Ass}(\g;V)$, i.e.~$C^n_{\Ass}(\g;V)=\Hom(\otimes^n\g,V)$. The \emph{Hochschild coboundary operator}
$\dM:C^n_{\Ass}(\g;V)\longrightarrow C^{n+1}_{\Ass}(\g;V)$ is defined by
\begin{equation*}
\begin{split}
(\dM f)(x_1,\cdots,x_{n+1}) ={}& \huaL_{x_1} f(x_2,\cdots,x_{n+1})+\sum_{i=1}^n(-1)^if(x_1,\cdots,x_{i-1},x_i\cdot x_{i+1},\cdots,x_{n+1})\\
&+ (-1)^{n+1}\huaR_{ x_{n+1}}f(x_1,\cdots,x_n)
\end{split}
\end{equation*}
for all $f\in C^n_{\Ass}(\g;V)$ and $x_1,x_2,\cdots,x_{n+1}\in \g$. It is proved in~\cite{Hochschild-1,Hochschild-2} that $\dM\circ\dM=0$.

An $n$-cochain $f$ is called \emph{closed}, if $\dM f=0$. An $n$-cochain $f$ is called \emph{exact}, if $f=\dM g$ for some $g\in C^{n-1}_{\Ass}(\g;V)$. We denote the set of closed $n$-cochains by $Z^n_{\Ass}(\g;V)$ and the set of exact $n$-cochains by $B^n_{\Ass}(\g;V)$. Since $\dM\circ\dM=0$, we have $B^n_{\Ass}(\g;V)\subset Z^n_{\Ass}(\g;V)$. We denote by $H^n_{\Ass}(\g;V)=Z^n_{\Ass}(\g;V)/B^n_{\Ass}(\g;V)$, which is called the $n$-th \emph{Hochschild cohomology group} of the associative algebra $(\g,\cdot)$ with the coefficient in the representation $(V;\huaL,\huaR)$.

\subsection{Lie algebras}

A \emph{Lie algebra} is a vector space $\g$ together with a skew-symmetric bilinear map $[\cdot,\cdot]_\g:\wedge^2\g\lon\g$ such that
\begin{equation*}
[x,[y,z]_\g]_\g+[y,[z,x]_\g]_\g+[z,[x,y]_\g]_\g=0,\quad\forall x,y,z\in\g.
\end{equation*}

\begin{defi}
A \emph{representation} of a Lie algebra $(\g,[\cdot,\cdot]_{\g})$ is a pair $(V;\rho)$, where $V$ is a vector space and $\rho:\g\lon\gl(V)$ is a Lie algebra morphism, i.e.~$\rho([x,y]_\g)=[\rho(x),\rho(y)]$ for all $x,y\in\g.$ Here $[\cdot,\cdot]:\wedge^2\gl(V)\lon\gl(V)$ is the commutator Lie bracket on $\gl(V)$, the vector space of linear transformations on $V$.
\end{defi}

\begin{lem}
Let $(V;\rho)$ be a representation of a Lie algebra $(\g,[\cdot,\cdot]_{\g})$. Define $\rho^*:\g\longrightarrow \gl(V^*)$ by
$$
\langle \rho^*(x)\alpha,v\rangle=-\langle \alpha,\rho(x)v\rangle, \quad\forall ~ x\in \g,\alpha\in V^*,v\in V.
$$
Then $(V^*;\rho^*)$ is a representation of $(\g,[\cdot,\cdot]_{\g})$, which is called the \emph{dual representation} of  $(V;\rho)$.
\end{lem}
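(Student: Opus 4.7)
The plan is to verify directly that the map $\rho^*\colon \g\to \gl(V^*)$ is a Lie algebra morphism, i.e.\ that $\rho^*([x,y]_\g)=[\rho^*(x),\rho^*(y)]$ for all $x,y\in\g$. Since elements of $\gl(V^*)$ are determined by their pairings with vectors in $V$, it suffices to evaluate both sides on an arbitrary $\alpha\in V^*$ and $v\in V$ and check equality of scalars.

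First I would expand the left-hand side using the defining relation of $\rho^*$ and then use the fact that $(V;\rho)$ is already a representation, so $\rho([x,y]_\g)=[\rho(x),\rho(y)]=\rho(x)\rho(y)-\rho(y)\rho(x)$. This rewrites
$\langle \rho^*([x,y]_\g)\alpha,v\rangle$ as $-\langle\alpha,\rho(x)\rho(y)v\rangle+\langle\alpha,\rho(y)\rho(x)v\rangle$.

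Next I would expand the right-hand side $\langle [\rho^*(x),\rho^*(y)]\alpha,v\rangle$ by applying the definition of $\rho^*$ twice. The key computation is that each application of $\rho^*$ contributes a minus sign when moved across the pairing, so two applications produce an overall plus sign; concretely, $\langle \rho^*(x)\rho^*(y)\alpha,v\rangle=-\langle \rho^*(y)\alpha,\rho(x)v\rangle=\langle\alpha,\rho(y)\rho(x)v\rangle$, and symmetrically for the other term. Subtracting yields exactly the expression obtained for the left-hand side.

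There is no real obstacle here; the proof is a one-line dualization. The only point worth flagging is the role of the minus sign in the definition of $\rho^*$: without it one would obtain an anti-morphism of Lie algebras rather than a morphism, since a single sign change would survive after composing two applications of $\rho^*$. The sign is exactly calibrated so that the two sign changes cancel and the bracket relation is preserved, which is the reason the dual representation of a Lie algebra uses a sign whereas the dual representation of an associative algebra (in the preceding subsection) does not.
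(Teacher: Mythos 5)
Your verification is correct, and it is exactly the direct dualization argument that the paper leaves implicit (the lemma is stated there without proof, as a standard fact). The sign bookkeeping — two minus signs cancelling across the pairing so that $\langle\rho^*(x)\rho^*(y)\alpha,v\rangle=\langle\alpha,\rho(y)\rho(x)v\rangle$ — is right, and your remark explaining why the minus sign is needed (to avoid getting an anti-morphism) is a worthwhile observation.
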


\begin{example}
Let $(\g,[\cdot,\cdot]_{\g})$ be a Lie algebra. We define $\ad: \g \longrightarrow \gl(\g)$ by
$$\ad_xy:=[x,y]_\g, \quad \forall x,y\in \g.$$
Then $(\g;\ad)$ is a representation of $(\g,[\cdot,\cdot]_{\g})$, which is called the \emph{adjoint representation}. Furthermore, $(\g^*;\ad^*)$ is also a representation of $(\g,[\cdot,\cdot]_{\g})$.
\end{example}

Let $(V;\rho)$ be a representation of a Lie algebra $(\g,[\cdot,\cdot]_{\g})$. A linear map $f\in\Hom(\wedge^n\g,V)$ is called an $n$-cochain. Denote the set of $n$-cochains by $C^n_\Lie(\g;V)$, i.e.~$C^n_\Lie(\g;V)=\Hom(\wedge^n\g,V)$, $(n\ge 0)$.
The \emph{Chevalley-Eilenberg coboundary operator}
$\dM:C^n_\Lie(\g;V)\longrightarrow C^{n+1}_\Lie(\g;V)$ is defined by
%\yh{give the definition of a representation and corresponding cohomology operator $\dM$ here.}
\begin{equation*}
\begin{split}
(\dM f)(x_1,\cdots,x_{n+1}) ={}& \sum_{i=1}^{n+1}(-1)^{i+1}\rho(x_{i})(f(x_1,\cdots,\hat{x}_{i},\cdots,x_{n+1}))\\
&+\sum_{1\le i<j\le n+1}(-1)^{i+j}f([x_i,x_j]_\g,x_1,\cdots,\hat{x}_i,\cdots,\hat{x}_j,\cdots,x_{n+1}),
\end{split}
\end{equation*}
for all $f\in C^n_\Lie(\g;V)$ and $x_1,\cdots,x_{n+1}\in\g.$ It is proved in \cite{Hochschild-3} that $\dM\circ\dM=0$.

An $n$-cochain $f$ is called \emph{closed}, if $\dM f=0$. An $n$-cochain $f$ is called \emph{exact}, if $f=\dM g$ for some $g\in C^{n-1}_\Lie(\g;V)$. We denote the set of closed $n$-cochains by $Z^n_\Lie(\g;V)$ and the set of exact $n$-cochains by $B^n_\Lie(\g;V)$. We denote by $H^n_{\Lie}(\g;V)=Z^n_\Lie(\g;V)/B^n_\Lie(\g;V)$, which is called the $n$-th \emph{Chevalley-Eilenberg cohomology group} of the Lie algebra $(\g,[\cdot,\cdot]_{\g})$ with the coefficient in the representation $(V;\rho)$.

\subsection{pre-Lie algebras}

A \emph{pre-Lie algebra} is a pair $(\g,\cdot_\g)$, where $\g$ is a vector space and  $\cdot_\g:\g\otimes \g\longrightarrow \g$ is a bilinear multiplication
satisfying that for all $x,y,z\in \g$, the associator
$$(x,y,z):=(x\cdot_\g y)\cdot_\g z-x\cdot_\g(y\cdot_\g z)
$$
is symmetric in $x,y$, that is,
$$(x,y,z)=(y,x,z)\;\;{\rm or}\;\;{\rm
equivalently,}\;\;(x\cdot_\g y)\cdot_\g z-x\cdot_\g(y\cdot_\g z)=(y\cdot_\g x)\cdot_\g
z-y\cdot_\g(x\cdot_\g z).$$

\begin{prop}{\rm (\cite{Bai})}\label{3-pre-Lie}
Let $(\g,\cdot_\g)$ be a pre-Lie algebra. Then the commutator
\begin{equation}
[x,y]_C:=x\cdot_\g y-y\cdot_\g x,\quad\forall x,y\in\g,
\end{equation}
defines a Lie algebra $(\g,[\cdot,\cdot]_C)$, which is called the \emph{sub-adjacent Lie algebra} of $(\g,\cdot_\g)$, and denoted by $\g^c$.  Moreover, $(\g,\cdot_\g)$ is called the \emph{compatible pre-Lie algebra} structure on the Lie algebra $\g^c$.
\end{prop}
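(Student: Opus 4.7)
The plan is to verify the three defining properties of a Lie bracket for $[\cdot,\cdot]_C$: bilinearity, skew-symmetry, and the Jacobi identity. Bilinearity is inherited directly from the bilinearity of $\cdot_\g$, and skew-symmetry is transparent from the definition $[x,y]_C = x\cdot_\g y - y\cdot_\g x$. So the entire content of the proposition is the Jacobi identity, and this is where I would spend the effort.

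My approach is a direct expansion. Writing $(a,b,c) = (a\cdot_\g b)\cdot_\g c - a\cdot_\g(b\cdot_\g c)$ for the associator, the pre-Lie hypothesis reads $(a,b,c)=(b,a,c)$ for all $a,b,c\in\g$. I would expand $[x,[y,z]_C]_C$ into the four terms
\[
x\cdot_\g(y\cdot_\g z) - x\cdot_\g(z\cdot_\g y) - (y\cdot_\g z)\cdot_\g x + (z\cdot_\g y)\cdot_\g x,
\]
then sum over the three cyclic permutations of $(x,y,z)$. Grouping the resulting twelve monomials into six associator-shaped pairs, one finds that the sum reduces to
\[
-(x,y,z) - (y,z,x) - (z,x,y) + (x,z,y) + (y,x,z) + (z,y,x).
\]

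The final step is to apply the pre-Lie symmetry three times: $(x,y,z)=(y,x,z)$, $(y,z,x)=(z,y,x)$, and $(z,x,y)=(x,z,y)$. Each identity cancels one positive term against one negative term, so the whole expression is zero, establishing the Jacobi identity and hence producing the Lie algebra $\g^c$.

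I do not expect any real obstacle here; the computation is routine once one decides to expand everything and track signs carefully. The only place to be mildly cautious is matching the two halves of each associator correctly when pairing cyclic permutations (it is easy to misread $(y,z,x)$ for $(z,x,y)$, etc.), but this is bookkeeping rather than a conceptual difficulty. A more conceptual alternative — observing that the commutator of any (not necessarily associative) product whose associator is symmetric in its first two arguments is automatically Lie — just repackages the same calculation, so I would present the direct verification.
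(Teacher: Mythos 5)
Your verification is correct: the cyclic Jacobi sum expands into twelve monomials that regroup as $-(x,y,z)-(y,z,x)-(z,x,y)+(x,z,y)+(y,x,z)+(z,y,x)$, and the left-symmetry of the associator kills these in three pairs. The paper itself supplies no proof of this proposition (it is quoted from \cite{Bai} as a standard fact), and your direct expansion is exactly the standard argument, so there is nothing to add.
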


\begin{defi}
Let $(\g,\cdot_\g)$ be a pre-Lie algebra and $V$ a vector space. A
\emph{representation} of $(\g,\cdot_\g)$ is a triple $(V;\rho,\mu)$,
where $\rho:\g\longrightarrow \gl(V)$ is a representation of the
sub-adjacent Lie algebra $\g^c$ on $V$ and
$\mu:\g\longrightarrow \gl(V)$ is a linear map satisfying
\begin{equation}
\rho(x)\mu(y)v-\mu(y)\rho(x)v=\mu(x\cdot_\g y)v-\mu(y)\mu(x)v, \quad \forall~x,y\in \g,~ v\in V.
\mlabel{eq:repcond2}
\end{equation}
\end{defi}

Let $(V;\rho,\mu)$ be a representation of a pre-Lie algebra $(\g,\cdot_\g)$. For all $x\in \g,~u\in V,~\xi\in V^*$, define $\rho^*:\g\longrightarrow\gl(V^*)$ and $\mu^*:\g\longrightarrow\gl(V^*)$ by
$$\langle \rho^*(x)(\xi),u\rangle=-\langle\xi,\rho(x)(u)\rangle,\quad \langle \mu^*(x)(\xi),u\rangle=-\langle\xi,\mu(x)(u)\rangle.$$

\begin{lem}\label{dual-rep}{\rm(\cite{Bai0})}
Let $(V;\rho,\mu)$ be a representation of a pre-Lie algebra $(\g,\cdot_\g)$. Then $(V^*;\rho^*-\mu^*,-\mu^*)$ is a representation of $(\g,\cdot_\g)$, which is called the \emph{dual representation} of $(V;\rho,\mu)$.
\end{lem}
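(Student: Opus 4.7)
The plan is to verify the two defining axioms of a pre-Lie representation for the triple $(V^*;\, \rho^*-\mu^*,\, -\mu^*)$: first, that $\tilde\rho := \rho^*-\mu^*:\g\to\gl(V^*)$ is a representation of the sub-adjacent Lie algebra $\g^c$; and second, that $\tilde\rho$ together with $\tilde\mu := -\mu^*$ satisfies the compatibility condition \eqref{eq:repcond2}.

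The method throughout will be to test each desired identity on $V^*$ by pairing with an arbitrary $v\in V$ and unfolding the defining formulas $\langle\rho^*(x)\xi,v\rangle=-\langle\xi,\rho(x)v\rangle$ and $\langle\mu^*(x)\xi,v\rangle=-\langle\xi,\mu(x)v\rangle$. The key auxiliary observation is that for any two linear maps $A,B:V\to V$, the dualization reverses composition and introduces a sign, so $\langle[A^*,B^*]\xi,v\rangle=-\langle\xi,[A,B]v\rangle$. This translates every identity in $\gl(V^*)$ into an equivalent identity in $\gl(V)$, which I can then attack using the given representation axioms.

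To verify that $\tilde\rho$ is a Lie algebra morphism $\g^c\to\gl(V^*)$, I would expand $[\rho^*(x)-\mu^*(x),\,\rho^*(y)-\mu^*(y)]$ and compare to $\rho^*([x,y]_C)-\mu^*([x,y]_C)$. The purely $\rho^*$ piece contributes $\rho^*([x,y]_C)$ because $(V;\rho)$ is already a $\g^c$-representation, so after cancellation the task reduces, dually on $V$, to proving
\[
\mu([x,y]_C)\,v \;=\; [\rho(x),\mu(y)]\,v \,+\, [\mu(x),\rho(y)]\,v \,-\, [\mu(x),\mu(y)]\,v.
\]
This identity is obtained by writing \eqref{eq:repcond2} as stated and then with $x$ and $y$ swapped, and subtracting the two.

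To verify the compatibility axiom, I would substitute $\tilde\rho=\rho^*-\mu^*$ and $\tilde\mu=-\mu^*$ into \eqref{eq:repcond2}, expand (using that $(-\mu^*)(-\mu^*)=\mu^*\mu^*$), and pair with $v\in V$. After applying the dualization rule for commutators, the required equality reduces to
\[
[\rho(x),\mu(y)]\,v \,-\, [\mu(x),\mu(y)]\,v \;=\; \mu(x\cdot_\g y)\,v \,-\, \mu(x)\mu(y)\,v,
\]
which follows immediately from \eqref{eq:repcond2} by substituting $[\rho(x),\mu(y)]=\mu(x\cdot_\g y)-\mu(y)\mu(x)$ and cancelling the common term $\mu(y)\mu(x)v$. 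The only real source of friction is sign bookkeeping: each application of $(\cdot)^*$ contributes a minus sign, and these compound with the explicit minus in $\tilde\mu=-\mu^*$; once the signs are tracked carefully, the two axioms fall out of \eqref{eq:repcond2} and its $x\leftrightarrow y$ swap with no additional input.
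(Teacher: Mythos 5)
Your verification is correct: the paper states this lemma without proof (deferring to the cited reference \cite{Bai0}), and your argument is the standard direct check. Both of your reduced identities, $\mu([x,y]_C)=[\rho(x),\mu(y)]+[\mu(x),\rho(y)]-[\mu(x),\mu(y)]$ and $[\rho(x),\mu(y)]-[\mu(x),\mu(y)]=\mu(x\cdot_\g y)-\mu(x)\mu(y)$, are exactly what pairing against $v\in V$ produces, and both do follow from \eqref{eq:repcond2} together with its $x\leftrightarrow y$ swap, so the sign bookkeeping checks out.
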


\begin{example}
Define the left multiplication $L:\g\longrightarrow\gl(\g)$ and the right multiplication $R:\g\longrightarrow\gl(\g)$ by $L_xy=x\cdot_\g y$ and $R_xy=y\cdot_\g x$ respectively for all $x,y\in \g$.  Then $(\g;L,R)$ is a representation of $(\g,\cdot_\g)$, which is called the \emph{regular representation}. Furthermore, $(\g^*;L^*-R^*,-R^*)$ is also a representation of $(\g,\cdot_\g)$.
\end{example}

The cohomology complex for a pre-Lie algebra $(\g,\cdot_\g)$ with the coefficient in a representation $(V;\rho,\mu)$ is given as follows \cite{Dz}.
The set of $n$-cochains  $C^n_\preLie(\g;V)$ is given by
$\Hom(\wedge^{n-1}\g\otimes \g,V),\
n\geq 1.$  The coboundary operator $\dM:C^n_\preLie(\g;V)\longrightarrow C^{n+1}_\preLie(\g;V)$ is given by
\begin{equation}
\begin{split}
(\dM f)(x_1, \cdots,x_{n+1})
={}& \sum_{i=1}^{n}(-1)^{i+1}\rho(x_i)f(x_1, \cdots,\hat{x}_i,\cdots,x_{n+1})\\
&+ \sum_{i=1}^{n}(-1)^{i+1}\mu(x_{n+1})f(x_1, \cdots,\hat{x}_i,\cdots,x_n,x_i)\\
&- \sum_{i=1}^{n}(-1)^{i+1}f(x_1, \cdots,\hat{x}_i,\cdots,x_n,x_i\cdot_\g x_{n+1})\\
&+ \sum_{1\leq i<j\leq n}(-1)^{i+j}f([x_i,x_j]_C,x_1,\cdots,\hat{x}_i,\cdots,\hat{x}_j,\cdots,x_{n+1}),
\mlabel{eq:pLcoh}
\end{split}
\end{equation}
for all $x_1,x_2,\cdots,x_{n+1}\in \g$. It is proved in \cite{Dz} that $\dM\circ\dM=0$. The resulting cohomology is denoted by $H^*_{\preLie}(\g;V)$.

\begin{rmk}
Let $(V;\rho,\mu)$ be a representation of
a pre-Lie algebra $(\g,\cdot_\g)$. Then there is a representation of the sub-adjacent Lie algebra $(\g,[\cdot,\cdot]_{C})$ on the vector space $\Hom(\g,V)$ given by
\begin{equation}\label{eq:2.2}
\hat{\rho}(x)(f)(y)=\rho(x)f(y)+\mu(y)f(x)-f(x\cdot_\g y),\quad \forall f\in \Hom(\g,V),~ x,y\in \g.
\end{equation}
Moreover, we define $\Phi:\Hom(\wedge^{n-1}\g,\Hom(\g,V))\lon\Hom(\wedge^{n-1}\g\otimes\g,V)$ by
$$
\Phi(f)(x_1,\cdots,x_{n-1},y)=f(x_1,\cdots,x_{n-1})(y),\quad \forall x_1,\cdots,x_{n-1},y\in\g.
$$
Then $\Phi$ is a cochain map from the cochain complex $(\oplus_nC^n_\Lie(\g^c;\Hom(\g,V)),\dM)$ to the cochain complex $(\oplus_nC^n_\preLie(\g;V),\dM)$, i.e.~we have the following commutative diagram:
$$\xymatrix{
\Hom(\wedge^{n-1} \g^c, \Hom(\g,V))\ar[d]_{\dM}  \ar[r]^{\quad
\Phi} & \Hom(\wedge^{n-1} \g\otimes \g,V) \ar[d]^{\dM}  \\
\Hom(\wedge^n \g^c, \Hom(\g,V))\ar[r]^{\quad
\Phi}   &\Hom(\wedge^n \g\otimes \g,V).              }$$
Thus $\Phi$ induces an isomorphism between the corresponding cohomologies:
\begin{equation}
H^n_{\preLie}(\g;V)\cong H^{n-1}_{\Lie}\big(\g^c;\Hom(\g,V)\big),~\forall n=1,2,\cdots.
\end{equation}
See \cite{Bu,Dz} for more details.
\end{rmk}

\subsection{Leibniz algebras}

A \emph{Leibniz algebra} is a vector space $\g$ together with a bilinear operation $[\cdot,\cdot]_\g:\g\otimes\g\lon\g$ such that
\begin{equation*}
\label{Leibniz}
[x,[y,z]_\g]_\g=[[x,y]_\g,z]_\g+[y,[x,z]_\g]_\g,\quad\forall x,y,z\in\g.
\end{equation*}

A \emph{representation} of a Leibniz algebra $(\g,[\cdot,\cdot]_{\g})$ is a triple $(V;\rho^L,\rho^R)$, where $V$ is a vector space, $\rho^L,\rho^R:\g\lon\gl(V)$ are linear maps such that the following equalities hold for all $x,y\in\g$,
\begin{align}
\label{rep-1}\rho^L([x,y]_{\g}) &= [\rho^L(x),\rho^L(y)],\\
\label{rep-2}\rho^R([x,y]_{\g}) &= [\rho^L(x),\rho^R(y)],\\
\label{rep-3}\rho^R(y)\circ \rho^L(x) &= -\rho^R(y)\circ \rho^R(x).
\end{align}

Let $(V;\rho^L,\rho^R)$ be a representation of a Leibniz algebra $(\g,[\cdot,\cdot]_{\g})$. Define $(\rho^L)^*:\g\longrightarrow \gl(V^*)$ and $(\rho^R)^*:\g\longrightarrow \gl(V^*)$ respectively by
$$
\langle (\rho^L)^*(x)\alpha,v\rangle=-\langle \alpha,\rho^L(x)v\rangle,\quad\langle (\rho^R)^*(x)\alpha,v\rangle=-\langle \alpha,\rho^R(x)v\rangle,\quad \forall ~ x\in \g,\alpha\in V^*,v\in V.
$$

\begin{lem}\label{lem:dualrep}{\rm(\cite{Sheng-Tang})}
Let\/ $(V;\rho^L,\rho^R)$ be a representation of a Leibniz algebra\/ $(\g,[\cdot,\cdot]_{\g})$. Then
$$\big(V^*;(\rho^L)^*,-(\rho^L)^*-(\rho^R)^*\big)$$
is a representation of\/ $(\g,[\cdot,\cdot]_{\g})$, which is called the \emph{dual representation} of\/ $(V;\rho^L,\rho^R)$.
\end{lem}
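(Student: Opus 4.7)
The plan is to verify directly the three defining conditions \eqref{rep-1}, \eqref{rep-2} and \eqref{rep-3} for the candidate triple $(V^*; \tilde{\rho}^L, \tilde{\rho}^R)$, writing $\tilde{\rho}^L := (\rho^L)^*$ and $\tilde{\rho}^R := -(\rho^L)^* - (\rho^R)^*$ for brevity. The main bookkeeping tool I would record up front is the universal identity $[A^*, B^*] = [A, B]^*$ in $\gl(V^*)$, valid under the sign convention $\langle A^*\alpha, v\rangle = -\langle \alpha, Av\rangle$. This is a one-line check: $\langle A^*B^*\alpha, v\rangle = \langle \alpha, BAv\rangle$ and symmetrically for $B^*A^*$, so $\langle [A^*, B^*]\alpha, v\rangle = -\langle \alpha, [A,B]v\rangle = \langle [A,B]^*\alpha, v\rangle$.

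Granted this, condition \eqref{rep-1} for the dual, namely $(\rho^L)^*([x,y]_\g) = [(\rho^L)^*(x), (\rho^L)^*(y)]$, is nothing but the dualization of \eqref{rep-1} for the original triple. For condition \eqref{rep-2} I would expand
\[
[\tilde{\rho}^L(x), \tilde{\rho}^R(y)] \,=\, -[(\rho^L)^*(x), (\rho^L)^*(y)] \,-\, [(\rho^L)^*(x), (\rho^R)^*(y)]
\]
by bilinearity, apply the universal identity to each bracket, and then invoke \eqref{rep-1} and \eqref{rep-2} for $(V; \rho^L, \rho^R)$ to rewrite this as $-(\rho^L)^*([x,y]_\g) - (\rho^R)^*([x,y]_\g)$, which is precisely $\tilde{\rho}^R([x,y]_\g)$.

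Condition \eqref{rep-3} is slightly trickier but still short. The key simplification is $\tilde{\rho}^L(x) + \tilde{\rho}^R(x) = -(\rho^R)^*(x)$, which reduces the desired identity $\tilde{\rho}^R(y)\circ \tilde{\rho}^L(x) = -\tilde{\rho}^R(y)\circ \tilde{\rho}^R(x)$ to the cleaner statement $\tilde{\rho}^R(y)\circ (\rho^R)^*(x) = 0$. Expanding $\tilde{\rho}^R(y)$ and pairing against $\alpha \in V^*$, $v \in V$ yields (up to sign) $\langle \alpha, \rho^R(x)(\rho^L(y) + \rho^R(y))v\rangle$, which vanishes by \eqref{rep-3} with the roles of $x$ and $y$ exchanged, in the form $\rho^R(x)\rho^L(y) + \rho^R(x)\rho^R(y) = 0$.

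The only genuine obstacle is sign bookkeeping: under the convention $\langle A^*\alpha, v\rangle = -\langle \alpha, A v\rangle$ the composite $A^*B^*$ sends $\alpha$ to the functional $v \mapsto \langle \alpha, BAv\rangle$ rather than $-\langle \alpha, ABv\rangle$, so dualization is not simply order-reversing composition. Once the universal identity $[A^*, B^*] = [A,B]^*$ is in hand, all three verifications reduce to short algebraic manipulations using only \eqref{rep-1}--\eqref{rep-3} for $(V; \rho^L, \rho^R)$, with no additional Leibniz identity required.
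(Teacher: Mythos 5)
Your verification is correct: the universal identity $[A^*,B^*]=[A,B]^*$ (which holds because $A^*=-A^t$ and transposition reverses order twice in a commutator) does reduce \eqref{rep-1} and \eqref{rep-2} for the dual triple to the corresponding axioms for $(V;\rho^L,\rho^R)$, and your reduction of \eqref{rep-3} to $\tilde{\rho}^R(y)\circ(\rho^R)^*(x)=0$, settled by $\rho^R(x)\circ\bigl(\rho^L(y)+\rho^R(y)\bigr)=0$, is exactly right, including the observation that dualization reverses the order of composition so that the axiom must be invoked with $x$ and $y$ exchanged. The paper itself gives no proof of this lemma, deferring to the cited reference, so there is nothing to compare against; your direct check of the three axioms is the standard argument and is complete.
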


\begin{example}
Define the left multiplication $L:\g\longrightarrow\gl(\g)$ and the right multiplication $R:\g\longrightarrow\gl(\g)$ by $L_xy=[x,y]_\g$ and $R_xy=[y,x]_\g$ respectively for all $x,y\in \g$.  Then $(\g;L,R)$ is a representation of $(\g,[\cdot,\cdot]_{\g})$, which is called the \emph{regular representation}. Furthermore, $(\g^*;L^*,-L^*-R^*)$ is also a representation of $(\g,[\cdot,\cdot]_{\g})$. Please see \cite{Sheng-Tang} for more details about dual representations of Leibniz algebras and Leibniz bialgebras.
\end{example}

Let $(V;\rho^L,\rho^R)$ be a representation of a Leibniz algebra $(\g,[\cdot,\cdot]_{\g})$.
The set of $n$-cochains is given by  $C^n_\Leib(\g;V)=
\Hom(\otimes^n\g,V),~(n\ge 0)$.  The coboundary operator
$\dM:C^n_\Leib(\g;V)\longrightarrow C^
{n+1}_\Leib(\g;V)$
is defined by
\begin{equation*}
\begin{split}
(\dM f)(x_1,\cdots,x_{n+1}) ={}& \sum_{i=1}^{n}(-1)^{i+1}\rho^L(x_i)f(x_1,\cdots,\hat{x}_i,\cdots,x_{n+1})+(-1)^{n+1}\rho^R(x_{n+1})f(x_1,\cdots,x_{n})\\
&+ \sum_{1\le i<j\le n+1}(-1)^if(x_1,\cdots,\hat{x}_i,\cdots,x_{j-1},[x_i,x_j]_\g,x_{j+1},\cdots,x_{n+1}),
\end{split}
\end{equation*}
for all $x_1,\cdots, x_{n+1}\in\g$. It was proved in \cite{Loday-P} that $\dM\circ\dM=0$. The resulting cohomology is denoted by $H^*_{\Leib}(\g;V)$.

%The regular representation $(\g;L,R)$ is very important. We denote the corresponding cochain complex by $(C^*(\g,\g),\partial^{reg})$ and the resulting cohomology   by $H^*_{\rm reg}(\g)$.

\subsection{$3$-Lie algebras}

\begin{defi}{\rm(\cite{Filippov})}
A \emph{3-Lie algebra}  is a vector space $\g$ together with a skew-symmetric linear map  $[\cdot,\cdot,\cdot]_\g:
\otimes^3 \g\rightarrow \g$ such that the following \emph{Fundamental Identity} holds:
\begin{equation}\label{eq:de1}
\begin{split}
\sfF_{x_1,x_2,x_3,x_4,x_5}
\triangleq{}& [x_1,x_2,[x_3,x_4,x_5]_\g]_\g-[[x_1,x_2,x_3]_\g,x_4,x_5]_\g-[x_3,[x_1,x_2,x_4]_\g,x_5]_\g\\
&- [x_3,x_4,[x_1,x_2,x_5]_\g]_\g\\
={}& 0.
\end{split}
\end{equation}
\end{defi}

Elements in $\wedge^2\g$ are called \emph{fundamental objects} of the $3$-Lie algebra $(\g,[\cdot,\cdot,\cdot]_\g)$. There is a bilinear operation $[\cdot,\cdot]_{\sfF}$ on $  \wedge^{2}\g$, which is given by
\begin{equation}\label{eq:bracketfunda}
[\frkX,\frkY]_{\sfF}=[x_1,x_2,y_1]_\g\wedge y_2+y_1\wedge[x_1,x_2,y_2]_\g,\quad
\forall \frkX=x_1\wedge x_2,~\frkY=y_1\wedge y_2.
\end{equation}
It is well-known that $(\wedge^2\g,[\cdot,\cdot]_{\sfF})$ is a Leibniz algebra \cite{DT}, which plays  an important role in the theory of 3-Lie algebras.

\begin{defi}{\rm (\cite{Kasymov})}\label{defi:usualrep}
A \emph{representation} $\rho$ of a $3$-Lie algebra $(\g,[\cdot,\cdot,\cdot]_\g)$ on a vector space  $V$ is a linear map $\rho:\wedge^2\g\longrightarrow \End(V)$, such that
\begin{align*}
\rho(x_1,x_2)\rho(x_3,x_4) &= \rho([x_1,x_2,x_3]_\g,x_4)+\rho(x_3,[x_1,x_2,x_4]_\g)+\rho(x_3,x_4)\rho(x_1,x_2),\\
\rho(x_1,[x_2,x_3,x_4]_\g) &= \rho(x_3,x_4)\rho(x_1,x_2)-\rho(x_2,x_4)\rho(x_1,x_3)+\rho(x_2,x_3)\rho(x_1,x_4).
\end{align*}
\end{defi}

\emptycomment{
It is straightforward to obtain
\begin{lem}\label{lem:semidirectp}
Let $(\g,[\cdot,\cdot,\cdot]_\g)$ be a $3$-Lie algebra, $V$  a vector space and $\rho:
\wedge^2\g\rightarrow \gl(V)$  a skew-symmetric linear
map. Then $(V;\rho)$ is a representation of $\g$ if and only if there
is a $3$-Lie algebra structure $($called the semidirect product$)$
on the direct sum of vector spaces  $\g\oplus V$, defined by
\begin{equation}\label{eq:sum}
[x_1+v_1,x_2+v_2,x_3+v_3]_{\rho}=[x_1,x_2,x_3]_\g+\rho(x_1,x_2)v_3+\rho(x_2,x_3)v_1+\rho(x_3,x_1)v_2,
\end{equation}
for $x_i\in \g, v_i\in V, 1\leq i\leq 3$. We denote this semidirect product $3$-Lie algebra by $\g\ltimes_\rho V.$
\end{lem}
}

Let $A$ be a vector space. For a linear map $\phi:A\otimes A\lon\gl(V)$, we define a linear map $\phi^*: A\otimes A\lon\gl(V^*)$ by
\begin{equation*}
\langle \phi^*(x,y)\alpha,v\rangle=-\langle\alpha, \phi(x,y)v\rangle,\quad\forall \alpha\in V^*,x,y\in\g,v\in V.
\end{equation*}

\begin{lem}\label{dual-rep-3-Lie}
Let $(V;\rho)$ be a representation of a $3$-Lie algebra $(\g,[\cdot,\cdot,\cdot]_\g)$. Then $(V^*;\rho^*)$ is a representation of the $3$-Lie algebra $(\g,[\cdot,\cdot,\cdot]_\g)$, which is called the \emph{dual representation}.
\end{lem}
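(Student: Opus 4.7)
The plan is to verify that $\rho^*$ satisfies both axioms of Definition \ref{defi:usualrep} by direct dualization. Skew-symmetry of $\rho^*$ in its two $\g$-arguments is immediate from the defining pairing $\langle \rho^*(x,y)\alpha,v\rangle=-\langle\alpha,\rho(x,y)v\rangle$ together with the skew-symmetry of $\rho$.

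For the first axiom, I would pair both sides of the desired equality for $\rho^*$ against an arbitrary $v\in V$. Because dualization reverses the order of composed operators,
\[
\langle\rho^*(x_1,x_2)\rho^*(x_3,x_4)\alpha,v\rangle=\langle\alpha,\rho(x_3,x_4)\rho(x_1,x_2)v\rangle,
\]
and analogously for the other composite. Collecting signs, the identity to be proved reduces, after rearrangement, to the first axiom of Definition \ref{defi:usualrep} for $\rho$ itself, which holds by hypothesis.

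The second axiom is more delicate. The same pairing trick reduces it to the operator identity
\[
\rho(x_1,x_2)\rho(x_3,x_4)-\rho(x_1,x_3)\rho(x_2,x_4)+\rho(x_1,x_4)\rho(x_2,x_3)+\rho(x_1,[x_2,x_3,x_4]_\g)=0, \quad (\ast)
\]
which is not one of the hypotheses. My plan for $(\ast)$ has two ingredients. First, I would apply axiom 1 three times to commute the factors in $\rho(x_1,x_2)\rho(x_3,x_4)$, $\rho(x_1,x_3)\rho(x_2,x_4)$ and $\rho(x_1,x_4)\rho(x_2,x_3)$ into the ``axiom-2 order'' $\rho(x_i,x_j)\rho(x_1,x_k)$; the resulting ``commuted'' parts assemble, via axiom 2, into $\rho(x_1,[x_2,x_3,x_4]_\g)$. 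Second, I would derive the auxiliary identity
\[
\rho([x_1,x_2,x_3]_\g,x_4)+\rho(x_3,[x_1,x_2,x_4]_\g)-\rho(x_2,[x_1,x_3,x_4]_\g)+\rho(x_1,[x_2,x_3,x_4]_\g)=0
\]
by computing $[\rho(x_1,x_2),\rho(x_3,x_4)]$ via axiom 1 with both labelings $(x_1,x_2,x_3,x_4)$ and $(x_3,x_4,x_1,x_2)$ and equating them, using the skew-symmetry of the 3-bracket (so that cyclic triples like $[x_3,x_4,x_1]_\g$ collapse to $[x_1,x_3,x_4]_\g$) and of $\rho$. The leftover bracket-terms from the first step are exactly twice the first three summands of this auxiliary identity, so substituting the identity cancels them against $\rho(x_1,[x_2,x_3,x_4]_\g)$ and yields $(\ast)$.

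The main obstacle is precisely this second axiom. Because dualization reverses operator order, axiom 2 for $\rho^*$ does not follow verbatim from axiom 2 for $\rho$; one has to rearrange using axiom 1, which produces bracket-terms whose cancellation requires the auxiliary fundamental-identity-like consequence of axiom 1 described above. Everything else --- the skew-symmetry and the first axiom --- is essentially cosmetic dualization.
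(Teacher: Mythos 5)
The paper states this lemma without any proof, so there is nothing to compare your argument against; judged on its own, your proposal is correct and complete in all essentials. The reduction of the two axioms for $\rho^*$ to statements about $\rho$ via $\rho^*(x,y)=-\rho(x,y)^{\mathrm t}$ and order-reversal of transposes is right: the first axiom for $\rho^*$ transposes back to exactly the first axiom for $\rho$, while the second transposes to your identity $(\ast)$, which is genuinely not among the hypotheses and is the real content of the lemma. Your derivation of $(\ast)$ also checks out: commuting each product $\rho(x_1,x_j)\rho(x_k,x_l)$ into axiom-2 order via axiom 1 produces, after using skew-symmetry of the $3$-bracket and of $\rho$, exactly $\rho(x_1,[x_2,x_3,x_4]_\g)$ plus twice the sum $\rho([x_1,x_2,x_3]_\g,x_4)+\rho(x_3,[x_1,x_2,x_4]_\g)-\rho(x_2,[x_1,x_3,x_4]_\g)$; and your auxiliary identity, obtained by writing the commutator $[\rho(x_1,x_2),\rho(x_3,x_4)]$ via axiom 1 in both orders and adding (using that cyclic permutations of the three slots are even), is precisely what is needed to collapse this to zero. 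The one point worth making explicit when you write this up is the evenness of the cyclic permutations $[x_3,x_4,x_1]_\g=[x_1,x_3,x_4]_\g$ and $[x_3,x_4,x_2]_\g=[x_2,x_3,x_4]_\g$, since the whole cancellation hinges on those signs.
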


\begin{example}
Let $(\g,[\cdot,\cdot,\cdot]_\g)$ be a $3$-Lie algebra. The linear map $\ad:\wedge^{2}\g\longrightarrow\gl(\g)$ defined by
$$
\ad_{x,y}z=[x,y,z]_\g,\quad \forall x,y,z\in\g
$$
is a representation  of the $3$-Lie algebra $(\g,[\cdot,\cdot,\cdot]_\g)$, which we call the \emph{adjoint representation}. Furthermore, $(\g^*;\ad^*)$ is also a representation of $(\g,[\cdot,\cdot,\cdot]_\g)$.
\end{example}

An $n$-cochain on a 3-Lie algebra $(\g,[\cdot,\cdot,\cdot]_\g)$ with the coefficient in a representation $(V;\rho)$ is a linear map
$$f:\underbrace{\wedge^2\g \otimes {\dotsb} \otimes \wedge^2\g}_{n-1}\wedge\g \longrightarrow V.$$
Denote the space of $n$-cochains by $C^{n}_{3\mbox{-}\Lie}(\g;V),~(n\ge 1).$ The coboundary operator $\dM:C^{n}_{3\mbox{-}\Lie}(\g;V)\longrightarrow C^{n+1}_{3\mbox{-}\Lie}(\g;V)$ is given by
\begin{equation}\label{eq:drho}
\begin{split}
&(\dM f)(\frkX_1,\cdots ,\frkX_n,z)\\
={}& \sum_{1\leq j<k\leq n}(-1)^jf(\frkX_1,\cdots ,\hat{\frkX}_j,\cdots ,\frkX_{k-1},[\frkX_j,\frkX_k]_{\sfF},\frkX_{k+1},\cdots ,\frkX_{n},z)\\
&+ \sum_{j=1}^n(-1)^jf(\frkX_1,\cdots ,\hat{\frkX}_j,\cdots ,\frkX_{n},[\frkX_j,z]_\g)\\
&+ \sum_{j=1}^n(-1)^{j+1}\rho(\frkX_j)f(\frkX_1,\cdots ,\hat{\frkX}_j,\cdots ,\frkX_{n},z)\\
&+ (-1)^{n+1}\big(\rho(y_{n},z)f(\frkX_1,\cdots ,\frkX_{n-1},x_{n} ) +\rho(z,x_{n})f(\frkX_1,\cdots ,\frkX_{n-1},y_{n} ) \big),
\end{split}
\end{equation}
for all $\frkX_i=x_i\wedge y_i\in\wedge^2\g,~i=1,2,\cdots,n$ and $z\in\g.$ It was proved in \cite{Casas-Loday,Tcohomology} that $\dM\circ\dM=0$.

An element $f\in {C}^{n}_{3\mbox{-}\Lie}(\g;V)$ is called an $n$-cocycle if $\dM f=0$. It is called an $n$-coboundary if there exists some $g\in {C}^{n-1}_{3\mbox{-}\Lie}(\g;V)$ such that $f=\dM g$. Denote by $Z^n_{3\mbox{-}\Lie}(\g;V)$ and $B^n_{3\mbox{-}\Lie}(\g;V)$ the set of $n$-cocycles and the set of $n$-coboundaries respectively. Then we obtain the $n$-th cohomology group \begin{equation}\label{eq:cohomology}
H^n_{3\mbox{-}\Lie}(\g;V)=Z^n_{3\mbox{-}\Lie}(\g;V)/B^n_{3\mbox{-}\Lie}(\g;V).
\end{equation}

\begin{rmk}
The cohomology theory of 3-Lie algebras is closely related to the cohomology theory of Leibniz algebras. More precisely, let $(V;\rho)$ be a representation of
a $3$-Lie algebra $(\g,[\cdot,\cdot,\cdot]_\g)$. Then there is a representation of the Leibniz algebra $(\wedge^2\g,[\cdot,\cdot]_{\sfF})$ on the vector space $\Hom(\g,V)$ given by
\begin{align*}
\rho^L(x\wedge y)(\phi)z &= \rho(x,y)\phi(z)-\phi([x,y,z]_\g),\\
\rho^R(x\wedge y)(\phi)z &= \phi([x,y,z]_\g)-\rho(x,y)\phi(z)-\rho(y,z)\phi(x)-\rho(z,x)\phi(y),
\end{align*}
for all $\phi\in\Hom(\g,V), x,y,z\in \g$. Moreover, we define $\Phi:\Hom(\underbrace{\wedge^2\g \otimes {\dotsb} \otimes \wedge^2\g}_{n-1}\wedge\g, V)\lon\Hom(\otimes^{n-1}\wedge^2\g,\Hom(\g,V))$ by
$$
\Phi(f)(\frkX_1,\cdots ,\frkX_{n-1})x_n=f(\frkX_1,\cdots ,\frkX_{n-1},x_n),\quad\forall \frkX_1,\cdots ,\frkX_{n-1}\in\wedge^2\g,~x_n\in\g.
$$
Then $\Phi$ is a cochain map from the cochain complex $(C^n_{3\mbox{-}\Lie}(\g;V),\dM)$ to the cochain complex $(C^{n-1}_\Leib(\wedge^2\g;\Hom(\g,V)),\dM)$, i.e.~we have the following commutative diagram:
$$\xymatrix{
C^n_{3\mbox{-}\Lie}(\g;V)\ar[d]_{\dM}  \ar[r]^{\Phi\qquad} & C^{n-1}_\Leib(\wedge^2\g;\Hom(\g,V)) \ar[d]^{\dM}  \\
C^{n+1}_{3\mbox{-}\Lie}(\g;V)\ar[r]^{\Phi\qquad}   &C^{n}_\Leib(\wedge^2\g;\Hom(\g,V)).              }$$
Thus, it induces an isomorphism between cohomologies:
\begin{equation}
H^n_{3\mbox{-}\Lie}(\g;V)\cong H^{n-1}_{\Leib}\big(\wedge^2\g;\Hom(\g,V)\big),\quad\forall n=1,2,\cdots.
\end{equation}
See \cite{Casas-Loday,Figueroa} for more details about cohomology theory of 3-Lie algebras.
\end{rmk}

\section{The dgLa controlling deformations of algebraic structures}

In the section, we give the differential graded Lie algebras (dgLa for short) that control deformations of associative algebras, Lie algebras, pre-Lie algebras, Leibniz algebras and $3$-Lie algebras respectively.

Usually for an algebraic structure, Maurer-Cartan elements in a
suitable graded Lie algebra (gLa for short) are used to characterize realizations
of the algebraic structure on a space. For a given realization of
the algebraic structure, the corresponding Maurer-Cartan element
equips the graded Lie algebra with a differential. Then the
deformations of the given realization are characterized as the
Maurer-Cartan elements of the resulting differential graded Lie
algebra. Sometimes this differential graded Lie algebra is a commutator Lie algebra of a certain
\emph{differential graded associative algebra} resulting in an associative version of deformation theory.

We first recall a general notion and a basic fact~\cite{LV}.
\begin{defi}\label{def:mc}
Let $(\g=\oplus_{k\in\mathbb Z}\g^k,[\cdot,\cdot],d)$ be a differential graded Lie algebra.
A degree $1$ element $x\in\g^1$ is called a \emph{Maurer-Cartan element} of $\g$ if it satisfies the \emph{Maurer-Cartan equation}:
\begin{equation}
dx+\half[x,x]=0.
\label{eq:mce}
\end{equation}
%The set of MC elements in $\g$ will be denoted by $\MC(\g)$.
\end{defi}
A graded Lie algebra is a differential graded Lie algebra with $d=0$.
One can similarly define the notion of a Maurer-Cartan element in a differential graded associative algebra.

\begin{defi}
Let $(\g=\oplus_{k\in\mathbb Z}\g^k,\cdot,d)$ be a differential graded associative algebra.
A degree $1$ element $x\in\g^1$ is called a \emph{Maurer-Cartan element} of $\g$ if it satisfies the \emph{Maurer-Cartan equation}:
\begin{equation}
dx+x\cdot x=0.
\end{equation}
%The set of MC elements in $\g$ will be denoted by $\MC(\g)$.
\end{defi}

A graded associative algebra is a differential graded associative algebra with $d=0$. Then we have

\begin{thm}
Let $(\g=\oplus_{k\in\mathbb Z}\g^k,[\cdot,\cdot])$ be a graded Lie algebra and let $\mu\in \g^1$ be a Maurer-Cartan element. Then the map
$$ d_\mu: \g \longrightarrow \g, \ d_\mu(u):=[\mu, u], \quad \forall u\in \g,$$
is a differential on $\g$. For any $v\in \g^1$, the sum $\mu+v$ is a
Maurer-Cartan element of the graded Lie algebra $(\g,
[\cdot,\cdot])$ if and only if $v$ is a Maurer-Cartan element of the differential graded Lie algebra $(\g, [\cdot,\cdot], d_\mu)$. \label{pp:mce}
\end{thm}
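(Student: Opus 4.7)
The plan is to verify both assertions by direct computation using the graded Jacobi identity, graded antisymmetry and the hypothesis $[\mu,\mu]=0$, which (since $d=0$ on the gLa) is exactly the Maurer-Cartan equation for $\mu$.

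First I would show that $d_\mu$ is a differential on $\g$. Its degree is $1$ because $|\mu|=1$ and the bracket has degree $0$, and it is a graded derivation of the bracket by a one-line application of graded Jacobi to $[\mu,[u,v]]$. To prove $d_\mu^2=0$, I would apply graded Jacobi to $[\mu,[\mu,u]]$: because $|\mu|=1$ is odd, the two copies of $[\mu,[\mu,u]]$ appear with opposite signs and collapse into $2[\mu,[\mu,u]] = [[\mu,\mu],u]$, which vanishes by the Maurer-Cartan hypothesis on $\mu$. Here I use that $\operatorname{char}\K=0$ in order to divide by $2$.

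For the second assertion I would expand $\tfrac{1}{2}[\mu+v,\mu+v]$ by bilinearity. Because both $\mu$ and $v$ lie in degree $1$, graded antisymmetry yields $[v,\mu]=[\mu,v]$, so the two cross terms combine into $2[\mu,v]$ and the prefactor $\tfrac{1}{2}$ converts them into a single $[\mu,v]=d_\mu v$. Using $[\mu,\mu]=0$, the Maurer-Cartan equation for $\mu+v$ in $(\g,[\cdot,\cdot])$ reduces precisely to $d_\mu v + \tfrac{1}{2}[v,v]=0$, which is the Maurer-Cartan equation for $v$ in the dgLa $(\g,[\cdot,\cdot],d_\mu)$, giving the equivalence in both directions.

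I do not anticipate any genuine obstacle: the statement is essentially a formal consequence of graded Jacobi plus the quadratic Maurer-Cartan identity. The only point requiring care is bookkeeping of signs arising from graded Jacobi and graded antisymmetry in odd degrees, together with the implicit use of the standing characteristic-zero hypothesis.
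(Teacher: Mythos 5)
Your proof is correct and complete; the paper itself states this theorem without proof, recalling it as a basic fact from the literature, so there is nothing to diverge from. Your computation --- graded Jacobi applied to $[\mu,[\mu,u]]$ together with $[\mu,\mu]=0$ to get $d_\mu^2=0$, and the expansion of $\frac{1}{2}[\mu+v,\mu+v]$ using $[v,\mu]=[\mu,v]$ for odd-degree elements to reduce the Maurer--Cartan equation for $\mu+v$ to $d_\mu v+\frac{1}{2}[v,v]=0$ --- is exactly the standard argument one would supply.
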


A permutation $\sigma\in\mathbb S_n$ is called an $(i,n-i)$-shuffle if $\sigma(1)<\dotsb<\sigma(i)$ and $\sigma(i+1)<\dotsb <\sigma(n)$. If $i=0$ or $n$, we assume $\sigma=\id$. The set of all $(i,n-i)$-shuffles will be denoted by $\mathbb S_{(i,n-i)}$. The notion of an $(i_1,\cdots,i_k)$-shuffle and the set $\mathbb S_{(i_1,\cdots,i_k)}$ are defined analogously.

We also recall the notion of the suspension and desuspension operators. Let $V=\oplus_{i\in\mathbb Z} V^i$ be a graded vector space, we define the {\bf suspension operator} $\Sigma:V\mapsto \Sigma V$ by assigning  $V$ to the graded vector space $\Sigma V=\oplus_{i\in\mathbb Z}(\Sigma V)^i$ with $(\Sigma V)^i:=V^{i+1}$.
There is a natural degree $-1$ map $\Sigma:V\lon \Sigma V$ that is the identity map of the underlying vector space, sending $v\in V$ to its suspended copy $\Sigma v\in \Sigma V$. Likewise, the {\bf desuspension operator} $\Sigma^{-1}$ changes the grading of $V$ according to the rule $(\Sigma^{-1}V)^i:=V^{i-1}$. The  degree $1$ map $\Sigma^{-1}:V\lon \Sigma^{-1}V$ is defined in the obvious way.

\subsection{Associative algebras}

Let $\g$ be a vector space. Denote by $\frkC_\Ass^p(\g,\g)=\Hom(\otimes^{p+1}\g,\g)$ and set $\frkC^*_\Ass(\g,\g)=\oplus_{p\in\mathbb N}\frkC^p_\Ass(\g,\g).$ We assume that the degree of an element in $\frkC^p_\Ass(\g,\g)$ is $p$. Then we have
\begin{thm}{\rm(\cite{Ge0,Ge})}\label{gla-associative}
The graded vector space $\frkC^*_\Ass(\g,\g)$ equipped with the graded commutator bracket
\begin{equation}
[P,Q]_\G=P\circ Q-(-1)^{pq}Q\circ P,\quad \forall P\in \frkC^p_\Ass(\g,\g),~Q\in \frkC^q_\Ass(\g,\g),~ p,q\geq0,
\end{equation}
is a graded Lie algebra, where $P\circ Q\in \frkC_\Ass^{p+q}(\g,\g)$ is defined by
\begin{equation*}
(P\circ Q)(x_1,\cdots,x_{p+q+1}):=\sum_{i=1}^{p+1}(-1)^{(i-1)q}P(x_1,\cdots,x_{i-1},Q(x_{i},\cdots,x_{i+q}),x_{i+q+1},\cdots,x_{p+q+1}),
\end{equation*}
for all $x_1,\cdots,x_{p+q+1}\in \g$.

Furthermore, $(\g,\pi)$  is an associative algebra, where $\pi\in\Hom(\otimes^2\g,\g)$, if and only if $\pi$ is a Maurer-Cartan element of the graded Lie algebra $(\frkC^*_\Ass(\g,\g),[\cdot,\cdot]_\G)$, i.e.
$$[\pi,\pi]_\G=0.$$
\end{thm}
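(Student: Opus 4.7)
The proof has two parts: establishing the graded Lie algebra structure, and the Maurer-Cartan characterization.

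For the first part, my plan is to first verify graded antisymmetry, which is immediate from the definition: if $P\in \frkC^p_\Ass(\g,\g)$ and $Q\in \frkC^q_\Ass(\g,\g)$, then
\[
[P,Q]_\G = P\circ Q - (-1)^{pq} Q\circ P = -(-1)^{pq}\bigl(Q\circ P - (-1)^{pq}P\circ Q\bigr) = -(-1)^{pq}[Q,P]_\G.
\]
The real work is the graded Jacobi identity. The standard strategy is to show first that $\circ$ satisfies the graded (right) pre-Lie identity
\[
(P\circ Q)\circ R - P\circ(Q\circ R) = (-1)^{qr}\bigl((P\circ R)\circ Q - P\circ(R\circ Q)\bigr),
\]
i.e.\ that the associator of $\circ$ is graded-symmetric in its last two entries. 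Once this is established, a purely formal calculation (expanding all six $\circ$-terms in $[P,[Q,R]_\G]_\G + (-1)^{p(q+r)}[Q,[R,P]_\G]_\G + (-1)^{r(p+q)}[R,[P,Q]_\G]_\G$ and pairing them via the graded pre-Lie identity) yields zero. This is the general fact that a graded pre-Lie algebra gives rise to a graded Lie algebra under the graded commutator.

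To prove the graded pre-Lie identity, I would expand $(P\circ Q)\circ R$ directly from the formula. It splits naturally into two sums according to whether the insertion of $R$ falls inside the $Q$-slot of $P\circ Q$, or outside it. The ``inside'' terms assemble into $P\circ(Q\circ R)$; the ``outside'' terms pair up symmetrically with those from $(P\circ R)\circ Q$, and the sign $(-1)^{qr}$ arises from the fact that the $R$-insertion has to pass through the $Q$-block (of degree $q$) to reach its new position. This is the main bookkeeping obstacle: one must carefully track the indices $i$, $j$ and the exponents $(i-1)q$, $(j-1)r$ and verify that when $j<i$ versus $j\ge i+q+1$ the two cases recombine into the right-hand side with the predicted sign.

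For the Maurer-Cartan characterization, I would simply specialize to $p=q=1$. Then $(-1)^{pq}=-1$, so $[\pi,\pi]_\G = \pi\circ\pi - (-1)\pi\circ\pi = 2\pi\circ\pi$. Expanding using the definition of $\circ$ with $p=q=1$,
\[
(\pi\circ\pi)(x_1,x_2,x_3) = \pi(\pi(x_1,x_2),x_3) - \pi(x_1,\pi(x_2,x_3)),
\]
so $[\pi,\pi]_\G = 0$ holds in $\frkC^2_\Ass(\g,\g)$ precisely when $\pi$ is associative. Since $\K$ has characteristic zero, dividing by $2$ is harmless, so this equivalence is exactly the content of the Maurer-Cartan equation $\tfrac{1}{2}[\pi,\pi]_\G = 0$ (there is no $d$ since we are in a bare graded Lie algebra). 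The main obstacle throughout is the sign bookkeeping in the pre-Lie identity; everything else is formal.
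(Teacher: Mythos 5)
Your proposal is correct and follows exactly the route the paper intends: the paper gives no proof (citing Gerstenhaber) but immediately remarks that $(\frkC^*_\Ass(\g,\g),\circ)$ is a graded right-symmetric (pre-Lie) algebra, which is precisely your key lemma, and the degree-$(1,1)$ specialization for the Maurer-Cartan characterization is the standard computation. No gaps.
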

The bracket $[\cdot,\cdot]_\G$ is usually called the \emph{Gerstenhaber bracket}.
\begin{rmk}
Note that $(\frkC^*_\Ass(\g,\g),\circ)$ is a graded right symmetric algebra (pre-Lie algebra).
\end{rmk}
\begin{rmk}
In fact, the Gerstenhaber bracket is the commutator of  coderivations on the cofree conilpotent coassociative coalgebra $(\bar{T}^c(\Sigma\g),\bigtriangleup)$. See \cite{Stasheff} for more details.
\end{rmk}

Let $\pi:\g\otimes \g\lon \g$ be an associative algebra structure on a vector space $\g$. Since $\pi$ is a Maurer-Cartan element of the gLa $(\frkC^*_\Ass(\g,\g),[\cdot,\cdot]_\G)$ by Theorem~\ref{gla-associative}, it follows from the graded Jacobi identity that the map
\begin{equation}\label{eq:dT}
d_\pi:\Hom(\otimes^n\g,\g)\longrightarrow \Hom(\otimes^{n+1}\g,\g), \quad d_\pi f:=[\pi,f]_\G,
\end{equation}
is a graded derivation of the gLa $(\frkC^*_\Ass(\g,\g),[\cdot,\cdot]_\G)$ satisfying $d_\pi\circ d_\pi=0$.
Thus, we have
\begin{lem}\label{lem:dgla}
Let $\pi:\g\otimes \g\lon\g$ be an associative algebra structure on a vector space $\g$.  Then $(\frkC^*_\Ass(\g,\g),[\cdot,\cdot]_\G,d_\pi)$ is a differential graded Lie algebra.
\end{lem}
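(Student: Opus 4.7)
The plan is to assemble the differential graded Lie algebra structure from two pieces already provided in the paper: the graded Jacobi identity (which will give the derivation property) and Theorem~\ref{pp:mce} applied to the Maurer-Cartan element $\pi$ (which will give $d_\pi \circ d_\pi = 0$). There is essentially nothing to do beyond unwinding definitions, but I will lay out the three checks carefully.

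First, I would verify the degree. Since $\pi \in \Hom(\g^{\otimes 2}, \g) = \frkC^1_\Ass(\g,\g)$ has degree $1$, the operator $d_\pi = [\pi,-]_\G$ sends $\frkC^n_\Ass(\g,\g)$ to $\frkC^{n+1}_\Ass(\g,\g)$, so it is a degree $+1$ map on the graded vector space $\frkC^*_\Ass(\g,\g)$ as required.

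Next I would establish that $d_\pi$ is a differential, i.e.~$d_\pi \circ d_\pi = 0$. By Theorem~\ref{gla-associative}, the associativity of $\pi$ is equivalent to the Maurer-Cartan equation $[\pi,\pi]_\G = 0$ in the gLa $(\frkC^*_\Ass(\g,\g),[\cdot,\cdot]_\G)$. Since the ambient gLa has zero differential, this is precisely the hypothesis of Theorem~\ref{pp:mce}, which tells us directly that $d_\pi = [\pi,-]_\G$ squares to zero. (Alternatively one can verify this by hand: for $f \in \frkC^n_\Ass(\g,\g)$, the graded Jacobi identity applied to $[\pi,[\pi,f]_\G]_\G$ together with $[\pi,\pi]_\G = 0$ yields $2\,d_\pi^2 f = 0$, and we divide by $2$ using that $\K$ has characteristic zero.)

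Finally I would verify the graded derivation property: for $f \in \frkC^p_\Ass(\g,\g)$ and $g \in \frkC^q_\Ass(\g,\g)$,
\begin{equation*}
d_\pi[f,g]_\G = [d_\pi f, g]_\G + (-1)^p [f, d_\pi g]_\G.
\end{equation*}
This is just the graded Jacobi identity for $(\frkC^*_\Ass(\g,\g),[\cdot,\cdot]_\G)$ applied to the triple $(\pi, f, g)$, remembering that $\pi$ has degree $1$. Combined with the two facts above, this shows that $(\frkC^*_\Ass(\g,\g),[\cdot,\cdot]_\G,d_\pi)$ is a dgLa. The main (and only) conceptual point is the identification of associativity with the Maurer-Cartan condition, which has already been done in Theorem~\ref{gla-associative}; no genuine obstacle remains.
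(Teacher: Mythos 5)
Your proof is correct and follows essentially the same route as the paper, which likewise observes that $\pi$ being a Maurer-Cartan element of the graded Lie algebra $(\frkC^*_\Ass(\g,\g),[\cdot,\cdot]_\G)$ together with the graded Jacobi identity immediately yields that $d_\pi=[\pi,\cdot]_\G$ is a degree $1$ graded derivation squaring to zero. Your explicit verification of the sign in the derivation identity and of $2\,d_\pi^2 f=[[\pi,\pi]_\G,f]_\G=0$ is a fine elaboration of what the paper leaves implicit.
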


This dgLa can control deformations of the associative algebra structure $\pi$.

\begin{thm}\label{thm:deformation}
Let $\pi:\g\otimes \g\lon\g$ be an associative algebra structure on a vector space $\g$. Then for a linear map $\pi':\g\otimes \g\lon\g$, $\pi+\pi'$ is an associative algebra structure if and only if $\pi'$ is a Maurer-Cartan element of the dgLa $(\frkC^*_\Ass(\g,\g),[\cdot,\cdot]_\G,d_\pi)$, i.e.~$\pi'$ satisfies the Maurer-Cartan equation:
$$
d_\pi \pi'+\half[\pi',\pi']_\G=0.
$$
\end{thm}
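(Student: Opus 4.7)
The plan is to deduce the theorem from the two preceding results without significant new work: Theorem \ref{gla-associative} characterises associative structures as Maurer-Cartan elements of the Gerstenhaber graded Lie algebra, and Theorem \ref{pp:mce} explains how the Maurer-Cartan equation transforms after twisting the differential by an existing Maurer-Cartan element. My proof will combine these two facts.

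First I would apply Theorem \ref{gla-associative} to the candidate multiplication $\pi+\pi'$: this sum is associative if and only if $[\pi+\pi',\pi+\pi']_\G=0$. Expanding by bilinearity, and using that the Gerstenhaber bracket is graded symmetric on degree-one elements (since for $P,Q\in\frkC^1_\Ass(\g,\g)$ the sign $(-1)^{1\cdot 1}=-1$ in the definition of $[\cdot,\cdot]_\G$ gives $[P,Q]_\G=P\circ Q+Q\circ P=[Q,P]_\G$), one obtains
\[
[\pi+\pi',\pi+\pi']_\G=[\pi,\pi]_\G+2[\pi,\pi']_\G+[\pi',\pi']_\G.
\]

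Next I would use that $\pi$ itself is associative, so Theorem \ref{gla-associative} gives $[\pi,\pi]_\G=0$. Dividing the above identity by two and recalling from \eqref{eq:dT} that $d_\pi\pi'=[\pi,\pi']_\G$, the condition $[\pi+\pi',\pi+\pi']_\G=0$ becomes precisely the Maurer-Cartan equation
\[
d_\pi\pi'+\half[\pi',\pi']_\G=0
\]
in the dgLa $(\frkC^*_\Ass(\g,\g),[\cdot,\cdot]_\G,d_\pi)$. Equivalently, this last step is Theorem \ref{pp:mce} applied with $\mu=\pi$ and $v=\pi'$, so one could short-circuit the direct expansion entirely.

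There is no serious obstacle here; the argument is essentially an unwinding of the definitions already laid down in the excerpt. The only small bookkeeping point is to track the graded symmetry of the Gerstenhaber bracket on degree-one elements correctly, since this produces the factor of $2$ in the cross term and hence the factor of $\half$ in the Maurer-Cartan equation.
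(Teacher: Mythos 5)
Your argument is correct and is exactly the route the paper intends: the theorem is an instance of Theorem \ref{pp:mce} applied with $\mu=\pi$ and $v=\pi'$, combined with the characterisation of associativity as $[\pi+\pi',\pi+\pi']_\G=0$ from Theorem \ref{gla-associative}. The direct expansion you give, including the factor of $2$ coming from the symmetry of $[\cdot,\cdot]_\G$ on odd-degree elements, is the standard unwinding and matches the paper's (unwritten) proof.
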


We now construct a dga controlling deformations of morphisms of associative algebras. Let $\g$ and $\h$ be two associative algebras. Consider the graded vector space $\Sigma^{-1}\frkC^*_{\Ass}(\g,\h)$. Define a linear map $d:\Hom(\otimes^p\g,\h)\longrightarrow \Hom(\otimes^{p+1}\g,\h)$ by
\begin{eqnarray}
(d P) ( x_1,\cdots, x_{p+1} ):=(-1)^{p}\sum_{i=1}^{p}(-1)^{i}P(x_1,\cdots,x_{i-1},x_i\cdot x_{i+1},\cdots,x_{p+1}).
\end{eqnarray}
Also define a graded mutiplication
 $$\smallsmile: \Hom(\otimes^p\g,\h)\times \Hom(\otimes^q\g,\h)\longrightarrow \Hom(\otimes^{p+q}\g,\h)$$
by
\begin{eqnarray}
(P\smallsmile Q)(x_1,\cdots,x_{p+q}):=(-1)^{pq}P(x_1,\cdots,x_p)\cdot Q(x_{p+1},\cdots,x_{p+q})
\end{eqnarray}
for all  $P\in \frkC^{p-1}_\Ass(\g,\h)$,~$Q\in \frkC^{q-1}_\Ass(\g,\h)$,~$x_1,\cdots,x_{p+q}\in \g$.
\begin{thm}
Let $\g$ and $\h$ be two associative algebras. Then $(\Sigma^{-1}\frkC^*_{\Ass}(\g,\h),\smallsmile,d)$ is a differential graded associative algebra. Its Maurer-Cartan elements are precisely the associative algebra morphisms from $\g$ to $\h$.
\end{thm}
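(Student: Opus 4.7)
The plan is to verify the three axioms of a differential graded associative algebra in turn -- associativity of $\smallsmile$, the identity $d^2 = 0$, and the graded Leibniz rule -- and then to compute the Maurer-Cartan equation in degree one explicitly and match it with the morphism condition.

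Associativity of $\smallsmile$ is a direct sign check: for $P, Q, R$ of degrees $p, q, r$, both bracketings of the triple product on arguments $(x_1, \cdots, x_{p+q+r})$ collect to the sign $(-1)^{pq + pr + qr}$ multiplied by the product $P(x_1, \cdots, x_p) \cdot Q(x_{p+1}, \cdots, x_{p+q}) \cdot R(x_{p+q+1}, \cdots, x_{p+q+r})$, which is unambiguous by the associativity of $\h$. The identity $d^2 P = 0$ is the standard simplicial cancellation: writing $d^2 P$ as a double sum $\sum_{i,j}(-1)^{i+j} P(\cdots)$ where first position $j$ is multiplied in the original list $(z_1, \cdots, z_{p+2})$ and then position $i$ in the shortened list, the terms with $i \leq j-2$ or $i \geq j+1$ pair off via the simplicial identity $\partial_i \partial_j = \partial_{j-1} \partial_i$, while the terms with $i = j - 1$ and $i = j$ cancel by the associativity of $\g$ (they yield the two bracketings of a triple product $z_{j-1} z_j z_{j+1}$ with opposite signs).

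The graded Leibniz rule $d(P \smallsmile Q) = (dP) \smallsmile Q + (-1)^p P \smallsmile (dQ)$ is verified by expanding both sides. The multiplications appearing in $d(P \smallsmile Q)(x_1, \cdots, x_{p+q+1})$ split into three groups: positions $i \leq p-1$ (multiplication strictly inside the $P$-block), $i = p$ (the bridging multiplication $x_p \cdot x_{p+1}$), and $i \geq p+1$ (multiplication strictly inside the $Q$-block). The first two groups reconstruct $(dP) \smallsmile Q$ -- the bridging term appears as the top summand $i = p$ in the definition of $dP$ -- while the third group reconstructs $(-1)^p P \smallsmile (dQ)$. The main technical work is confirming that the signs match: the leading $(-1)^{p+q}$ in $d(P \smallsmile Q)$, the overall $(-1)^{pq}$ in $\smallsmile$, and the shifted cup sign $(-1)^{(p+1)q}$ arising from $\deg(dP) = p+1$ all conspire to give equality term-by-term.

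For the Maurer-Cartan characterization, a degree one element of $\Sigma^{-1}\frkC^*_\Ass(\g, \h)$ is an element of $\Hom(\g, \h)$, that is, a linear map $P : \g \to \h$. Direct evaluation gives $(dP)(x_1, x_2) = P(x_1 \cdot x_2)$ and $(P \smallsmile P)(x_1, x_2) = -P(x_1) \cdot P(x_2)$, so the Maurer-Cartan equation $dP + P \smallsmile P = 0$ is equivalent to $P(x_1 \cdot x_2) = P(x_1) \cdot P(x_2)$ for all $x_1, x_2 \in \g$ -- exactly the condition that $P$ be a morphism of associative algebras. The main obstacle is the sign bookkeeping in the Leibniz rule, particularly at the bridging term; however, this is entirely mechanical once one tracks the degree shifts induced by $d$ and the desuspension.
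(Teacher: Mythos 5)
Your proposal is correct, and the computations check out: with the paper's conventions a degree-one element is a linear map $P\colon\g\to\h$, and indeed $(dP)(x_1,x_2)=P(x_1\cdot x_2)$ and $(P\smallsmile P)(x_1,x_2)=-P(x_1)\cdot P(x_2)$, so the Maurer--Cartan equation is exactly multiplicativity; the sign bookkeeping in the Leibniz rule also works out, since for $1\le i\le p$ the term of $d(P\smallsmile Q)$ carries the sign $(-1)^{p+q+i+pq}$, matching $(-1)^{(p+1)q}(-1)^{p+i}$ from $(dP)\smallsmile Q$, and similarly for $i>p$. The paper states this theorem without proof, so there is nothing to compare against; your direct verification (associativity of $\smallsmile$, the bar-complex cancellation for $d^2=0$ using associativity of $\g$, the graded Leibniz rule with the bridging term absorbed into the top face of $dP$, and the explicit degree-one computation) is the standard and expected argument.
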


\begin{rmk}
In fact, if the two associative algebras $\g$ and $\h$ coincide, then there is a homotopy Gerstenhaber algebra structure on the graded vector space $\frkC^*_\Ass(\g,\g)$. See \cite{Ge5,Voronov} for more details about the notion of a homotopy Gerstenhaber algebra and Deligne conjecture.
\end{rmk}

In particular, a left module $V$ over an associative algebra $\g$ is an associative algebra morphism $\g \to \End(V)$, so we have

\begin{cor}
With the above notation, $V$ is a left module over $\g$ if and only if the corresponding map $\g \to \End(V)$ is a Maurer-Cartan element in $(\Sigma^{-1}\frkC^*_{\Ass}(\g,\End(V)),\smallsmile,d)$.
\end{cor}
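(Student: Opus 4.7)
The plan is to deduce the corollary as a direct specialization of the theorem immediately preceding it, applied with $\h=\End(V)$. First, I would observe that $\End(V)$ is canonically an associative algebra under composition of endomorphisms, so the differential graded associative algebra $(\Sigma^{-1}\frkC^*_{\Ass}(\g,\End(V)),\smallsmile,d)$ is well-defined. Next I would unwind the definition of a left module: a $\g$-module structure on $V$ is, tautologically, a bilinear map $\g\otimes V\lon V$, $(x,v)\mapsto x\cdot v$, satisfying $(x\cdot y)\cdot v=x\cdot(y\cdot v)$; by currying, this is literally the same datum as a linear map $\rho\colon \g\lon\End(V)$ satisfying $\rho(x\cdot y)=\rho(x)\circ\rho(y)$ for all $x,y\in\g$, i.e., a morphism of associative algebras.

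Then I would simply invoke the preceding theorem, which asserts that Maurer-Cartan elements of $(\Sigma^{-1}\frkC^*_{\Ass}(\g,\h),\smallsmile,d)$ are precisely the associative algebra morphisms $\g\lon\h$. Specializing to $\h=\End(V)$ yields the statement.

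The only substantive consistency check worth performing explicitly is on the degrees and signs. A linear map $\rho\colon\g\lon\End(V)$ belongs to $\frkC^0_{\Ass}(\g,\End(V))=\Hom(\g,\End(V))$, which has degree $0$ in $\frkC^*$ and therefore degree $1$ in $\Sigma^{-1}\frkC^*$, so $\rho$ does sit in the graded component where the Maurer-Cartan equation makes sense. A one-line computation using the formulas for $d$ and $\smallsmile$ gives $(d\rho)(x,y)=\rho(x\cdot y)$ and $(\rho\smallsmile\rho)(x,y)=-\rho(x)\circ\rho(y)$, so $d\rho+\rho\smallsmile\rho=0$ recovers exactly the module condition $\rho(x\cdot y)=\rho(x)\circ\rho(y)$, as expected. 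There is no real obstacle here: the content has already been done in the previous theorem, and this corollary is obtained by substitution.
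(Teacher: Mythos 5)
Your argument is correct and is exactly the paper's route: the corollary is the immediate specialization of the preceding theorem to $\h=\End(V)$, using the tautological identification of a left module structure with an algebra morphism $\g\to\End(V)$. Your degree and sign check ($\rho$ sitting in degree $1$ of $\Sigma^{-1}\frkC^*_{\Ass}(\g,\End(V))$, with $(d\rho)(x,y)=\rho(x\cdot y)$ and $(\rho\smallsmile\rho)(x,y)=-\rho(x)\circ\rho(y)$) is accurate and is a welcome addition the paper leaves implicit.
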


Representations of associative algebras can also be characterized as Maurer-Cartan elements in a dgLa in the following way. Let $(\g,\pi)$ be an associative algebra, $\huaL,\huaR$ two linear maps from $\g$ to $\gl(V)$. Define $\bar{\pi},~\bar{\huaL},~\bar{\huaR}\in \Hom(\otimes^2(\g\oplus V),\g\oplus V)$ by
$$
\bar{\pi}(x+u,y+v)=\pi(x,y),~\bar{\huaL}(x+u,y+v)=\huaL_xv,~\bar{\huaR}(x+u,y+v)=\huaR_yu,~\forall x,y \in\g, ~u,v\in V.
$$
Then we have
\begin{prop}
With the above notations, $(V;\huaL,\huaR)$ is a representation of $(\g,\pi)$ if and only if $\bar{\huaL}+\bar{\huaR}$ is a Maurer-Cartan element of the dgLa  $\big(\frkC^*_\Ass(\g\oplus V,\g\oplus V),[\cdot,\cdot]_\G,d_{\bar{\pi}}=[\bar{\pi},\cdot]_\G\big)$.
\end{prop}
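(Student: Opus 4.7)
The claim is essentially a specialisation of Theorem~\ref{thm:deformation}: apply it on the enlarged space $\g\oplus V$ to the associative structure $\bar\pi$ and the candidate perturbation $\bar\huaL+\bar\huaR$. The whole problem thus reduces to recognising that the associativity of the multiplication $\bar\pi+\bar\huaL+\bar\huaR$ on $\g\oplus V$ is exactly the representation condition on $(V;\huaL,\huaR)$.

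First I would verify that $\bar\pi$ is itself an associative multiplication on $\g\oplus V$. Since $\bar\pi(a,b)=\pi(a_\g,b_\g)$ depends only on the $\g$-components and lands in $\g$, the associativity of $\bar\pi$ reduces in one line to that of $\pi$. By Theorem~\ref{gla-associative} this is equivalent to $[\bar\pi,\bar\pi]_\G=0$, so the differential $d_{\bar\pi}=[\bar\pi,\cdot]_\G$ squares to zero and Lemma~\ref{lem:dgla} provides the dgLa in the statement.

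Now Theorem~\ref{thm:deformation} applied to $(\g\oplus V,\bar\pi)$ and $\bar\pi':=\bar\huaL+\bar\huaR\in\frkC^1_\Ass(\g\oplus V,\g\oplus V)$ asserts that $\bar\huaL+\bar\huaR$ is a Maurer--Cartan element of the dgLa $\bigl(\frkC^*_\Ass(\g\oplus V,\g\oplus V),[\cdot,\cdot]_\G,d_{\bar\pi}\bigr)$ if and only if $\bar\pi+\bar\huaL+\bar\huaR$ is an associative multiplication on $\g\oplus V$. Evaluating this multiplication on $(x+u,y+v)\in(\g\oplus V)^{\otimes 2}$ yields
\[
(x+u)\ast(y+v):=(\bar\pi+\bar\huaL+\bar\huaR)(x+u,y+v)=x\cdot y+\huaL_x v+\huaR_y u,
\]
which one immediately recognises as the semidirect-product multiplication attached to the triple $(V;\huaL,\huaR)$.

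The only remaining step --- and the one requiring an honest, if routine, calculation --- is to expand the associativity of $\ast$ on an arbitrary triple $(x+u,\,y+v,\,z+w)$ and separate $\g$- and $V$-components. The $\g$-component reproduces the associativity of $\pi$, while the $V$-component splits, according to the coefficients of $u$, $v$ and $w$, into the three identities
\[
\huaR_{y\cdot z}=\huaR_z\circ\huaR_y,\qquad \huaL_x\circ\huaR_z=\huaR_z\circ\huaL_x,\qquad \huaL_{x\cdot y}=\huaL_x\circ\huaL_y,
\]
i.e.\ exactly the defining conditions of a representation, which closes the equivalence. The main (minor) obstacle is purely combinatorial: keeping straight that $\bar\huaL$ picks up the $\g$-slot of its first argument and the $V$-slot of its second, and that $\bar\huaR$ does the opposite, so that each of the three cross-terms on the two sides of the associativity law is paired with its genuine counterpart rather than being miscounted.
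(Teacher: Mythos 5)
Your proof is correct and follows essentially the same route as the paper: both reduce the statement to the fact that $(V;\huaL,\huaR)$ is a representation if and only if the semidirect-product multiplication $\bar\pi+\bar\huaL+\bar\huaR$ is associative on $\g\oplus V$, and then use $[\bar\pi,\bar\pi]_\G=0$ to identify the remaining condition with the Maurer--Cartan equation for $\bar\huaL+\bar\huaR$ (you do this by citing Theorem~\ref{thm:deformation}, the paper by expanding the bracket directly, which is the same computation). Your explicit componentwise verification of the semidirect-product fact, including the three identities you extract, is accurate and merely fills in a step the paper leaves implicit.
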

\begin{proof}
Since $(V;\huaL,\huaR)$ is a representation of $(\g,\pi)$ if and only if $\bar{\pi}+\bar{\huaL}+\bar{\huaR}$ is an associative algebra structure on the vector space $\g\oplus V$. Moreover, we know that $\bar{\pi}$ is an associative algebra structure on $\g\oplus V$.
It is straightforward to deduce that $[\bar{\pi}+\bar{\huaL}+\bar{\huaR},\bar{\pi}+\bar{\huaL}+\bar{\huaR}]_\G=0$ is equivalent to the following equality:
\begin{equation*}
2[\bar{\pi},\bar{\huaL}+\bar{\huaR}]_\G+[\bar{\huaL}+\bar{\huaR},\bar{\huaL}+\bar{\huaR}]_\G=0.
\end{equation*}
The proof is finished.
\end{proof}

At the end of this subsection, we give the relationship between the Hochschild coboundary operator and the differential $d_\pi$ defined by the Maurer-Cartan element. Assume that $(\g,\cdot)$ is an associative algebra and set
$\pi(x,y)=x\cdot y$. Consider the Hochschild coboundary operator
$\dM:\frkC^{n-1}_{\Ass}(\g;\g)\longrightarrow \frkC^{n}_{\Ass}(\g;\g)$ associated to the regular representation,
\begin{equation*}
\begin{split}
(\dM f)(x_1,\cdots,x_{n+1}) ={}& x_1\cdot f(x_2,\cdots,x_{n+1})+\sum_{i=1}^n(-1)^if(x_1,\cdots,x_{i-1},x_i\cdot x_{i+1},\cdots,x_{n+1})\\
&+ (-1)^{n+1}f(x_1,\cdots,x_n)\cdot x_{n+1}
\end{split}
\end{equation*}
for all $f\in \frkC^{n-1}_{\Ass}(\g;\g)$ and $x_1,x_2,\cdots,x_{n+1}\in \g$. Then we have
\begin{prop}
For all $f\in \frkC^{n-1}_{\Ass}(\g;\g)$, we have $$\dM f=(-1)^{n-1}d_\pi f=(-1)^{n-1}[\pi,f]_\G,~\forall n=1,2,\cdots.$$
\end{prop}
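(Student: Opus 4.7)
The plan is to unfold both sides using the explicit formulas already in the paper and then compare term by term. Concretely, for $f \in \frkC^{n-1}_{\Ass}(\g;\g) = \Hom(\otimes^n\g,\g)$, the element $\pi$ sits in $\frkC^1_{\Ass}(\g,\g)$, so in the Gerstenhaber bracket $[\pi,f]_\G = \pi\circ f - (-1)^{1\cdot(n-1)} f\circ \pi$ I only need to evaluate the two composition products from Theorem~\ref{gla-associative}.

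First I would compute $\pi \circ f$. Since $\pi$ has arity $2$, the sum defining $\pi\circ f$ collapses to exactly two terms, one inserting $f$ into the first slot of $\pi$ and one inserting it into the second, with the shuffle sign $(-1)^{(i-1)(n-1)}$:
\begin{equation*}
(\pi\circ f)(x_1,\ldots,x_{n+1}) = \pi(f(x_1,\ldots,x_n), x_{n+1}) + (-1)^{n-1}\pi(x_1, f(x_2,\ldots,x_{n+1})).
\end{equation*}
Rewriting using $\pi(a,b)=a\cdot b$, these are exactly the outer left and right multiplication terms appearing in $\dM f$.

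Next I would compute $f \circ \pi$. Here $\pi$ is inserted into the $n$ slots of $f$ with signs $(-1)^{(i-1)\cdot 1}$, producing
\begin{equation*}
(f\circ \pi)(x_1,\ldots,x_{n+1}) = \sum_{i=1}^{n}(-1)^{i-1} f(x_1,\ldots,x_{i-1}, x_i\cdot x_{i+1}, x_{i+2},\ldots, x_{n+1}),
\end{equation*}
which up to an overall sign is exactly the inner ``composition'' part of $\dM f$.

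Assembling these and multiplying through by $(-1)^{n-1}$, the sign on the $x_1\cdot f(\ldots)$ term becomes $+1$, the sign on each inner term $f(\ldots,x_i\cdot x_{i+1},\ldots)$ becomes $(-1)^{n-1}\cdot(-1)\cdot(-1)^{i-1} = (-1)^{n+i-1} \cdot (-1) = (-1)^i$ after combining with the $-(-1)^{n-1}$ in front of $f\circ \pi$, and the sign on the $f(\ldots)\cdot x_{n+1}$ term becomes $(-1)^{n-1}=(-1)^{n+1}$. Matching these against the Hochschild formula recalled just above the proposition finishes the proof. The only real obstacle is careful sign bookkeeping; there is no conceptual content beyond the fact that $\pi$ is a degree $1$ element whose left and right insertions into an $n$-ary operation literally reproduce the three types of terms in the Hochschild differential.
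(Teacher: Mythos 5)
Your proposal is correct and follows essentially the same route as the paper: expand $[\pi,f]_\G=\pi\circ f-(-1)^{n-1}f\circ\pi$ via the explicit circle-product formula (the two insertions of $f$ into $\pi$ and the $n$ insertions of $\pi$ into $f$) and match the three resulting groups of terms against the Hochschild differential. The only blemish is the garbled intermediate sign chain ``$(-1)^{n+i-1}\cdot(-1)=(-1)^i$'' for the inner terms --- the correct bookkeeping is $(-1)^{n-1}\cdot\bigl(-(-1)^{n-1}\bigr)\cdot(-1)^{i-1}=(-1)^{i}$ --- but your displayed formulas and the final signs all agree with the paper's computation.
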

\begin{proof}
For all $x_1,x_2,\cdots,x_{n+1}\in \g$, we have
\begin{equation*}
\begin{split}
&(-1)^{n-1}[\pi,f]_\G(x_1,x_2,\cdots,x_{n+1})\\
={}& (-1)^{n-1}\pi(f(x_1,x_2,\cdots,x_{n}),x_{n+1})+(-1)^{n-1}(-1)^{n-1}\pi(x_1,f(x_2,\cdots,x_{n+1}))\\
&- (-1)^{n-1}(-1)^{n-1}\sum_{i=1}^{n}(-1)^{i-1}f(x_1,\cdots,x_{i-1},\pi(x_i,x_{i+1}),\cdots,x_{n+1})\\
={}& (\dM f)(x_1,\cdots,x_{n+1}),
\end{split}
\end{equation*}
which finishes the proof.
\end{proof}

\subsection{Lie algebras}

Let $\g$ be a vector space. Denote by $\frkC^p_\Lie(\g,\g)=\Hom(\wedge^{p+1}\g,\g)$ and set $\frkC^*_\Lie(\g,\g)=\oplus_{p\in\mathbb N}\frkC^p_\Lie(\g,\g).$ We assume that the degree of an element in $\frkC^p_\Lie(\g,\g)$ is $p$.

\begin{thm} For $P\in \frkC^p_\Lie(\g,\g), Q\in \frkC^q_\Lie(\g,\g)$, define the \emph{Nijenhuis-Richardson bracket} $[\cdot,\cdot]_{\NR}$ by
$$ [P,Q]_{\NR}:=P\circ Q- (-1)^{pq}Q\circ P,$$
where $P\circ Q\in \frkC^{p+q}_\Lie(\g,\g)$ is defined by
\begin{equation}
(P\circ Q)(x_1,\cdots,x_{p+q+1})
:=\smashoperator{\sum_{\sigma\in\mathbb S_{(q+1,p)}}} (-1)^\sigma P(Q(x_{\sigma(1)},\cdots,x_{\sigma(q+1)}),x_{\sigma(q+2)}, \cdots,x_{\sigma(p+q+1)}).
\label{eq:fgcirc}
\end{equation}
Then $(\frkC^*_\Lie(\g,\g),[\cdot,\cdot]_{\NR})$ is a graded Lie algebra.

Furthermore, $(\g,\pi)$ is a Lie algebra, where $\pi\in\Hom(\wedge^2\g,\g)$, if and only if $\pi$ is a Maurer-Cartan element of the graded Lie algebra $(\frkC^*_\Lie(\g,\g),[\cdot,\cdot]_{\NR})$, i.e.
$$[\pi,\pi]_\NR=0.$$
\end{thm}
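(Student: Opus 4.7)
The plan is to establish the two assertions separately: first the graded Lie structure, then the Maurer-Cartan characterization.

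For the graded Lie structure, graded antisymmetry $[P,Q]_{\NR}=-(-1)^{pq}[Q,P]_{\NR}$ is immediate from the definition, so the work lies in verifying the graded Jacobi identity
\[
(-1)^{pr}[[P,Q]_{\NR},R]_{\NR}+(-1)^{pq}[[Q,R]_{\NR},P]_{\NR}+(-1)^{qr}[[R,P]_{\NR},Q]_{\NR}=0.
\]
The cleanest route is to show that the composition product $\circ$ defined by \eqref{eq:fgcirc} equips $\frkC^*_\Lie(\g,\g)$ with a \emph{graded right pre-Lie algebra} structure; that is, the associator
\[
\mathrm{Assoc}(P,Q,R):=(P\circ Q)\circ R - P\circ(Q\circ R)
\]
is graded symmetric in the last two slots: $\mathrm{Assoc}(P,Q,R)=(-1)^{qr}\mathrm{Assoc}(P,R,Q)$. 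Once this is established, a standard formal manipulation (valid in any graded right pre-Lie algebra) shows that the graded commutator $[P,Q]_{\NR}=P\circ Q-(-1)^{pq}Q\circ P$ automatically satisfies the graded Jacobi identity, since the non-associative error terms cancel symmetrically when summed cyclically with the prescribed signs. This reduces the problem to verifying the single graded-symmetry identity for the associator.

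The pre-Lie identity itself is the main combinatorial obstacle. Expanding $(P\circ Q)\circ R$ on $x_1,\dots,x_{p+q+r+1}$ via \eqref{eq:fgcirc} produces a double sum indexed by pairs of shuffles; the terms split naturally into two families according to whether $R$ is inserted \emph{inside} the argument of $Q$ or \emph{outside} (i.e.\ into one of the remaining $p$ slots of $P$). The ``inside'' family reassembles, after re-indexing the nested shuffles as a single shuffle of type $(r+1,q,p)$, into $P\circ(Q\circ R)$; this is where the shuffle sign convention pays off. The ``outside'' family is manifestly graded symmetric in $Q$ and $R$, because swapping the roles of $Q$ and $R$ amounts to transposing two disjoint blocks among the arguments of $P$, producing exactly the sign $(-1)^{qr}$. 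Carefully tracking the Koszul signs in this decomposition is the main bookkeeping task, but once done it delivers the desired graded right pre-Lie identity, and hence the graded Jacobi identity.

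For the Maurer-Cartan characterization, let $\pi\in\Hom(\wedge^2\g,\g)$ have degree $1$. Since $(-1)^{1\cdot 1}=-1$, we have $[\pi,\pi]_{\NR}=2(\pi\circ\pi)$. Evaluating on $x_1,x_2,x_3$, the $(2,1)$-shuffles are $(1,2,3)$, $(1,3,2)$, $(2,3,1)$ with signs $+,-,+$, so
\[
(\pi\circ\pi)(x_1,x_2,x_3)=\pi(\pi(x_1,x_2),x_3)-\pi(\pi(x_1,x_3),x_2)+\pi(\pi(x_2,x_3),x_1).
\]
Using the skew-symmetry of $\pi$ to rewrite $-\pi(\pi(x_1,x_3),x_2)=\pi(x_2,\pi(x_1,x_3))$ and similarly for the third term, this expression becomes precisely the Jacobiator $[x_1,[x_2,x_3]]+[x_2,[x_3,x_1]]+[x_3,[x_1,x_2]]$ under $\pi(x,y)=[x,y]$. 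Hence $[\pi,\pi]_{\NR}=0$ if and only if $\pi$ satisfies the Jacobi identity, i.e.\ defines a Lie bracket, completing the proof.
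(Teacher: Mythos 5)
The paper states this theorem without proof (it is the classical result of Nijenhuis--Richardson), so there is no in-text argument to compare against; your proposal supplies the standard proof, and its strategy is exactly the one the paper implicitly endorses when it remarks, in the associative case, that $(\frkC^*_\Ass(\g,\g),\circ)$ is a graded right-symmetric (pre-Lie) algebra. The outline is correct: graded right-symmetry of the associator does imply the graded Jacobi identity for the commutator bracket, and your inside/outside decomposition of $(P\circ Q)\circ R$ is the right one. Two small points deserve attention if you write out the details. First, in the ``inside'' family you need the observation that when the argument carrying $R$'s output lands in the $Q$-block of a $(q+1,p)$-shuffle, it is forced to occupy the \emph{first} slot of $Q$ (because shuffles are increasing on each block and that argument has the smallest index); this is precisely what makes the family reassemble into $P\circ(Q\circ R)$, whose definition also inserts $R$ into the first slot. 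Second, in the Maurer--Cartan computation your three terms actually sum to \emph{minus} the Jacobiator, i.e.\ $(\pi\circ\pi)(x_1,x_2,x_3)=-\bigl([x_1,[x_2,x_3]]+[x_2,[x_3,x_1]]+[x_3,[x_1,x_2]]\bigr)$; the sign is immaterial for the equivalence $[\pi,\pi]_{\NR}=0\iff$ Jacobi, but ``precisely the Jacobiator'' is off by $-1$. Also, since $\g$ here is an ungraded vector space, the only signs in play are shuffle signs and the degree signs $(-1)^{pq}$ --- there are no Koszul signs from internal degrees, which makes the bookkeeping lighter than your phrasing suggests.
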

Now we construct the differential graded Lie algebra that controls deformations of Lie algebras.
\begin{thm}\label{thm:deform-Lie}
Let $(\g,\pi)$ be a Lie algebra. Then $(\frkC^*_\Lie(\g,\g),[\cdot,\cdot]_{\NR},d_\pi)$ is a dgLa, where $d_\pi$ is defined by
\begin{equation}
d_\pi:=[\pi,\cdot]_{\NR}.
\end{equation}
Moreover, $\pi+\pi'$ is a Lie algebra structure on $\g$, where $\pi'\in \Hom(\wedge^2\g,\g)$, if and only if $\pi'$ is a Maurer-Cartan element of the dgLa $(\frkC^*_\Lie(\g,\g),[\cdot,\cdot]_{\NR},d_\pi)$.
\end{thm}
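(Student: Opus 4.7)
The plan is to reduce the statement to a direct application of Theorem~\ref{pp:mce}, using the preceding theorem which identifies Lie algebra structures on $\g$ with Maurer-Cartan elements of the graded Lie algebra $(\frkC^*_\Lie(\g,\g),[\cdot,\cdot]_{\NR})$. Since $(\g,\pi)$ is assumed to be a Lie algebra, that theorem tells us $[\pi,\pi]_\NR=0$, so $\pi\in\frkC^1_\Lie(\g,\g)$ is a Maurer-Cartan element of this gLa.

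First I would invoke Theorem~\ref{pp:mce} with this gLa and this Maurer-Cartan element $\pi$: it yields immediately that $d_\pi=[\pi,\cdot]_\NR$ is a differential on $\frkC^*_\Lie(\g,\g)$. The one small thing to check beyond what Theorem~\ref{pp:mce} directly supplies is that $d_\pi$ is a graded derivation with respect to $[\cdot,\cdot]_\NR$, but this is exactly the graded Jacobi identity for the Nijenhuis--Richardson bracket with $\pi$ placed in the first slot, so it is automatic. This establishes that $(\frkC^*_\Lie(\g,\g),[\cdot,\cdot]_\NR,d_\pi)$ is a dgLa.

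For the \emph{Moreover} clause, I would again appeal to the second half of Theorem~\ref{pp:mce}: for any $\pi'\in\frkC^1_\Lie(\g,\g)=\Hom(\wedge^2\g,\g)$, the sum $\pi+\pi'$ is a Maurer-Cartan element of the gLa $(\frkC^*_\Lie(\g,\g),[\cdot,\cdot]_\NR)$ if and only if $\pi'$ is a Maurer-Cartan element of the dgLa $(\frkC^*_\Lie(\g,\g),[\cdot,\cdot]_\NR,d_\pi)$. Combining this with the equivalence from the preceding theorem, which reads ``$\mu$ is a Lie algebra structure on $\g$ iff $[\mu,\mu]_\NR=0$'' applied to $\mu=\pi+\pi'$, finishes the argument.

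The proof has essentially no obstacle since it is a bookkeeping corollary of Theorem~\ref{pp:mce}. The only computation implicitly invoked is the expansion
\[
[\pi+\pi',\pi+\pi']_\NR=[\pi,\pi]_\NR+2[\pi,\pi']_\NR+[\pi',\pi']_\NR,
\]
which uses the graded symmetry $[\pi,\pi']_\NR=[\pi',\pi]_\NR$ for two degree-$1$ elements; substituting $[\pi,\pi]_\NR=0$ and dividing by $2$ yields exactly $d_\pi\pi'+\half[\pi',\pi']_\NR=0$, the Maurer-Cartan equation in the dgLa.
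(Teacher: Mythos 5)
Your proposal is correct and follows exactly the route the paper intends: it identifies $\pi$ as a Maurer-Cartan element of the graded Lie algebra $(\frkC^*_\Lie(\g,\g),[\cdot,\cdot]_{\NR})$ via the preceding theorem and then applies Theorem~\ref{pp:mce}, just as the paper does explicitly in the parallel associative case (Lemma~\ref{lem:dgla} and Theorem~\ref{thm:deformation}). The expansion of $[\pi+\pi',\pi+\pi']_{\NR}$ using graded symmetry of the bracket on degree-$1$ elements is the same bookkeeping the paper relies on.
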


Representations of Lie algebras can be characterized as Maurer-Cartan elements in a dgLa. Let $(\g,\pi)$ be a Lie algebra, $\rho$ a linear map from $\g$ to $\gl(V)$. Define $\bar{\pi},\bar{\rho}\in \Hom(\wedge^2(\g\oplus V),\g\oplus V)$ by
$$
\bar{\pi}(x+u,y+v)=\pi(x,y),~\bar{\rho}(x+u,y+v)=\rho(x)v-\rho(y)u, \quad \forall x,y \in\g, ~u,v\in V.
$$
Then we have
\begin{prop}
With the above notations, $(V;\rho)$ is a representation of $(\g,\pi)$ if and only if $\bar{\rho}$ is a Maurer-Cartan element of the dgLa $\big(\frkC^*_\Lie(\g\oplus V,\g\oplus V),[\cdot,\cdot]_\NR,d_{\bar{\pi}}=[\bar{\pi},\cdot]_\NR\big)$, i.e.
$$
d_{\bar{\pi}}\bar{\rho}+\half[\bar{\rho},\bar{\rho}]_{\NR}=0.
$$
\end{prop}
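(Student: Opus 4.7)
The plan is to mimic exactly the argument given for the associative case. First I would verify that $\bar{\pi}$, viewed as an element of $\Hom(\wedge^2(\g\oplus V), \g\oplus V)$, is itself a Lie bracket on $\g\oplus V$: this is immediate because its only nonzero component is the original $\pi$, so the Jacobi identity for $\bar{\pi}$ reduces to that of $\pi$. Equivalently, $[\bar{\pi},\bar{\pi}]_\NR = 0$ by Theorem~\ref{gla-associative}'s Lie analogue.

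Next I would reduce the representation condition to a Lie algebra condition on $\g\oplus V$. A direct check from the definitions shows that $(V;\rho)$ is a representation of $(\g,\pi)$ if and only if the bracket $\bar{\pi}+\bar{\rho}$ defines a Lie algebra structure on $\g\oplus V$ (the semidirect product). Indeed, the skew-symmetry of $\bar{\rho}$ follows from its definition, and the Jacobi identity for $\bar{\pi}+\bar{\rho}$ expands into three types of terms: triple-$\g$ terms giving the Jacobi identity for $\pi$, mixed $\g\g V$ terms giving exactly the condition $\rho([x,y]_\g) = [\rho(x),\rho(y)]$, and $\g V V$ or $VVV$ terms which vanish identically.

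Now I would exploit Theorem~\ref{gla-associative}'s Lie analogue: $\bar{\pi}+\bar{\rho}$ is a Maurer-Cartan element of the gLa $(\frkC^*_\Lie(\g\oplus V,\g\oplus V),[\cdot,\cdot]_\NR)$ if and only if $[\bar{\pi}+\bar{\rho},\bar{\pi}+\bar{\rho}]_\NR = 0$. Expanding by bilinearity,
\begin{equation*}
[\bar{\pi}+\bar{\rho},\bar{\pi}+\bar{\rho}]_\NR = [\bar{\pi},\bar{\pi}]_\NR + 2[\bar{\pi},\bar{\rho}]_\NR + [\bar{\rho},\bar{\rho}]_\NR.
\end{equation*}
Since $[\bar{\pi},\bar{\pi}]_\NR = 0$ and $d_{\bar{\pi}} = [\bar{\pi},\cdot]_\NR$, this equation is equivalent to $d_{\bar{\pi}}\bar{\rho} + \tfrac{1}{2}[\bar{\rho},\bar{\rho}]_\NR = 0$, which is the Maurer-Cartan equation in the dgLa $(\frkC^*_\Lie(\g\oplus V,\g\oplus V),[\cdot,\cdot]_\NR,d_{\bar{\pi}})$. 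Combining this with the previous reduction completes the proof.

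The only nontrivial step is verifying that the Jacobi identity for $\bar{\pi}+\bar{\rho}$ exactly encodes the representation axiom $\rho([x,y]_\g) = [\rho(x),\rho(y)]$; the other mixed terms vanish by design of $\bar{\rho}$ (which has no $V\otimes V$ input). I expect this to be a routine term-by-term verification using skew-symmetrization, so the main bookkeeping obstacle is tracking signs in the Nijenhuis-Richardson bracket when expanding $[\bar{\pi},\bar{\rho}]_\NR$, but no conceptual difficulty arises.
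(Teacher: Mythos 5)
Your proof is correct and follows exactly the same strategy as the paper's proof of the analogous associative-algebra proposition (the paper states the Lie version without proof, leaving the adaptation to the reader): identify the representation condition with the statement that $\bar{\pi}+\bar{\rho}$ is a semidirect-product Lie structure on $\g\oplus V$, then expand $[\bar{\pi}+\bar{\rho},\bar{\pi}+\bar{\rho}]_\NR=0$ using $[\bar{\pi},\bar{\pi}]_\NR=0$ to recover the Maurer-Cartan equation for $\bar{\rho}$ in the twisted dgLa. No gaps.
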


At the end of this subsection, we give the relationship between the Chevalley-Eilenberg coboundary operator and the differential $d_\pi$ defined by the Maurer-Cartan element. Assume that $(\g,[\cdot,\cdot]_\g)$ is a Lie algebra and set
$\pi(x,y)=[x,y]_\g$. Consider the Chevalley-Eilenberg coboundary operator
$\dM:\frkC^{n-1}_{\Lie}(\g;\g)\longrightarrow \frkC^{n}_{\Lie}(\g;\g)$ associated to the adjoint representation, then we have
\begin{prop}
For all $f\in \frkC^{n-1}_{\Lie}(\g;\g)$, we have $$\dM f=(-1)^{n-1}d_\pi f=(-1)^{n-1}[\pi,f]_{\NR},~\forall n=1,2,\cdots.$$
\end{prop}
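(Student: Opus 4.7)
The plan is to unfold both sides of the claimed identity and match them term by term. Since $\deg\pi=1$ and $\deg f=n-1$, the Nijenhuis-Richardson bracket specializes to
\[
[\pi,f]_{\NR}\;=\;\pi\circ f\;-\;(-1)^{n-1}\,f\circ\pi,
\]
so the identity $\dM f=(-1)^{n-1}[\pi,f]_{\NR}$ amounts to showing that $(-1)^{n-1}\pi\circ f$ contributes the first sum in the Chevalley-Eilenberg formula (the $\rho(x_i)$-terms) and that $-(f\circ\pi)$ contributes the second sum (the $[x_i,x_j]_\g$-terms).

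First I would evaluate $\pi\circ f$. By \eqref{eq:fgcirc} it is a sum over $(n,1)$-shuffles, and each such shuffle is determined by choosing which index $i\in\{1,\dots,n+1\}$ sits in the final position, the remaining indices staying in their natural order. A quick inversion count shows this shuffle has sign $(-1)^{n+1-i}$, so
\[
(\pi\circ f)(x_1,\dots,x_{n+1})=\sum_{i=1}^{n+1}(-1)^{n+1-i}\,[f(x_1,\dots,\hat{x}_i,\dots,x_{n+1}),x_i]_\g,
\]
and skew-symmetry of $[\cdot,\cdot]_\g$ converts the right-hand side into $-\sum_i(-1)^{n+1-i}\ad(x_i)f(x_1,\dots,\hat{x}_i,\dots,x_{n+1})$. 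Multiplying by $(-1)^{n-1}$ collapses the sign to $(-1)^{i+1}$, which is exactly the first summand of $(\dM f)(x_1,\dots,x_{n+1})$ with $\rho=\ad$.

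Next I would evaluate $f\circ\pi$. Now the sum is over $(2,n-1)$-shuffles, which are parameterized by pairs $1\le i<j\le n+1$ with $\sigma(1)=i$, $\sigma(2)=j$. Moving $x_i,x_j$ to the front costs $(i-1)+(j-2)$ transpositions, giving sign $(-1)^{i+j-1}$, whence
\[
(f\circ\pi)(x_1,\dots,x_{n+1})=\sum_{1\le i<j\le n+1}(-1)^{i+j-1}\,f([x_i,x_j]_\g,x_1,\dots,\hat{x}_i,\dots,\hat{x}_j,\dots,x_{n+1}).
\]
Negating (as demanded by the bracket) flips $(-1)^{i+j-1}$ to $(-1)^{i+j}$, producing precisely the second summand in the Chevalley-Eilenberg formula.

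Combining the two computations gives $(-1)^{n-1}[\pi,f]_{\NR}=\dM f$, as claimed. The only real obstacle is bookkeeping the shuffle signs; a useful sanity check is the case $n=1,2$, and one may alternatively observe that both $(-1)^{n-1}[\pi,\cdot]_{\NR}$ and $\dM$ are degree $1$ derivations of the graded tensor/exterior structure that agree on the generating data $(\pi$ versus $\ad)$, which forces them to coincide, but the direct expansion above is the most transparent route and parallels the analogous proof given for the associative case earlier.
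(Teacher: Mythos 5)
Your computation is correct: the shuffle signs $(-1)^{n+1-i}$ for the $(n,1)$-shuffles and $(-1)^{i+j-1}$ for the $(2,n-1)$-shuffles are right, and combining them with the prefactor $(-1)^{n-1}$ reproduces exactly the two sums of the Chevalley--Eilenberg coboundary with $\rho=\ad$. The paper states this proposition without proof, but your direct term-by-term expansion is precisely the argument it gives for the associative analogue, so this is essentially the intended approach.
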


\subsection{pre-Lie algebras}

Let $\g$ be a vector space. Denote by $\frkC^p_\preLie(\g,\g)=\Hom(\wedge^{p}\g\otimes \g,\g)$ and set $\frkC^*_\preLie(\g,\g)=\oplus_{p\in\mathbb N}\frkC^p_\preLie(\g,\g).$ We assume that the degree of an element in $\frkC^p_\preLie(\g,\g)$ is $p$.

For $P\in\frkC^p_\preLie(\g,\g)$ and $Q\in\frkC^q_\preLie(\g,\g)$, define  $P\circ Q\in\frkC^{p+q}_\preLie(\g,\g)$ by
\begin{equation}\mlabel{eq:pLbrac}
\begin{split}
&(P\circ Q)(x_1,\cdots,x_{p+q+1})\\
:={}& \sum_{\sigma\in\mathbb S_{(q,1,p-1)}}(-1)^{\sigma}P(Q(x_{\sigma(1)},\cdots,x_{\sigma(q+1)}),x_{\sigma(q+2)},\cdots,x_{\sigma(p+q)},x_{p+q+1})\\
&+ (-1)^{pq}\sum_{\sigma\in\mathbb S_{(p,q)}}(-1)^{\sigma}P(x_{\sigma(1)},\cdots,x_{\sigma(p)},Q(x_{\sigma(p+1)},\cdots,x_{\sigma(p+q)},x_{p+q+1})).
\end{split}
\end{equation}

\begin{thm}
The graded vector space
$\frkC^*_\preLie(\g,\g)$ equipped with the \emph{Matsushima-Nijenhuis bracket} $[\cdot,\cdot]_{\MN}$ given by
\begin{equation}
[P,Q]_{\MN}:=P\circ Q-(-1)^{pq}Q\circ P,
\quad \forall P\in\frkC^p_\preLie(\g,\g),~Q\in\frkC^q_\preLie(\g,\g),
\mlabel{eq:glapL}
\end{equation}
is a graded Lie algebra.
\end{thm}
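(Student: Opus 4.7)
The plan is to derive the graded Jacobi identity from a graded right pre-Lie identity for $\circ$, in direct analogy with the proofs of the Gerstenhaber and Nijenhuis--Richardson theorems recalled above. Graded antisymmetry of $[\cdot,\cdot]_{\MN}$ is immediate from \eqref{eq:glapL}: swapping $P$ and $Q$ multiplies the expression by $-(-1)^{pq}$.

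For the Jacobi identity, I would first establish that $(\frkC^*_\preLie(\g,\g), \circ)$ is a graded right-symmetric (pre-Lie) algebra, i.e.~that the associator
$$(P, Q, R) := (P \circ Q) \circ R - P \circ (Q \circ R)$$
is graded symmetric in its last two arguments:
$$(P, Q, R) = (-1)^{qr}(P, R, Q).$$
It is a standard, purely algebraic fact that the graded commutator of any graded right-symmetric product satisfies the graded Jacobi identity, so the theorem reduces to verifying the displayed identity.

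To establish it, I would expand $(P \circ Q) \circ R$ using \eqref{eq:pLbrac} twice. The resulting terms split naturally into two classes: those in which $R$ is substituted into (an argument of) $Q$, and those in which $R$ is substituted into a slot of $P$ disjoint from the $Q$-block. After matching shuffle permutations, the first class reassembles precisely into $P \circ (Q \circ R)$ and is therefore killed by the subtraction in the associator. The second class consists of terms of the shape $P(\ldots, Q(\ldots), \ldots, R(\ldots), \ldots)$; these are visibly stable under the exchange $Q \leftrightarrow R$, up to the Koszul sign $(-1)^{qr}$ from permuting the blocks past each other.

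The main obstacle is the sign bookkeeping. The $(q,1,p-1)$- and $(p,q)$-shuffles appearing in the inner $\circ$ must be merged with the outer shuffles into finer $(q,r,1,\cdot)$-, $(r,q,1,\cdot)$-, $(\cdot,q,r)$-, and $(\cdot,r,q)$-type shuffles, and one has to distinguish the four sub-cases determined by whether each of $Q$ and $R$ occupies a wedge slot of $P$ or the distinguished last slot of $P$ in \eqref{eq:pLbrac}. This is where the pre-Lie cochain complex is strictly more intricate than the Lie case: the presence of two flavours of slots doubles the case analysis, but once the shuffle decompositions are carefully recorded the identification of terms on both sides of the required symmetry reduces to a standard combinatorial cancellation parallel to the one carried out in the Nijenhuis--Richardson theorem.
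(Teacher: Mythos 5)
The paper offers no proof of this theorem at all --- it simply refers the reader to \cite{CL,Nijenhuis,WBLS} --- but your strategy of first showing that $(\frkC^*_\preLie(\g,\g),\circ)$ is a graded right-symmetric algebra and then invoking the standard fact that the graded commutator of a graded pre-Lie product satisfies the graded Jacobi identity is precisely the argument of those references, and it is the same mechanism the paper itself signals in the associative case via the remark that $(\frkC^*_\Ass(\g,\g),\circ)$ is a graded right-symmetric algebra. Your description of the associator computation --- the nested terms (where $R$ feeds into $Q$) cancelling against $P\circ(Q\circ R)$, the disjoint terms being $Q\leftrightarrow R$ symmetric up to the Koszul sign $(-1)^{qr}$, and the extra case split caused by the distinguished last slot in \eqref{eq:pLbrac} --- is an accurate outline of that verification, though the deferred sign bookkeeping is where essentially all the labour of a complete proof resides.
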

See \cite{CL,Nijenhuis,WBLS} for more details about the Matsushima-Nijenhuis bracket.

\begin{rmk}
For $\pi\in\Hom(\g\otimes \g,\g)$, we have
\begin{equation*}
\begin{split}
[\pi,\pi]_{\MN}(x,y,z)&= 2(\pi\circ\pi)(x,y,z)\\
&= 2\big(\pi(\pi(x,y),z)-\pi(\pi(y,x),z)-\pi(x,\pi(y,z))+\pi(y,\pi(x,z))\big).
\end{split}
\end{equation*}
Thus, $\pi$ defines a pre-Lie algebra structure on $\g$ if and only if $[\pi,\pi]_{\MN}=0,$ that is, $\pi$ is a Maurer-Cartan element  of the graded Lie algebra $(\frkC^*_\preLie(\g,\g),[\cdot,\cdot]_{\MN})$.
\label{rk:plmc}
\end{rmk}

Now we construct the dgLa that controls deformations of pre-Lie algebras.
\begin{thm}
Let $(\g,\pi)$ be a pre-Lie algebra. Then $(\frkC^*_\preLie(\g,\g),[\cdot,\cdot]_{\MN},d_\pi)$ is a dgLa, where $d_\pi$ is defined by
\begin{equation}
d_\pi:=[\pi,\cdot]_{\MN}.
\end{equation}
Moreover, $\pi+\pi'$ is a pre-Lie algebra structure on $\g$, where $\pi'\in \Hom(\g\otimes\g,\g)$, if and only if $\pi'$ is a Maurer-Cartan element of the dgLa $(\frkC^*_\preLie(\g,\g),[\cdot,\cdot]_{\MN},d_\pi)$.
\end{thm}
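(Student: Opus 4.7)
The proof follows the same template as the associative and Lie algebra cases (Lemma \ref{lem:dgla}, Theorem \ref{thm:deformation} and Theorem \ref{thm:deform-Lie}), and is essentially an application of Theorem \ref{pp:mce} to the graded Lie algebra $(\frkC^*_\preLie(\g,\g),[\cdot,\cdot]_{\MN})$. The plan has two separate parts: verifying that $(\frkC^*_\preLie(\g,\g),[\cdot,\cdot]_{\MN},d_\pi)$ is a dgLa, and then identifying Maurer-Cartan elements of this dgLa with pre-Lie deformations of $\pi$.

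For the first part, I would first invoke Remark \ref{rk:plmc}, which records that $\pi$ is a Maurer-Cartan element of the gLa $(\frkC^*_\preLie(\g,\g),[\cdot,\cdot]_{\MN})$, i.e.\ $[\pi,\pi]_{\MN}=0$. Applying the graded Jacobi identity to $[\pi,[\pi,u]_{\MN}]_{\MN}$ with $u\in\frkC^*_\preLie(\g,\g)$ then yields $2\,d_\pi^{\,2}(u)=[[\pi,\pi]_{\MN},u]_{\MN}=0$, so $d_\pi^{\,2}=0$; and the same identity applied to $[\pi,[u,v]_{\MN}]_{\MN}$ shows that $d_\pi$ is a degree $1$ graded derivation of $[\cdot,\cdot]_{\MN}$. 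Both observations are exactly the content of Theorem \ref{pp:mce} specialized to our gLa, so this step is automatic once the gLa structure is in hand.

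For the \textbf{moreover} part, I would expand the bracket of $\pi+\pi'$ with itself. Since $|\pi|=|\pi'|=1$, graded skew-symmetry gives $[\pi,\pi']_{\MN}=[\pi',\pi]_{\MN}$, so
\[
[\pi+\pi',\pi+\pi']_{\MN}=[\pi,\pi]_{\MN}+2[\pi,\pi']_{\MN}+[\pi',\pi']_{\MN}.
\]
Using $[\pi,\pi]_{\MN}=0$ and $d_\pi\pi'=[\pi,\pi']_{\MN}$, the right-hand side equals $2\bigl(d_\pi\pi'+\tfrac12[\pi',\pi']_{\MN}\bigr)$, which vanishes if and only if $\pi'$ is a Maurer-Cartan element of the dgLa $(\frkC^*_\preLie(\g,\g),[\cdot,\cdot]_{\MN},d_\pi)$. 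By Remark \ref{rk:plmc} applied now to $\pi+\pi'$, the vanishing $[\pi+\pi',\pi+\pi']_{\MN}=0$ is precisely the condition that $\pi+\pi'$ defines a pre-Lie algebra structure on $\g$, giving the claimed equivalence.

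There is no substantive obstacle: the only nontrivial ingredient is the graded Jacobi identity for $[\cdot,\cdot]_{\MN}$, which is guaranteed by the preceding theorem that makes $(\frkC^*_\preLie(\g,\g),[\cdot,\cdot]_{\MN})$ a graded Lie algebra. The only point requiring mild care is tracking the degree-$1$ signs when expanding $[\pi+\pi',\pi+\pi']_{\MN}$ to ensure that the cross terms genuinely combine into $2[\pi,\pi']_{\MN}$ rather than cancel; this is where $|\pi|=|\pi'|=1$ is crucially used.
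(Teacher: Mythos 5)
Your proposal is correct and follows exactly the route the paper intends: the theorem is an instance of Theorem \ref{pp:mce} applied to the graded Lie algebra $(\frkC^*_\preLie(\g,\g),[\cdot,\cdot]_{\MN})$ with Maurer-Cartan element $\pi$ (Remark \ref{rk:plmc}), and your expansion of $[\pi+\pi',\pi+\pi']_{\MN}$ and the sign bookkeeping for the degree-$1$ cross terms are accurate. The paper leaves this verification implicit, so your write-up simply supplies the details of the same argument.
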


Representations of pre-Lie algebras can be characterized by a dgLa. Let $(\g,\pi)$ be a pre-Lie algebra, $\rho,\mu$ two linear maps from $\g$ to $\gl(V)$. Define $\bar{\pi},\bar{\rho},\bar{\mu}\in \Hom(\otimes^2(\g\oplus V),\g\oplus V)$ by
$$
\bar{\pi}(x+u,y+v)=\pi(x,y),~\bar{\rho}(x+u,y+v)=\rho(x)v,~\bar{\mu}(x+u,y+v)=\mu(y)u, \quad \forall x,y \in\g, ~u,v\in V.
$$
Then we have
\begin{prop}
With the above notations, $(V;\rho,\mu)$ is a representation of $(\g,\pi)$ if and only if $\bar{\rho}+\bar{\mu}$ is a Maurer-Cartan element of the dgLa $\big(\frkC^*_\preLie(\g\oplus V,\g\oplus V),[\cdot,\cdot]_\NR,d_{\bar{\pi}}=[\bar{\pi},\cdot]_\MN\big)$, i.e.
$$
d_{\bar{\pi}}(\bar{\rho}+\bar{\mu})+\half[\bar{\rho}+\bar{\mu},\bar{\rho}+\bar{\mu}]_{\MN}=0.
$$
\end{prop}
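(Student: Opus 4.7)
The plan is to mimic, almost verbatim, the argument given for the analogous associative-algebra proposition earlier in the paper, replacing the Gerstenhaber bracket by the Matsushima-Nijenhuis bracket and using Remark~\ref{rk:plmc} (Maurer-Cartan characterization of pre-Lie structures) in place of Theorem~\ref{gla-associative}.

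First I would recall the key structural observation: giving a pair of linear maps $\rho,\mu:\g\to\gl(V)$ making $(V;\rho,\mu)$ a representation of the pre-Lie algebra $(\g,\pi)$ is equivalent to saying that the \emph{semidirect product} bilinear operation
\[
(x+u)\ast (y+v) := \pi(x,y)+\rho(x)v+\mu(y)u
\]
defines a pre-Lie algebra structure on $\g\oplus V$. In terms of the extensions $\bar\pi,\bar\rho,\bar\mu\in\Hom(\otimes^2(\g\oplus V),\g\oplus V)$ introduced before the statement, this operation is exactly $\bar\pi+\bar\rho+\bar\mu$; unpacking the pre-Lie axiom on $\g\oplus V$ and separating the four $V$-linear components one recovers precisely the pre-Lie relation on $(\g,\pi)$ together with the representation condition \eqref{eq:repcond2} (and its symmetric partner).

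Next, by Remark~\ref{rk:plmc}, the operation $\bar\pi+\bar\rho+\bar\mu$ is a pre-Lie structure on $\g\oplus V$ if and only if
\[
[\bar\pi+\bar\rho+\bar\mu,\,\bar\pi+\bar\rho+\bar\mu]_{\MN}=0.
\]
Expanding this by bilinearity and graded symmetry of the Matsushima-Nijenhuis bracket gives
\[
[\bar\pi,\bar\pi]_{\MN} + 2[\bar\pi,\bar\rho+\bar\mu]_{\MN} + [\bar\rho+\bar\mu,\bar\rho+\bar\mu]_{\MN}=0.
\]
Now $\bar\pi$ is itself a pre-Lie structure on $\g\oplus V$ (it is the trivial extension of $\pi$, and the pre-Lie identity for $\bar\pi$ is pulled back directly from the pre-Lie identity for $\pi$), so $[\bar\pi,\bar\pi]_{\MN}=0$ again by Remark~\ref{rk:plmc}. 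Dividing the remaining equation by $2$ and using the definition $d_{\bar\pi}=[\bar\pi,\cdot]_{\MN}$ yields
\[
d_{\bar\pi}(\bar\rho+\bar\mu)+\tfrac12[\bar\rho+\bar\mu,\bar\rho+\bar\mu]_{\MN}=0,
\]
which is the Maurer-Cartan equation in the dgLa $(\frkC^*_\preLie(\g\oplus V,\g\oplus V),[\cdot,\cdot]_{\MN},d_{\bar\pi})$. Each step above is reversible, giving the ``if and only if''.

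The only step with any real content is the first one — checking that the semidirect product formula exactly encodes the representation conditions; the rest is a routine algebraic manipulation of a Maurer-Cartan equation in a gLa equipped with a distinguished Maurer-Cartan element, already used in the proof of the associative-algebra analogue. I therefore expect the main (mild) obstacle to be organizing the bookkeeping in this first step: one has to verify that all four types of ternary tests on $\g\oplus V$ (inputs in $\g^{\otimes 3}$, $\g\otimes\g\otimes V$, $\g\otimes V\otimes\g$, $V\otimes\g\otimes\g$) collapse to, respectively, the pre-Lie identity for $\pi$, the representation condition \eqref{eq:repcond2}, and its two variants obtained by swapping the symmetric arguments of the associator; the components with two or three $V$-entries vanish automatically because $\bar\rho$ and $\bar\mu$ land in $V$ and take one $V$-argument each.
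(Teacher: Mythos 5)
Your proof is correct and follows exactly the template the paper uses for the analogous associative-algebra proposition (the paper states the pre-Lie version without proof, but its associative proof is precisely this argument): identify the representation with a semidirect-product pre-Lie structure $\bar\pi+\bar\rho+\bar\mu$ on $\g\oplus V$, invoke the Maurer--Cartan characterization of pre-Lie structures, and expand the bracket using $[\bar\pi,\bar\pi]_{\MN}=0$. The only slight imprecision is in your closing bookkeeping remark: the test with inputs in $\g\otimes\g\otimes V$ yields the condition that $\rho$ is a representation of the sub-adjacent Lie algebra $\g^c$ (not a variant of \eqref{eq:repcond2}), while the tests with one $V$-entry in the first two slots yield \eqref{eq:repcond2}; this does not affect the validity of the argument.
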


At the end of this subsection, we give the relationship between the coboundary operator  of a pre-Lie algebra and the differential $d_\pi$ defined by the Maurer-Cartan element. Assume that $(\g,\cdot_\g)$ is a pre-Lie algebra and set
$\pi(x,y)=x\cdot_\g y$. Consider the operator
$\dM:\frkC^{n-1}_{\preLie}(\g;\g)\longrightarrow \frkC^{n}_{\preLie}(\g;\g)$ associated to the regular representation, then we have
\begin{prop}
For all $f\in \frkC^{n-1}_{\preLie}(\g;\g)$, we have
$$\dM f=(-1)^{n-1}d_\pi f=(-1)^{n-1}[\pi,f]_{\MN},~\forall n=1,2,\cdots.$$
\end{prop}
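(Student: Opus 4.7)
The plan is to verify the identity $\dM f=(-1)^{n-1}[\pi,f]_{\MN}$ by directly expanding the Matsushima-Nijenhuis bracket and comparing, term by term, with the coboundary formula \eqref{eq:pLcoh} specialized to the regular representation (so that $\rho(x)=L_x$ and $\mu(x)=R_x$). Since $\pi\in\frkC^1_\preLie(\g,\g)$ has degree $1$ and $f\in\frkC^{n-1}_\preLie(\g,\g)$ has degree $n-1$, the bracket collapses to $[\pi,f]_{\MN}=\pi\circ f-(-1)^{n-1}f\circ\pi$, and each circle product can be evaluated from \eqref{eq:pLbrac}.

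Unpacking $\pi\circ f$ with $p=1$, $q=n-1$, the shuffle set $\mathbb S_{(n-1,1,0)}$ reduces to $\mathbb S_{(n-1,1)}$, which is parametrized by a single index $j\in\{1,\cdots,n\}$ placed in the regular slot of $f$ with sign $(-1)^{n-j}$; this produces the terms $f(x_1,\cdots,\hat{x}_j,\cdots,x_n,x_j)\cdot_\g x_{n+1}$. The second shuffle set $\mathbb S_{(1,n-1)}$ contributes $x_i\cdot_\g f(x_1,\cdots,\hat{x}_i,\cdots,x_n,x_{n+1})$ weighted by $(-1)^{i-1}$ together with the overall prefactor $(-1)^{pq}=(-1)^{n-1}$.

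Unpacking $f\circ\pi$ with $p=n-1$, $q=1$, the second shuffle sum over $\mathbb S_{(n-1,1)}$ is analogous and yields $f(x_1,\cdots,\hat{x}_j,\cdots,x_n,x_j\cdot_\g x_{n+1})$ weighted by $(-1)^{n-j}$. The crucial step is the first shuffle sum over $\mathbb S_{(1,1,n-2)}$: such a shuffle is specified by an ordered pair $(i,j)=(\sigma(1),\sigma(2))$ with $i\neq j$, and a direct inversion count shows that the sign equals $-(-1)^{i+j}$ when $i<j$ and $+(-1)^{i+j}$ when $i>j$. Splitting the sum according to these two cases and relabelling $(i,j)\mapsto(j,i)$ in the second case, the resulting antisymmetrization collapses to $-\sum_{i<j}(-1)^{i+j}f\bigl([x_i,x_j]_C,x_1,\cdots,\hat{x}_i,\cdots,\hat{x}_j,\cdots,x_n,x_{n+1}\bigr)$, where $[x,y]_C=\pi(x,y)-\pi(y,x)$ is the sub-adjacent Lie bracket.

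Finally, assembling the four groups of terms and multiplying by $(-1)^{n-1}$, the sign identities $(-1)^{n-1}(-1)^{n-j}=(-1)^{j+1}$ and $(-1)^{n-1}(-1)^{i-1}=(-1)^{n+i}$ match each summand of $(-1)^{n-1}[\pi,f]_{\MN}$ with the corresponding summand of $\dM f$ from \eqref{eq:pLcoh}. The main obstacle is the careful sign bookkeeping in the three-block antisymmetrization that produces the $[\cdot,\cdot]_C$ term; once this identification is established, the remaining matches are mechanical.
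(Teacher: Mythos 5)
Your computation is correct, and it is exactly the argument the paper intends: a direct expansion of $[\pi,f]_{\MN}$ via \eqref{eq:pLbrac} and a term-by-term match with \eqref{eq:pLcoh} for the regular representation, precisely mirroring the proof the paper writes out for the associative-algebra analogue (the paper leaves the pre-Lie case unproved). Your sign analysis of the $\mathbb S_{(1,1,n-2)}$-shuffles, which assembles the $[\cdot,\cdot]_C$ term, is the only non-mechanical point and you have it right.
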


\subsection{Leibniz algebras}

Let $\g$ be a vector space. Denote by $\frkC^p_\Leib(\g,\g)=\Hom(\otimes^{p+1}\g,\g)$ and set $\frkC^*_\Leib(\g,\g)=\oplus_{p\in\mathbb N}\frkC^p_\Leib(\g,\g).$ We assume that the degree of an element in $\frkC^p_\Leib(\g,\g)$ is $p$.

\begin{thm} The graded vector space $\frkC^*_\Leib(\g,\g)$ equipped with the \emph{Balavoine bracket}
\begin{equation}\label{leibniz-bracket}
[P,Q]_\B=P\circ Q-(-1)^{pq}Q\circ P, \quad \forall P\in \frkC_\Leib^{p}(\g,\g),Q\in \frkC_\Leib^{q}(\g,\g),
\end{equation}
is a graded Lie algebra, where $P\circ Q\in \frkC_\Leib^{p+q}(\g,\g)$ is defined by
\begin{equation}
P\circ Q=\sum_{k=1}^{p+1}(-1)^{(k-1)q}P\circ_k Q,
\end{equation}
and $P\circ_k Q$ is defined by
\begin{equation*}
\begin{split}
&(P\circ_kQ)(x_1,\cdots,x_{p+q+1})\\
={}& \sum_{\sigma\in\mathbb S_{(k-1,q)}}(-1)^{\sigma}P(x_{\sigma(1)},\cdots,x_{\sigma(k-1)},Q(x_{\sigma(k)},\cdots,x_{\sigma(k+q-1)},x_{k+q}),x_{k+q+1},\cdots,x_{p+q+1}).
\end{split}
\end{equation*}
\end{thm}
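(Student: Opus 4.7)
The plan is to establish the theorem in two steps. First, I would show that the composition $\circ$ endows $\frkC^*_\Leib(\g,\g)$ with the structure of a graded right pre-Lie (right-symmetric) algebra, namely that the associator of $\circ$ is graded symmetric in its last two arguments:
$$(P\circ Q)\circ R - P\circ(Q\circ R) = (-1)^{qr}\bigl[(P\circ R)\circ Q - P\circ(R\circ Q)\bigr]$$
for all homogeneous $P \in \frkC^p_\Leib(\g,\g)$, $Q \in \frkC^q_\Leib(\g,\g)$, $R \in \frkC^r_\Leib(\g,\g)$. Once this is verified, the graded Jacobi identity for $[\cdot,\cdot]_\B = P\circ Q - (-1)^{pq}Q\circ P$ is a purely formal consequence (the standard fact that every graded right pre-Lie algebra is a graded Lie algebra under the graded commutator); graded antisymmetry is built into the definition of $[\cdot,\cdot]_\B$.

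To prove the pre-Lie identity, I would expand $(P\circ Q)\circ R$ and $P\circ(Q\circ R)$ using $\circ = \sum_k (-1)^{(k-1)(\cdot)}\circ_k$ and sort the resulting terms according to where $Q$ and $R$ land inside $P$. Two cases occur: the \emph{parallel} case, in which $Q$ and $R$ are inserted into distinct slots of $P$, and the \emph{nested} case, in which $R$ is inserted into one of the slots used by $Q$. Parallel terms appear in both $(P\circ Q)\circ R$ and $P\circ(Q\circ R)$ and cancel in the associator, leaving only nested contributions in $P\circ(Q\circ R)$. These nested contributions break into two sub-types depending on whether $R$ is inserted into the distinguished ``last'' slot of $Q$ or into one of the other slots. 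The graded symmetry $Q\leftrightarrow R$ then follows by matching each nested term on the left with its $Q\leftrightarrow R$ counterpart on the right, after repackaging shuffles on $\mathbb S_{(k-1,q)}\times\mathbb S_{(k+j-1,r)}$ into shuffles on $\mathbb S_{(k-1,q,r)}$ (and its $Q\leftrightarrow R$ analogue), picking up the sign $(-1)^{qr}$ precisely from reversing the order of two blocks.

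The main obstacle will be the bookkeeping of signs and of the shuffle sums around the distinguished last slot: because $\circ_k$ only shuffles the first $q$ arguments of $Q$ while pinning the last one, the combinatorics differ from the Gerstenhaber and Nijenhuis-Richardson cases and one must carefully separate ``last slot of $Q$'' from ``non-last slot of $Q$'' when $R$ is inserted nestedly, and likewise for the symmetric count on the right-hand side. Every insertion position in the nested case produces a sign of the form $(-1)^{(k-1)(q+r)+\epsilon}$ with an explicit shuffle sign $\epsilon$, and the claim reduces to a compatibility of these signs under the block swap.

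A conceptually cleaner alternative, which I would mention in a remark, is to identify $\frkC^*_\Leib(\g,\g)$ with the graded space of coderivations of the cofree conilpotent Zinbiel coalgebra on $\Sigma\g$ (Zinbiel being the operadic Koszul dual of Leibniz); under this identification $[\cdot,\cdot]_\B$ is the graded commutator of coderivations, so the graded Lie structure is automatic and the Maurer-Cartan elements recover Leibniz brackets on $\g$, in parallel with the associative and Lie cases discussed in the preceding subsections.
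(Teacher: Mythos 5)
The paper itself gives no proof of this theorem -- it is stated as a known result with references to Balavoine, Fialowski--Mandal and Sheng--Tang -- so your proposal has to be judged on its own terms. Your overall strategy is the standard and correct one: show that $\circ$ is a graded \emph{right pre-Lie} product (associator graded-symmetric in its last two arguments) and invoke the fact that the graded commutator of a graded right-symmetric product is a graded Lie bracket; this is consistent with the paper's own remark, in the associative case, that $(\frkC^*_\Ass(\g,\g),\circ)$ is a graded right-symmetric algebra. Your closing remark about coderivations of the cofree conilpotent Zinbiel coalgebra on $\Sigma\g$ is also sound and is essentially Balavoine's conceptual route.

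There is, however, a genuine error in the mechanism you describe for the key step. You assert that the \emph{parallel} terms (where $Q$ and $R$ occupy distinct slots of $P$) appear in both $(P\circ Q)\circ R$ and $P\circ(Q\circ R)$ and cancel, leaving the nested terms, and you then propose to exhibit the $Q\leftrightarrow R$ symmetry on those nested terms. This is backwards. The expression $P\circ(Q\circ R)$ consists \emph{exclusively} of nested terms, since $Q\circ R$ is inserted into $P$ as a single block containing all of $R$'s inputs; it contains no parallel terms at all. In the associator $(P\circ Q)\circ R - P\circ(Q\circ R)$ it is therefore the \emph{nested} terms that cancel and the \emph{parallel} terms of $(P\circ Q)\circ R$ that survive, and the sign $(-1)^{qr}$ arises from swapping the two disjoint insertion blocks in those surviving parallel terms -- which do treat $Q$ and $R$ on an equal footing. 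The nested terms, by contrast, are not $Q\leftrightarrow R$ symmetric (inserting $R$ inside $Q$ bears no symmetry to inserting $Q$ inside $R$), so the matching you describe would not go through. The shuffle-repackaging identities of the form $\mathbb S_{(k-1,q)}\times\mathbb S_{(\cdot,r)}\to\mathbb S_{(k-1,q,r)}$, and the careful separation of the case where $R$ lands in the pinned last slot of $Q$ from the case where it lands in a shuffled slot, are indeed needed -- but they are needed to verify the \emph{cancellation} of the nested terms against $P\circ(Q\circ R)$, not to establish the symmetry. Once the two halves of the argument are reassigned correctly, the proof goes through.
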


\begin{rmk}
We have the compact formula of the Balavoine bracket. Let $\g$ be a vector space. We consider the graded vector space $\frkC^*_\Leib(\g,\g)=\oplus_{p\in\mathbb N}\frkC^p_\Leib(\g,\g)=\oplus_{p\in\mathbb N}\Hom(\otimes^p\g,\gl(\g))$. The Balavoine bracket is given as following:
\small{
\begin{equation*}
\begin{split}
&([P,Q]_\B)(x_1,\cdots,x_{p+q})\\
={}& \sum_{k=1}^{p}(-1)^{(k-1)q}\smashoperator{\sum_{\sigma\in\mathbb S_{(k-1,q)}}}
(-1)^{\sigma}P(x_{\sigma(1)},\cdots,x_{\sigma(k-1)},Q(x_{\sigma(k)},\cdots,x_{\sigma(k+q-1)})x_{k+q},x_{k+q+1},\cdots,x_{p+q})\\
&- (1)^{pq}\sum_{k=1}^{q}(-1)^{(k-1)p}\smashoperator{\sum_{\sigma\in\mathbb S_{(k-1,p)}}}
(-1)^{\sigma}Q(x_{\sigma(1)},\cdots,x_{\sigma(k-1)},P(x_{\sigma(k)},\cdots,x_{\sigma(k+p-1)})x_{k+p},x_{k+p+1},\cdots,x_{p+q})\\
&+ (-1)^{pq}\smashoperator{\sum_{\sigma\in\mathbb S_{(p,q)}}}
(-1)^{\sigma}[P(x_{\sigma(1)},\cdots,x_{\sigma(p)}), Q(x_{\sigma(p+1)},\cdots,x_{\sigma(p+q)})].
\end{split}
\end{equation*}
}
\end{rmk}

See \cite{Balavoine-1,Fialowski,Sheng-Tang} for more details about the Balavoine bracket. In particular, for $\pi\in \frkC_\Leib^{1}(\g,\g)$, we have
\begin{equation*}
[\pi,\pi]_\B(x,y,z)=2(\pi\circ\pi)(x,y,z)=2\big(\pi(\pi(x,y),z)-\pi(x,\pi(y,z))
+\pi(y,\pi(x,z))\big).
\end{equation*}

Thus, $\pi$ defines a Leibniz algebra structure if and only if $[\pi,\pi]_{\B}=0$, i.e.~$\pi$ is a Maurer-Cartan element of the gLa $(\frkC^*_\Leib(\g,\g),[\cdot,\cdot]_\B)$.

Now we construct the dgLa that controls deformations of Leibniz algebras.
\begin{thm}
Let $(\g,\pi)$ be a Leibniz algebra. Then $(\frkC^*_\Leib(\g,\g),[\cdot,\cdot]_{\B},d_\pi)$ is a dgLa, where $d_\pi$ is defined by
\begin{equation}
d_\pi:=[\pi,\cdot]_{\B}.
\end{equation}
Moreover, $\pi+\pi'$ is a Leibniz algebra structure on $\g$, where $\pi'\in \Hom(\g\otimes\g,\g)$, if and only if $\pi'$ is a Maurer-Cartan element of the dgLa $(\frkC^*_\Leib(\g,\g),[\cdot,\cdot]_{\B},d_\pi)$.
\end{thm}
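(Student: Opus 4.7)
The plan is to follow exactly the template laid out in the earlier subsections for associative, Lie, and pre-Lie algebras, since by the previous theorem $\pi$ is a Maurer-Cartan element of the graded Lie algebra $(\frkC^*_\Leib(\g,\g),[\cdot,\cdot]_\B)$. The whole statement can be deduced as a formal consequence of the general Maurer-Cartan principle recorded in Theorem~\ref{pp:mce}.

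First I would check that $d_\pi=[\pi,\cdot]_\B$ is indeed a differential on the graded vector space $\frkC^*_\Leib(\g,\g)$. Since $(\frkC^*_\Leib(\g,\g),[\cdot,\cdot]_\B)$ is a graded Lie algebra, the graded Jacobi identity applied to $\pi,\pi,f$ gives
\begin{equation*}
2[\pi,[\pi,f]_\B]_\B=[[\pi,\pi]_\B,f]_\B,
\end{equation*}
and the Leibniz hypothesis on $(\g,\pi)$ says $[\pi,\pi]_\B=0$, so $d_\pi\circ d_\pi=0$. The same Jacobi identity, applied to $\pi$ and two arbitrary elements $P,Q\in\frkC^*_\Leib(\g,\g)$, shows that $d_\pi$ is a graded derivation of $[\cdot,\cdot]_\B$. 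This establishes that $(\frkC^*_\Leib(\g,\g),[\cdot,\cdot]_\B,d_\pi)$ is a dgLa.

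Second, for the Maurer-Cartan characterization, I would simply expand $[\pi+\pi',\pi+\pi']_\B$ using bilinearity and graded symmetry of the Balavoine bracket on degree-$1$ elements:
\begin{equation*}
[\pi+\pi',\pi+\pi']_\B=[\pi,\pi]_\B+2[\pi,\pi']_\B+[\pi',\pi']_\B.
\end{equation*}
Because $[\pi,\pi]_\B=0$, the vanishing of the left-hand side is equivalent to
\begin{equation*}
d_\pi\pi'+\tfrac12[\pi',\pi']_\B=0,
\end{equation*}
which is exactly the Maurer-Cartan equation in the dgLa $(\frkC^*_\Leib(\g,\g),[\cdot,\cdot]_\B,d_\pi)$. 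Combined with the fact recorded just before this subsection that $\pi+\pi'\in\frkC^1_\Leib(\g,\g)$ is a Leibniz multiplication if and only if $[\pi+\pi',\pi+\pi']_\B=0$, this gives the claimed equivalence.

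There is essentially no serious obstacle: everything reduces to graded Jacobi and bilinearity, and the analogous proofs for the associative, Lie, and pre-Lie cases in the preceding subsections go through verbatim after replacing $[\cdot,\cdot]_\G$, $[\cdot,\cdot]_\NR$, or $[\cdot,\cdot]_\MN$ with $[\cdot,\cdot]_\B$. The only point that requires a moment of care is verifying the sign in the expansion $[\pi+\pi',\pi+\pi']_\B=[\pi,\pi]_\B+2[\pi,\pi']_\B+[\pi',\pi']_\B$, which uses that $\pi$ and $\pi'$ are both of odd degree $1$ so that $[\pi,\pi']_\B=[\pi',\pi]_\B$; this is the standard graded-symmetric identity for odd elements and poses no difficulty.
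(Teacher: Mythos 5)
Your proof is correct and follows exactly the route the paper intends: the statement is a direct instance of Theorem~\ref{pp:mce} applied to the graded Lie algebra $(\frkC^*_\Leib(\g,\g),[\cdot,\cdot]_\B)$, using the fact recorded just before the theorem that $\pi$ is a Leibniz structure if and only if $[\pi,\pi]_\B=0$. The graded Jacobi computation for $d_\pi^2=0$ and the expansion of $[\pi+\pi',\pi+\pi']_\B$ are both handled correctly, including the sign check $[\pi,\pi']_\B=[\pi',\pi]_\B$ for odd-degree elements.
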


Representations of Leibniz algebras can be characterized by a dgLa. Let $(\g,\pi)$ be a Leibniz algebra, $\rho^L,\rho^R$ two linear maps from $\g$ to $\gl(V)$. Define $\bar{\pi},\bar{\rho}^L,\bar{\rho}^R\in \Hom(\otimes^2(\g\oplus V),\g\oplus V)$ by
$$
\bar{\pi}(x+u,y+v)=\pi(x,y),\quad\bar{\rho}^L(x+u,y+v)=\rho^L(x)v,\quad \bar{\rho}^R(x+u,y+v)=\rho^R(y)u,
$$
for all $x,y \in\g, ~u,v\in V.$ Then we have
\begin{prop}
With the above notations, $(V;\rho^L,\rho^R)$ is a representation of $(\g,\pi)$ if and only if $\bar{\rho}^L+\bar{\rho}^R$ is a Maurer-Cartan element of the dgLa $\big(\frkC^*_\Leib(\g\oplus V,\g\oplus V),[\cdot,\cdot]_\B,d_{\bar{\pi}}=[\bar{\pi},\cdot]_\B\big)$.
\end{prop}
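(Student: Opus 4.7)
The plan is to mimic the strategy used earlier for associative and Lie algebras, reducing the statement to a Maurer--Cartan computation on the semidirect sum $\g\oplus V$. Concretely, first I would observe that by the very construction of $\bar\pi$, $\bar\rho^L$ and $\bar\rho^R$, the sum $\bar\pi+\bar\rho^L+\bar\rho^R\in\Hom(\otimes^2(\g\oplus V),\g\oplus V)$ gives, on $\g\oplus V$, the candidate bracket
\[
[x+u,y+v]=\pi(x,y)+\rho^L(x)v+\rho^R(y)u.
\]
I would then check that $(V;\rho^L,\rho^R)$ is a representation of $(\g,\pi)$ if and only if this bracket satisfies the Leibniz identity. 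This amounts to evaluating the Leibniz relation on mixed inputs in $\g$ and $V$: the purely $\g$-valued case recovers the Leibniz identity for $\pi$ (already assumed), and the three cases with exactly one $V$-entry produce precisely the axioms \eqref{rep-1}, \eqref{rep-2}, \eqref{rep-3}. Degenerate cases with two or three $V$-entries vanish automatically because the operation is identically zero on $V\otimes V\to V$.

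Next I would translate this into the language of the Balavoine bracket. By Theorem 3 (the Leibniz analogue of Theorem~\ref{gla-associative}), the bracket $\bar\pi+\bar\rho^L+\bar\rho^R$ is a Leibniz algebra structure on $\g\oplus V$ iff
\[
[\bar\pi+\bar\rho^L+\bar\rho^R,\ \bar\pi+\bar\rho^L+\bar\rho^R]_\B=0.
\]
Expanding the bracket bilinearly and using the graded symmetry $[A,B]_\B=-(-1)^{|A||B|}[B,A]_\B$ (with all three elements in degree $1$) gives
\[
[\bar\pi,\bar\pi]_\B+2[\bar\pi,\bar\rho^L+\bar\rho^R]_\B+[\bar\rho^L+\bar\rho^R,\bar\rho^L+\bar\rho^R]_\B=0.
\]
Since $(\g,\pi)$ is a Leibniz algebra, its extension $\bar\pi$ on $\g\oplus V$ is also Leibniz (the $V$-slot being trivially preserved), so $[\bar\pi,\bar\pi]_\B=0$. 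What remains is exactly
\[
d_{\bar\pi}(\bar\rho^L+\bar\rho^R)+\tfrac{1}{2}[\bar\rho^L+\bar\rho^R,\bar\rho^L+\bar\rho^R]_\B=0,
\]
which is the Maurer--Cartan equation in the dgLa $(\frkC^*_\Leib(\g\oplus V,\g\oplus V),[\cdot,\cdot]_\B,d_{\bar\pi})$.

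The main obstacle is the first step, identifying the representation axioms with the Leibniz identity for the semidirect bracket. This is essentially a bookkeeping exercise but must be done carefully because the three representation axioms \eqref{rep-1}--\eqref{rep-3} correspond to three distinct placements of the single $V$-element among the three slots of the Leibniz identity, and one must verify that the zero extensions $\bar\rho^L(x+u,y+v)=\rho^L(x)v$, $\bar\rho^R(x+u,y+v)=\rho^R(y)u$ pick out the correct terms in each case. The subsequent algebraic manipulation and the cancellation $[\bar\pi,\bar\pi]_\B=0$ are then routine.
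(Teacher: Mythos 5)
Your proposal is correct and follows exactly the strategy the paper uses (it proves the associative-algebra analogue this way and leaves the Leibniz case to the reader): identify the representation axioms with the Leibniz identity for the semidirect bracket $\bar\pi+\bar\rho^L+\bar\rho^R$, then expand $[\bar\pi+\bar\rho^L+\bar\rho^R,\bar\pi+\bar\rho^L+\bar\rho^R]_\B=0$ and cancel $[\bar\pi,\bar\pi]_\B=0$ to obtain the Maurer--Cartan equation. Your verification that the three placements of the single $V$-entry recover \eqref{rep-1}--\eqref{rep-3} is the only substantive computation, and it is done correctly.
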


At the end of this subsection, we give the relationship between the coboundary operator of a Leibniz algebra and the differential $d_\pi$ defined by the Maurer-Cartan element. Assume that $(\g,[\cdot,\cdot]_\g)$ is a Leibniz algebra and set
$\pi(x,y)=[x,y]_\g$. Consider the operator
$\dM:\frkC^{n-1}_{\Leib}(\g;\g)\longrightarrow \frkC^{n}_{\Leib}(\g;\g)$ associated to the regular representation, then we have
\begin{prop}
For all $f\in \frkC^{n-1}_{\Leib}(\g;\g)$, we have $$\dM f=(-1)^{n-1}d_\pi f=(-1)^{n-1}[\pi,f]_{\B},~\forall n=1,2,\cdots.$$
\end{prop}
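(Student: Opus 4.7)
The plan is to expand $[\pi,f]_\B$ directly from the formula \eqref{leibniz-bracket}, identify each resulting piece with one of the three terms in the Leibniz coboundary operator associated to the regular representation, and check the overall sign $(-1)^{n-1}$. Since $\pi$ has degree $1$ and $f$ has degree $n-1$, we get
\[
[\pi,f]_\B = \pi\circ f - (-1)^{n-1}\,f\circ \pi,
\]
so the task reduces to separately computing $\pi\circ f$ and $f\circ\pi$ from the defining formulas.

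First I would compute $\pi\circ f = \pi\circ_1 f + (-1)^{n-1}\pi\circ_2 f$. The term $\pi\circ_1 f$ involves only the trivial shuffle in $\mathbb S_{(0,n-1)}$ and contributes $\pi(f(x_1,\dots,x_n),x_{n+1}) = \rho^R(x_{n+1})f(x_1,\dots,x_n)$, matching the $\rho^R$-term in $\dM f$ up to sign. The term $\pi\circ_2 f$ sums over $\mathbb S_{(1,n-1)}$; a shuffle here is determined by the choice of $\sigma(1)=i\in\{1,\dots,n\}$ with sign $(-1)^{i-1}$, yielding $\sum_{i=1}^n(-1)^{i-1}\pi(x_i,f(x_1,\dots,\hat x_i,\dots,x_n,x_{n+1}))$, i.e.\ the $\rho^L$-sum.

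Next I would compute $f\circ\pi = \sum_{k=1}^{n}(-1)^{k-1} f\circ_k\pi$. Each $f\circ_k\pi$ sums over $\mathbb S_{(k-1,1)}$, where a shuffle is determined by $\sigma(k)=i\in\{1,\dots,k\}$ with sign $(-1)^{k-i}$. Combining signs gives
\[
(f\circ\pi)(x_1,\dots,x_{n+1}) = \sum_{k=1}^{n}\sum_{i=1}^{k}(-1)^{i-1}\,f(x_1,\dots,\hat x_i,\dots,x_k,[x_i,x_{k+1}]_\g,x_{k+2},\dots,x_{n+1}),
\]
and re-indexing with $j=k+1$ rewrites this as $\sum_{1\le i<j\le n+1}(-1)^{i-1} f(x_1,\dots,\hat x_i,\dots,x_{j-1},[x_i,x_j]_\g,x_{j+1},\dots,x_{n+1})$.

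Finally, multiplying $[\pi,f]_\B$ by $(-1)^{n-1}$ converts each $(-1)^{i-1}$ in the $\pi\circ_2 f$ contribution to $(-1)^{i+1}$, turns the prefactor $(-1)^{n-1}$ in front of $\rho^R(x_{n+1})f(x_1,\dots,x_n)$ into $(-1)^{n+1}$, and flips the sign of the bracket terms from $-(-1)^{i-1}$ to $(-1)^i$. All three resulting summands coincide term-by-term with the Leibniz coboundary formula applied to the regular representation (where $\rho^L=L$ and $\rho^R=R$), proving $\dM f = (-1)^{n-1}[\pi,f]_\B$. The only real obstacle is sign bookkeeping: correctly identifying the sign of a $(k-1,1)$-shuffle as $(-1)^{k-i}$ and of a $(1,n-1)$-shuffle as $(-1)^{i-1}$, and then tracking how these combine with the $(-1)^{(k-1)q}$ and $(-1)^{pq}$ factors in the definition of the Balavoine bracket.
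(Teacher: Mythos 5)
Your computation is correct: the signs of the $(1,n-1)$- and $(k-1,1)$-shuffles are $(-1)^{i-1}$ and $(-1)^{k-i}$ as you claim, and combining them with the factors $(-1)^{(k-1)q}$ and $(-1)^{pq}$ in the Balavoine bracket reproduces exactly the three summands of the Leibniz coboundary for the regular representation. This is the same direct term-by-term expansion the paper carries out for the analogous associative-algebra proposition (the Leibniz case is left unproved there), so your argument matches the intended proof.
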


\subsection{$3$-Lie algebras}

In \cite{NR bracket of n-Lie}, the author constructed a graded Lie algebra structure by which one can describe an $n$-Leibniz algebra structure as a Maurer-Cartan element. Here, we give the precise formulas for the 3-Lie algebra case.

Set $\frkC^p_{3\mbox{-}\Lie}(\g,\g)=\Hom(\underbrace{\wedge^2\g\otimes{\dotsb}\otimes\wedge^2\g}_{p}\wedge\g,\g)$ and $\frkC^*_{3\mbox{-}\Lie}(\g,\g)=\oplus_{p}\frkC^{p}_{3\mbox{-}\Lie}(\g,\g)$.

Let $P\in \frkC^p_{3\mbox{-}\Lie}(\g,\g),~Q\in \frkC^q_{3\mbox{-}\Lie}(\g,\g),\, p,q\geq 0$. Let $\frkX_i=x_i\wedge y_i\in \wedge^2\g$ for $i=1,2,\cdots,p+q$ and $x_i,y_i\in\g$.

\begin{thm}{\rm (\cite{NR bracket of n-Lie,Song-Tang-Makhlouf})}\label{thm:gradelie}
The graded vector space $\frkC_{3\mbox{-}\Lie}(\g,\g)$ equipped with the graded commutator bracket
\begin{equation}
[P,Q]_{3\mbox{-}\Lie}= P\circ Q-(-1)^{pq}Q\circ P,
\end{equation}
is a graded Lie algebra, where $P\circ Q\in \frkC^{p+q}_{3\mbox{-}\Lie}(\g,\g)$ is defined by
{\footnotesize
\begin{equation*}
\begin{split}
&(P\circ Q)(\frkX_1,\cdots,\frkX_{p+q},x)\\
={}& \sum_{k=1}^{p}(-1)^{(k-1)q}\smashoperator{\sum_{\sigma\in \mathbb S_{(k-1,q)}}}
(-1)^{\sigma}P\left(\frkX_{\sigma(1)},\cdots,\frkX_{\sigma(k-1)},Q(\frkX_{\sigma(k)},\cdots,\frkX_{\sigma(k+q-1)},x_{k+q})\wedge y_{k+q},\frkX_{k+q+1},\cdots,\frkX_{p+q},x\right)\\
&+ \sum_{k=1}^{p}(-1)^{(k-1)q}\smashoperator{\sum_{\sigma\in \mathbb S_{(k-1,q)}}}
(-1)^{\sigma}P\left(\frkX_{\sigma(1)},\cdots,\frkX_{\sigma(k-1)},x_{k+q}\wedge Q(\frkX_{\sigma(k)},\cdots,\frkX_{\sigma(k+q-1)},y_{k+q}) ,\frkX_{k+q+1},\cdots,\frkX_{p+q},x\right)\\
&+ \smashoperator{\sum_{\sigma\in \mathbb S_{(p,q)}}}
(-1)^{pq}(-1)^{\sigma}P\left(\frkX_{\sigma(1)},\cdots,\frkX_{\sigma(p)},Q(\frkX_{\sigma(p+1)},\cdots,\frkX_{\sigma(p+q-1)}, \frkX_{\sigma(p+q)},x)\right).
\end{split}
\end{equation*}
}
Moreover,  $\pi:\wedge^3\g\longrightarrow\g$ is a $3$-Lie bracket if and only if $[\pi,\pi]_{3\mbox{-}\Lie}=0$, i.e.~$\pi$ is a Maurer-Cartan element of the graded Lie algebra $(\frkC^*_{3\mbox{-}\Lie}(\g,\g),[\cdot,\cdot]_{3\mbox{-}\Lie})$.
\end{thm}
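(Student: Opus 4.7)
The plan has two parts: first establish the graded Lie algebra structure on $\frkC^*_{3\mbox{-}\Lie}(\g,\g)$, then translate the Fundamental Identity~\eqref{eq:de1} into a Maurer-Cartan equation. For the first part, I would follow the template used by Gerstenhaber for associative algebras and by Balavoine for Leibniz algebras: show that the composition product $\circ$ equips $\frkC^*_{3\mbox{-}\Lie}(\g,\g)$ with a \emph{graded pre-Lie structure}, meaning that the associator
\[
(P,Q,R):=(P\circ Q)\circ R-P\circ(Q\circ R)
\]
is graded-symmetric in its last two arguments up to the Koszul sign $(-1)^{qr}$. A standard formal argument then shows that the skew-symmetrization $[P,Q]_{3\mbox{-}\Lie}=P\circ Q-(-1)^{pq}Q\circ P$ of any graded pre-Lie product automatically satisfies graded skew-symmetry (immediate from the definition) and the graded Jacobi identity.

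To verify the pre-Lie relation, I would expand both $(P\circ Q)\circ R$ and $P\circ(Q\circ R)$ using the three families of terms in the defining formula for $\circ$. Each side produces a sum of nested substitutions indexed by shuffles; whenever $R$ is plugged into an argument slot of $P$ disjoint from the one occupied by $Q$, the corresponding contributions telescope and cancel between the two expressions, exactly as in the Gerstenhaber and Balavoine cases recorded in the previous subsections. The surviving contributions are those in which $R$ is substituted into an argument of $Q$; these are manifestly symmetric in the pair $(Q,R)$ up to the sign $(-1)^{qr}$, establishing the graded pre-Lie identity.

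For the Maurer-Cartan characterization, I would compute $\half[\pi,\pi]_{3\mbox{-}\Lie}=\pi\circ\pi$ directly on a triple $(\frkX_1,\frkX_2,z)$ with $\frkX_i=x_i\wedge y_i$. The first of the three families in the $\circ$-formula contributes $[[x_1,y_1,x_2]_\g,y_2,z]_\g$, the second contributes $[x_2,[x_1,y_1,y_2]_\g,z]_\g$, and the third (summed over the two shuffles in $\mathbb S_{(1,1)}$, with overall prefactor $(-1)^{pq}=-1$) contributes $-[x_1,y_1,[x_2,y_2,z]_\g]_\g+[x_2,y_2,[x_1,y_1,z]_\g]_\g$. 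Up to an overall sign, the sum of these four terms is exactly $\sfF_{x_1,y_1,x_2,y_2,z}$ from~\eqref{eq:de1}, so $[\pi,\pi]_{3\mbox{-}\Lie}=0$ is equivalent to the Fundamental Identity.

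The main obstacle is the combinatorial bookkeeping in the pre-Lie verification: the two distinct wedge slots in the first two families of $\circ$-terms, combined with the shuffles in $\mathbb S_{(k-1,q)}$ and $\mathbb S_{(p,q)}$, produce a rich pattern of cancellations. I would organize the cases according to which slot of $P$ receives $Q$ and which slot of $P$ or $Q$ receives $R$, and use the skew-symmetry within each $\wedge^2\g$ factor to merge paired contributions.
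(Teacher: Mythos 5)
The paper itself offers no proof of this theorem --- it is quoted from the cited references --- so your proposal has to stand on its own rather than be compared line-by-line with anything in the text. Your second half is correct and essentially complete as a computation: for $p=q=1$ the three families of the $\circ$-formula give
$(\pi\circ\pi)(\frkX_1,\frkX_2,z)=[[x_1,y_1,x_2]_\g,y_2,z]_\g+[x_2,[x_1,y_1,y_2]_\g,z]_\g-[x_1,y_1,[x_2,y_2,z]_\g]_\g+[x_2,y_2,[x_1,y_1,z]_\g]_\g=-\sfF_{x_1,y_1,x_2,y_2,z}$,
and since $[\pi,\pi]_{3\mbox{-}\Lie}=2\,\pi\circ\pi$ and the skew-symmetry of $\pi$ is already built into $\frkC^1_{3\mbox{-}\Lie}(\g,\g)=\Hom(\wedge^3\g,\g)$, vanishing of $[\pi,\pi]_{3\mbox{-}\Lie}$ is indeed equivalent to the Fundamental Identity.

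The first half has a genuine problem: you state the cancellation mechanism of the pre-Lie argument backwards. In the associator $(P\circ Q)\circ R-P\circ(Q\circ R)$, the terms of $(P\circ Q)\circ R$ in which $R$ is substituted into an argument of $Q$ are precisely the ones that cancel against $P\circ(Q\circ R)$; the terms in which $Q$ and $R$ occupy disjoint slots of $P$ survive and are the ones that must be shown symmetric in $(Q,R)$ up to $(-1)^{qr}$. As written (``disjoint slots cancel, $R$-inside-$Q$ survives and is manifestly symmetric'') the argument cannot be carried out --- the $R$-inside-$Q$ terms are visibly not symmetric under exchanging $Q$ and $R$. Moreover, once this is corrected, the $3$-Lie case produces a class of surviving terms with no analogue in the Gerstenhaber or Balavoine templates you invoke: those in which $Q$ is inserted into the $x$-half and $R$ into the $y$-half of the \emph{same} $\wedge^2\g$-slot of $P$ (these arise in $(P\circ Q)\circ R$ because after the $R$-insertion that slot still has the form $u\wedge v$, so $Q$ can subsequently be inserted into the other half). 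Such terms do not cancel against $P\circ(Q\circ R)$ and must be paired, signs included, with their $Q\leftrightarrow R$ mirror images; this is the real content of the verification and needs to be done explicitly. A tempting shortcut, suggested by the map $\Psi$ into $\big(\frkC^*_\Leib(\wedge^2\g,\wedge^2\g),[\cdot,\cdot]_{\B}\big)$ stated right after the theorem, is to transport the Jacobi identity along $\Psi$; but $\Psi$ is not injective in general (for instance a traceless endomorphism of a two-dimensional $\g$ lies in its kernel in degree $0$), so that route also requires additional care.
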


Define a graded linear map $\Psi:\frkC^*_{3\mbox{-}\Lie}(\g,\g)\lon\frkC^*_\Leib(\wedge^2\g,\wedge^2\g)$ of degree $0$ by
\begin{equation*}
\begin{split}
&\Psi(P)(\frkX_1,\cdots,\frkX_{p},x_{p+1}\wedge y_{p+1})\\
={}& P(\frkX_1,\cdots,\frkX_{p},x_{p+1})\wedge y_{p+1}+x_{p+1}\wedge P(\frkX_1,\cdots,\frkX_{p},y_{p+1}),
\quad \forall P\in\frkC^{p}_{3\mbox{-}\Lie}(\g,\g),\quad p\geq 0.
\end{split}
\end{equation*}

We have
\begin{thm}
With the above notations, Then $\Phi$ is a
homomorphism of graded Lie algebras from $(\frkC^*_{3\mbox{-}\Lie}(\g,\g),[\cdot,\cdot]_{3\mbox{-}\Lie})$ to $\big(\frkC^*_\Leib(\wedge^2\g,\wedge^2\g),[\cdot,\cdot]_{\B}\big)$.
\end{thm}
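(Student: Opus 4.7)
Both graded Lie brackets are defined as graded commutators of pre-Lie-type circle products, so it suffices to prove the stronger statement that $\Psi$ intertwines the circle products, that is,
\begin{equation*}
\Psi(P\circ Q)=\Psi(P)\circ\Psi(Q)\quad\text{for all }P\in\frkC^p_{3\mbox{-}\Lie}(\g,\g),\ Q\in\frkC^q_{3\mbox{-}\Lie}(\g,\g).
\end{equation*}
Graded commutation then automatically gives $\Psi([P,Q]_{3\mbox{-}\Lie})=[\Psi(P),\Psi(Q)]_\B$. The remaining content is a combinatorial identity between the defining formula of the Leibniz circle product on $\wedge^2\g$ and the three-summand expansion of the 3-Lie circle product.

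\textbf{Step 1: unfold the left-hand side.} Apply $\Psi(P\circ Q)$ to $(\frkX_1,\ldots,\frkX_{p+q},x\wedge y)$. By the definition of $\Psi$ (the derivation rule on the last slot), one gets
\begin{equation*}
(P\circ Q)(\frkX_1,\ldots,\frkX_{p+q},x)\wedge y\;+\;x\wedge(P\circ Q)(\frkX_1,\ldots,\frkX_{p+q},y).
\end{equation*}
Now expand each $P\circ Q$ using Theorem~\ref{thm:gradelie}: three groups of terms appear, according to whether $Q$ is inserted in a non-final slot of $P$ and acts on the $x_{k+q}$-leg, a non-final slot of $P$ and acts on the $y_{k+q}$-leg, or the final slot of $P$.

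\textbf{Step 2: unfold the right-hand side.} Now apply $\Psi(P)\circ\Psi(Q)$ to the same input, using the Leibniz circle product: for each $k=1,\ldots,p+1$ and each shuffle $\sigma\in\mathbb S_{(k-1,q)}$, one inserts $\Psi(Q)(\frkX_{\sigma(k)},\ldots,\frkX_{\sigma(k+q-1)},\frkX_{k+q})\in\wedge^2\g$ into position $k$ of $\Psi(P)$. Crucially, by definition of $\Psi$ the value of $\Psi(Q)$ on a $\wedge^2\g$-valued last argument $\frkX_{k+q}=x_{k+q}\wedge y_{k+q}$ splits as a sum of two simple 2-vectors
\begin{equation*}
Q(\ldots,x_{k+q})\wedge y_{k+q}\;+\;x_{k+q}\wedge Q(\ldots,y_{k+q}),
\end{equation*}
and the value of the outer $\Psi(P)$ on the final argument $x\wedge y$ likewise splits into $(\cdot)\wedge y+x\wedge(\cdot)$.

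\textbf{Step 3: match.} Compare the three groups from Step 1 with the $k\le p$ and $k=p+1$ contributions from Step 2. The $k\le p$ contributions in Step 2 correspond, after the two-term splitting of $\Psi(Q)(\ldots,\frkX_{k+q})$, exactly to the first two groups of terms in Step 1 (one for acting on $x_{k+q}$ and one for $y_{k+q}$), and the shuffle set $\mathbb S_{(k-1,q)}$ with signs matches on the nose. The $k=p+1$ contribution, combined with the final two-term splitting from the outer $\Psi(P)$ acting on $x\wedge y$, reproduces the third group — with the shuffle set $\mathbb S_{(p,q)}$ and the sign $(-1)^{pq}$ appearing identically in both the 3-Lie formula and the Leibniz formula. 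This gives the desired equality.

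\textbf{Main obstacle.} The only real difficulty is bookkeeping: keeping the shuffle signs and the two-term $\Psi$-splittings straight so that the three groups in the 3-Lie circle product are in bijection, with matching signs, to the contributions of the Leibniz circle product combined with the derivation rule of $\Psi$. Once one writes out the two sides carefully and tracks which $(k,\sigma)$-summand on the Leibniz side corresponds to which term on the 3-Lie side (including the $x$-vs-$y$ split), the identification is forced. No new idea is needed beyond the derivation-like definition of $\Psi$ on the final argument and the fact that the bracket $[\cdot,\cdot]_\sfF$ on $\wedge^2\g$ is itself defined by the same derivation rule, which is the degree-$0$ shadow of the identity we are proving.
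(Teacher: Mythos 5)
Your reduction to the identity $\Psi(P\circ Q)=\Psi(P)\circ\Psi(Q)$ is where the argument breaks: that identity is \emph{false}, and the failure is located precisely in the $k=p+1$ term you treat in Step 3. Write $\frkX_{p+q+1}=x\wedge y$ for the last argument. On the Leibniz side the $k=p+1$ summand inserts $\Psi(Q)(\frkX_{\sigma(p+1)},\dots,\frkX_{\sigma(p+q)},\frkX_{p+q+1})=Q(\dots,x)\wedge y+x\wedge Q(\dots,y)$ into the \emph{last} slot of $\Psi(P)$; since $\Psi(P)$ also acts on its last slot by the derivation rule, each shuffle contributes \emph{four} terms,
\[
P(\dots,Q(\dots,x))\wedge y+Q(\dots,x)\wedge P(\dots,y)+P(\dots,x)\wedge Q(\dots,y)+x\wedge P(\dots,Q(\dots,y)),
\]
whereas applying $\Psi$ to the third group of the $3$-Lie circle product yields only the first and the fourth. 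The two cross terms $Q(\dots,x)\wedge P(\dots,y)+P(\dots,x)\wedge Q(\dots,y)$ have no counterpart in $\Psi(P\circ Q)$: this is the familiar fact that the composite of two derivation-like operators is not derivation-like. (Your Steps 1--2 and the matching for the insertions with $k\le p$ are fine, since there $\Psi(Q)$'s output lands in a $\wedge^2\g$-slot of $P$, where $\Psi(P)$ is simply linear.) So $\Psi$ is \emph{not} a morphism for the circle (pre-Lie) products, and the ``stronger statement'' cannot be used.

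The theorem still holds because these cross terms are symmetric under exchanging $P$ and $Q$, and reindexing $(q,p)$-shuffles as $(p,q)$-shuffles costs exactly the sign $(-1)^{pq}$; hence they cancel between $\Psi(P)\circ\Psi(Q)$ and $(-1)^{pq}\Psi(Q)\circ\Psi(P)$ in the graded commutator. Equivalently, they assemble into the operator-commutator term $(-1)^{pq}\sum_{\sigma\in\mathbb S_{(p,q)}}(-1)^{\sigma}\bigl[\Psi(P)(\frkX_{\sigma(1)},\dots,\frkX_{\sigma(p)}),\Psi(Q)(\frkX_{\sigma(p+1)},\dots,\frkX_{\sigma(p+q)})\bigr]$ of the compact formula for the Balavoine bracket. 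This is exactly how the paper's computation proceeds: it expands $\Psi([P,Q]_{3\mbox{-}\Lie})$ in full on $(\frkX_1,\dots,\frkX_{p+q},\frkX_{p+q+1})$, regroups the non-final insertions into $\Psi(P)$ and $\Psi(Q)$ as in your Steps 1--2, and then matches the final-slot contributions of $P\circ Q$ and $Q\circ P$ \emph{jointly} against that commutator term. To repair your proof you must abandon the circle-product claim and carry out the bookkeeping at the level of the brackets, verifying the cancellation of the cross terms explicitly.
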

\emptycomment{
\begin{proof}
For $P\in\frkC^{p}_{3\mbox{-}\Lie}(\g,\g),~Q\in\frkC^{q}_{3\mbox{-}\Lie}(\g,\g)$ and $\frkX_1,\dotsc,\frkX_{p+q+1}\in\wedge^2\g$, here $\frkX_i=x_i\wedge y_i$, we have
{\footnotesize
\begin{eqnarray*}
\nonumber&&\Psi \big([P,Q]_{3\mbox{-}\Lie}\big)(\frkX_1,\dotsc,\frkX_{p+q})\frkX_{p+q+1}\\
\nonumber&=&[P,Q]_{3\mbox{-}\Lie}(\frkX_1,\dotsc,\frkX_{p+q},x_{p+q+1})\wedge y_{p+q+1}+x_{p+q+1}\wedge [P,Q]_{3\mbox{-}\Lie}(\frkX_1,\dotsc,\frkX_{p+q},y_{p+q+1})\\
&=&\sum_{k=1}^{p}(-1)^{(k-1)q}\sum_{\sigma\in \mathbb S_{(k-1,q)}}(-1)^{\sigma}P\Big(\frkX_{\sigma(1)},\dotsc,\frkX_{\sigma(k-1)},Q(\frkX_{\sigma(k)},\dotsc,\frkX_{\sigma(k+q-1)},x_{k+q})\wedge y_{k+q},\frkX_{k+q+1},\dotsc,\frkX_{p+q},x_{p+q+1}\Big)\wedge y_{p+q+1}\\
&&+\sum_{k=1}^{p}(-1)^{(k-1)q}\sum_{\sigma\in \mathbb S_{(k-1,q)}}(-1)^{\sigma}P\Big(\frkX_{\sigma(1)},\dotsc,\frkX_{\sigma(k-1)},x_{k+q}\wedge Q(\frkX_{\sigma(k)},\dotsc,\frkX_{\sigma(k+q-1)},y_{k+q}) ,\frkX_{k+q+1},\dotsc,\frkX_{p+q},x_{p+q+1}\Big)\wedge y_{p+q+1}\\
&&+\sum_{\sigma\in \mathbb S_{(p,q)}}(-1)^{pq}(-1)^{\sigma}P\Big(\frkX_{\sigma(1)},\dotsc,\frkX_{\sigma(p)},Q(\frkX_{\sigma(p+1)},\dotsc,\frkX_{\sigma(p+q-1)}, \frkX_{\sigma(p+q)},x_{p+q+1})\Big)\wedge y_{p+q+1}\\
&&-(-1)^{pq}\sum_{k=1}^{p}(-1)^{(k-1)p}\sum_{\sigma\in \mathbb S_{(k-1,p)}}(-1)^{\sigma}Q\Big(\frkX_{\sigma(1)},\dotsc,\frkX_{\sigma(k-1)},P(\frkX_{\sigma(k)},\dotsc,\frkX_{\sigma(k+p-1)},x_{k+p})\wedge y_{k+p},\frkX_{k+p+1},\dotsc,\frkX_{p+q},x_{p+q+1}\Big)\wedge y_{p+q+1}\\
&&-(-1)^{pq}\sum_{k=1}^{p}(-1)^{(k-1)p}\sum_{\sigma\in \mathbb S_{(k-1,p)}}(-1)^{\sigma}Q\Big(\frkX_{\sigma(1)},\dotsc,\frkX_{\sigma(k-1)},x_{k+p}\wedge P(\frkX_{\sigma(k)},\dotsc,\frkX_{\sigma(k+p-1)},y_{k+p}) ,\frkX_{k+p+1},\dotsc,\frkX_{p+q},x_{p+q+1}\Big)\wedge y_{p+q+1}\\
&&-(-1)^{pq}\sum_{\sigma\in \mathbb S_{(q,p)}}(-1)^{pq}(-1)^{\sigma}Q\Big(\frkX_{\sigma(1)},\dotsc,\frkX_{\sigma(q)},P(\frkX_{\sigma(q+1)},\dotsc,\frkX_{\sigma(p+q-1)}, \frkX_{\sigma(p+q)},x_{p+q+1})\Big)\wedge y_{p+q+1}\\
&&+\sum_{k=1}^{p}(-1)^{(k-1)q}\sum_{\sigma\in \mathbb S_{(k-1,q)}}(-1)^{\sigma}x_{p+q+1}\wedge P\Big(\frkX_{\sigma(1)},\dotsc,\frkX_{\sigma(k-1)},Q(\frkX_{\sigma(k)},\dotsc,\frkX_{\sigma(k+q-1)},x_{k+q})\wedge y_{k+q},\frkX_{k+q+1},\dotsc,\frkX_{p+q},y_{p+q+1}\Big)\\
&&+\sum_{k=1}^{p}(-1)^{(k-1)q}\sum_{\sigma\in \mathbb S_{(k-1,q)}}(-1)^{\sigma}x_{p+q+1}\wedge P\Big(\frkX_{\sigma(1)},\dotsc,\frkX_{\sigma(k-1)},x_{k+q}\wedge Q(\frkX_{\sigma(k)},\dotsc,\frkX_{\sigma(k+q-1)},y_{k+q}) ,\frkX_{k+q+1},\dotsc,\frkX_{p+q},y_{p+q+1}\Big)\\
&&+\sum_{\sigma\in \mathbb S_{(p,q)}}(-1)^{pq}(-1)^{\sigma}x_{p+q+1}\wedge P\Big(\frkX_{\sigma(1)},\dotsc,\frkX_{\sigma(p)},Q(\frkX_{\sigma(p+1)},\dotsc,\frkX_{\sigma(p+q-1)}, \frkX_{\sigma(p+q)},y_{p+q+1})\Big)\\
&&-(-1)^{pq}\sum_{k=1}^{p}(-1)^{(k-1)p}\sum_{\sigma\in \mathbb S_{(k-1,p)}}(-1)^{\sigma}x_{p+q+1}\wedge Q\Big(\frkX_{\sigma(1)},\dotsc,\frkX_{\sigma(k-1)},P(\frkX_{\sigma(k)},\dotsc,\frkX_{\sigma(k+p-1)},x_{k+p})\wedge y_{k+p},\frkX_{k+p+1},\dotsc,\frkX_{p+q},y_{p+q+1}\Big)\\
&&-(-1)^{pq}\sum_{k=1}^{p}(-1)^{(k-1)p}\sum_{\sigma\in \mathbb S_{(k-1,p)}}(-1)^{\sigma}x_{p+q+1}\wedge Q\Big(\frkX_{\sigma(1)},\dotsc,\frkX_{\sigma(k-1)},x_{k+p}\wedge P(\frkX_{\sigma(k)},\dotsc,\frkX_{\sigma(k+p-1)},y_{k+p}) ,\frkX_{k+p+1},\dotsc,\frkX_{p+q},y_{p+q+1}\Big)\\
&&-(-1)^{pq}\sum_{\sigma\in \mathbb S_{(q,p)}}(-1)^{pq}(-1)^{\sigma}x_{p+q+1}\wedge Q\Big(\frkX_{\sigma(1)},\dotsc,\frkX_{\sigma(q)},P(\frkX_{\sigma(q+1)},\dotsc,\frkX_{\sigma(p+q-1)}, \frkX_{\sigma(p+q)},y_{p+q+1})\Big)\\
&=&\sum_{k=1}^{p}(-1)^{(k-1)q}\sum_{\sigma\in \mathbb S_{(k-1,q)}}(-1)^{\sigma}P\Big(\frkX_{\sigma(1)},\dotsc,\frkX_{\sigma(k-1)},\Psi(Q)(\frkX_{\sigma(k)},\dotsc,\frkX_{\sigma(k+q-1)})\frkX_{k+q},\frkX_{k+q+1},\dotsc,\frkX_{p+q},x_{p+q+1}\Big)\wedge y_{p+q+1}\\
&&+\sum_{\sigma\in \mathbb S_{(p,q)}}(-1)^{pq}(-1)^{\sigma}P\Big(\frkX_{\sigma(1)},\dotsc,\frkX_{\sigma(p)},Q(\frkX_{\sigma(p+1)},\dotsc,\frkX_{\sigma(p+q-1)}, \frkX_{\sigma(p+q)},x_{p+q+1})\Big)\wedge y_{p+q+1}\\
&&-(-1)^{pq}\sum_{k=1}^{p}(-1)^{(k-1)p}\sum_{\sigma\in \mathbb S_{(k-1,p)}}(-1)^{\sigma}Q\Big(\frkX_{\sigma(1)},\dotsc,\frkX_{\sigma(k-1)},\Psi(P)(\frkX_{\sigma(k)},\dotsc,\frkX_{\sigma(k+p-1)})\frkX_{k+p},\frkX_{k+p+1},\dotsc,\frkX_{p+q},x_{p+q+1}\Big)\wedge y_{p+q+1}\\
&&-(-1)^{pq}\sum_{\sigma\in \mathbb S_{(q,p)}}(-1)^{pq}(-1)^{\sigma}Q\Big(\frkX_{\sigma(1)},\dotsc,\frkX_{\sigma(q)},P(\frkX_{\sigma(q+1)},\dotsc,\frkX_{\sigma(p+q-1)}, \frkX_{\sigma(p+q)},x_{p+q+1})\Big)\wedge y_{p+q+1}\\
&&+\sum_{k=1}^{p}(-1)^{(k-1)q}\sum_{\sigma\in \mathbb S_{(k-1,q)}}(-1)^{\sigma}x_{p+q+1}\wedge P\Big(\frkX_{\sigma(1)},\dotsc,\frkX_{\sigma(k-1)},\Psi(Q)(\frkX_{\sigma(k)},\dotsc,\frkX_{\sigma(k+q-1)})\frkX_{k+q},\frkX_{k+q+1},\dotsc,\frkX_{p+q},y_{p+q+1}\Big)\\
&&+\sum_{\sigma\in \mathbb S_{(p,q)}}(-1)^{pq}(-1)^{\sigma}x_{p+q+1}\wedge P\Big(\frkX_{\sigma(1)},\dotsc,\frkX_{\sigma(p)},Q(\frkX_{\sigma(p+1)},\dotsc,\frkX_{\sigma(p+q-1)}, \frkX_{\sigma(p+q)},y_{p+q+1})\Big)\\
&&-(-1)^{pq}\sum_{k=1}^{p}(-1)^{(k-1)p}\sum_{\sigma\in \mathbb S_{(k-1,p)}}(-1)^{\sigma}x_{p+q+1}\wedge Q\Big(\frkX_{\sigma(1)},\dotsc,\frkX_{\sigma(k-1)},\Psi(P)(\frkX_{\sigma(k)},\dotsc,\frkX_{\sigma(k+p-1)})\frkX_{k+p},\frkX_{k+p+1},\dotsc,\frkX_{p+q},y_{p+q+1}\Big)\\
&&-(-1)^{pq}\sum_{\sigma\in \mathbb S_{(q,p)}}(-1)^{pq}(-1)^{\sigma}x_{p+q+1}\wedge Q\Big(\frkX_{\sigma(1)},\dotsc,\frkX_{\sigma(q)},P(\frkX_{\sigma(q+1)},\dotsc,\frkX_{\sigma(p+q-1)}, \frkX_{\sigma(p+q)},y_{p+q+1})\Big)\\
&=&\sum_{k=1}^{p}(-1)^{(k-1)q}\sum_{\sigma\in \mathbb S_{(k-1,q)}}(-1)^{\sigma}\Psi(P)\Big(\frkX_{\sigma(1)},\dotsc,\frkX_{\sigma(k-1)},\Psi(Q)(\frkX_{\sigma(k)},\dotsc,\frkX_{\sigma(k+q-1)})\frkX_{k+q},\frkX_{k+q+1},\dotsc,\frkX_{p+q}\Big)\frkX_{p+q+1}\\
&&-(-1)^{pq}\sum_{k=1}^{p}(-1)^{(k-1)p}\sum_{\sigma\in \mathbb S_{(k-1,p)}}(-1)^{\sigma}\Psi(Q)\Big(\frkX_{\sigma(1)},\dotsc,\frkX_{\sigma(k-1)},\Psi(P)(\frkX_{\sigma(k)},\dotsc,\frkX_{\sigma(k+p-1)})\frkX_{k+p},\frkX_{k+p+1},\dotsc,\frkX_{p+q}\Big)\frkX_{p+q+1}\\
&&+(-1)^{pq}\sum_{\sigma\in\mathbb S_{(p,q)}}(-1)^{\sigma}[\Psi (P)(\frkX_{\sigma(1)},\dotsc,\frkX_{\sigma(p)}), \Psi (Q)(\frkX_{\sigma(p+1)},\dotsc,\frkX_{\sigma(p+q)})]\frkX_{p+q+1}\\
&=&([\Psi (P),\Psi (Q)]_\B)(\frkX_1,\dotsc,\frkX_{p+q})\frkX_{p+q+1}.
\end{eqnarray*}
}
Thus, we deduce that $\Psi \big([P,Q]_{3\mbox{-}\Lie}\big)=[\Psi (P),\Psi (Q)]_\B$. The proof is finished.
\end{proof}
}

\begin{cor}\label{rmk:O-pre}
Let $(\g,[\cdot,\cdot,\cdot]_\g)$ is a $3$-Lie algebra and set $\pi(x,y,z)=[x,y,z]_\g$. Then $\Psi(\pi)$ is a Maurer-Cartan element of the graded Lie algebra $\big(\frkC^*_\Leib(\wedge^2\g,\wedge^2\g),[\cdot,\cdot]_{\B}\big)$, which is exactly the Leibniz algebra structure on the fundamental objects $\wedge^2\g$ given by \eqref{eq:bracketfunda}.
\end{cor}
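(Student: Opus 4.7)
The plan is to derive the corollary as a direct consequence of the preceding theorem stating that $\Psi$ is a homomorphism of graded Lie algebras, together with the characterization of $3$-Lie brackets as Maurer-Cartan elements from Theorem~\ref{thm:gradelie}.

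First I would handle the Maurer-Cartan property. Since $\pi(x,y,z)=[x,y,z]_\g$ is a $3$-Lie bracket, Theorem~\ref{thm:gradelie} gives $[\pi,\pi]_{3\mbox{-}\Lie}=0$. Because $\Psi$ is a homomorphism of graded Lie algebras from $(\frkC^*_{3\mbox{-}\Lie}(\g,\g),[\cdot,\cdot]_{3\mbox{-}\Lie})$ to $(\frkC^*_\Leib(\wedge^2\g,\wedge^2\g),[\cdot,\cdot]_\B)$ and $\Psi$ is of degree $0$, we obtain
\begin{equation*}
[\Psi(\pi),\Psi(\pi)]_\B \;=\; \Psi\bigl([\pi,\pi]_{3\mbox{-}\Lie}\bigr)\;=\;0,
\end{equation*}
so $\Psi(\pi)\in\frkC^1_\Leib(\wedge^2\g,\wedge^2\g)=\Hom(\wedge^2\g\otimes\wedge^2\g,\wedge^2\g)$ is a Maurer-Cartan element.

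Next I would identify $\Psi(\pi)$ with the fundamental bracket $[\cdot,\cdot]_\sfF$ by a direct unpacking of definitions. Note that $\pi\in\frkC^1_{3\mbox{-}\Lie}(\g,\g)=\Hom(\wedge^2\g\wedge\g,\g)$, so specializing the definition of $\Psi$ with $p=1$ and evaluating on $\frkX=x_1\wedge y_1$ and $\frkY=x_2\wedge y_2$ gives
\begin{equation*}
\Psi(\pi)(\frkX,\frkY)\;=\;\pi(\frkX,x_2)\wedge y_2 + x_2\wedge \pi(\frkX,y_2)\;=\;[x_1,y_1,x_2]_\g\wedge y_2 + x_2\wedge[x_1,y_1,y_2]_\g,
\end{equation*}
which is precisely the right-hand side of \eqref{eq:bracketfunda}. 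Consequently $\Psi(\pi)=[\cdot,\cdot]_\sfF$, and the fact that this is a Leibniz bracket is equivalent to the Maurer-Cartan condition $[\Psi(\pi),\Psi(\pi)]_\B=0$ established above.

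There is no serious obstacle: the hard work is already packaged in the preceding theorem that $\Psi$ intertwines $[\cdot,\cdot]_{3\mbox{-}\Lie}$ with $[\cdot,\cdot]_\B$. The only thing to be careful about is keeping straight the grading convention (so that a $3$-Lie bracket, viewed as an element of $\frkC^1_{3\mbox{-}\Lie}(\g,\g)$, is indeed sent by $\Psi$ to a degree-$1$ element of $\frkC^*_\Leib(\wedge^2\g,\wedge^2\g)$) and matching the variables in \eqref{eq:bracketfunda} with the arguments in the definition of $\Psi$.
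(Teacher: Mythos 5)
Your proposal is correct and follows exactly the route the paper intends: the corollary is stated as an immediate consequence of the preceding theorem that $\Psi$ is a degree-zero homomorphism of graded Lie algebras, combined with Theorem~\ref{thm:gradelie}, and your explicit unpacking of $\Psi(\pi)$ against \eqref{eq:bracketfunda} (with the variable relabeling handled correctly) supplies the identification with $[\cdot,\cdot]_{\sfF}$ that the paper leaves implicit.
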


Now we construct the dgLa that controls deformations of $3$-Lie algebras.
\begin{thm}
Let $(\g,\pi)$ be a $3$-Lie algebra. Then $(\frkC^*_{3\mbox{-}\Lie}(\g,\g),[\cdot,\cdot]_{3\mbox{-}\Lie},d_\pi)$ is a dgLa, where $d_\pi$ is defined by
\begin{equation}
d_\pi:=[\pi,\cdot]_{3\mbox{-}\Lie}.
\end{equation}
Moreover, $\pi+\pi'$ is a pre-Lie algebra structure on $\g$, where $\pi'\in \Hom(\wedge^3\g,\g)$, if and only if $\pi'$ is a Maurer-Cartan element of the dgLa $(\frkC^*_{3\mbox{-}\Lie}(\g,\g),[\cdot,\cdot]_{3\mbox{-}\Lie},d_\pi)$.
\end{thm}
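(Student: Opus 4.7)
The statement is essentially a direct application of the general Maurer--Cartan machinery (Theorem~\ref{pp:mce}) to the graded Lie algebra $(\frkC^*_{3\mbox{-}\Lie}(\g,\g),[\cdot,\cdot]_{3\mbox{-}\Lie})$ constructed in Theorem~\ref{thm:gradelie}. The plan is to separate the statement into its two assertions and reduce each to a known result: (i) $(\frkC^*_{3\mbox{-}\Lie}(\g,\g),[\cdot,\cdot]_{3\mbox{-}\Lie},d_\pi)$ is a dgLa, and (ii) the perturbation $\pi+\pi'$ is again a $3$-Lie algebra structure if and only if $\pi'$ satisfies the Maurer--Cartan equation in this dgLa.

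For (i), the key input is that by Theorem~\ref{thm:gradelie} the bracket $\pi\in\frkC^1_{3\mbox{-}\Lie}(\g,\g)$ is a Maurer--Cartan element of the gLa $(\frkC^*_{3\mbox{-}\Lie}(\g,\g),[\cdot,\cdot]_{3\mbox{-}\Lie})$, i.e.~$[\pi,\pi]_{3\mbox{-}\Lie}=0$. Then Theorem~\ref{pp:mce} applies verbatim: the operator $d_\pi:=[\pi,\cdot]_{3\mbox{-}\Lie}$ is a degree~$1$ derivation of the graded Lie bracket (by the graded Jacobi identity) and squares to zero since $d_\pi\circ d_\pi=\tfrac{1}{2}[[\pi,\pi]_{3\mbox{-}\Lie},\cdot]_{3\mbox{-}\Lie}=0$. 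This gives the differential graded Lie algebra structure.

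For (ii), I would just expand $[\pi+\pi',\pi+\pi']_{3\mbox{-}\Lie}$ using bilinearity and graded skew-symmetry of $[\cdot,\cdot]_{3\mbox{-}\Lie}$:
\begin{equation*}
[\pi+\pi',\pi+\pi']_{3\mbox{-}\Lie}=[\pi,\pi]_{3\mbox{-}\Lie}+2[\pi,\pi']_{3\mbox{-}\Lie}+[\pi',\pi']_{3\mbox{-}\Lie}.
\end{equation*}
Since $[\pi,\pi]_{3\mbox{-}\Lie}=0$, and since $\pi+\pi'$ is a $3$-Lie bracket if and only if $[\pi+\pi',\pi+\pi']_{3\mbox{-}\Lie}=0$ by Theorem~\ref{thm:gradelie}, this is equivalent to
\begin{equation*}
d_\pi\pi'+\tfrac{1}{2}[\pi',\pi']_{3\mbox{-}\Lie}=0,
\end{equation*}
which is precisely the Maurer--Cartan equation for $\pi'$ in the dgLa of part~(i). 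This is the form of the argument already used in Theorem~\ref{thm:deformation} and its analogues in the preceding subsections, so it is entirely parallel.

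There is no real obstacle: the nontrivial content is already packaged into Theorem~\ref{thm:gradelie} (the construction of the graded Lie bracket $[\cdot,\cdot]_{3\mbox{-}\Lie}$ and the identification of $3$-Lie algebra structures with its Maurer--Cartan elements) and into Theorem~\ref{pp:mce} (the twisting procedure). The only minor point to remark upon is that the statement's phrase ``$\pi+\pi'$ is a pre-Lie algebra structure'' is a typo for ``$3$-Lie algebra structure'', which of course is what the computation above yields.
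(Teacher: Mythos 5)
Your proposal is correct and is exactly the argument the paper intends: the theorem is stated without proof as an immediate consequence of Theorem~\ref{pp:mce} applied to the graded Lie algebra of Theorem~\ref{thm:gradelie}, which is precisely your reduction, and your expansion of $[\pi+\pi',\pi+\pi']_{3\mbox{-}\Lie}$ mirrors the pattern of Theorem~\ref{thm:deformation} and its analogues. You are also right that ``pre-Lie algebra structure'' in the statement is a typo for ``$3$-Lie algebra structure''.
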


Representations of $3$-Lie algebras can be characterized by a dgLa. Let $(\g,\pi)$ be a $3$-Lie algebra, $\rho$ a linear map from $\wedge^2\g$ to $\gl(V)$. Define $\bar{\pi},\bar{\rho}\in \Hom(\wedge^3(\g\oplus V),\g\oplus V)$ by
$$
\bar{\pi}(x+u,y+v,z+w)=\pi(x,y,z),~\bar{\rho}(x+u,y+v,z+w)=\rho(x,y)w+\rho(y,z)u+\rho(z,x)v,
$$
for all $x,y,z\in\g, ~u,v,w\in V$. Then we have
\begin{prop}
With the above notations, $(V;\rho)$ is a representation of $(\g,\pi)$ if and only if $\bar{\rho}$ is a Maurer-Cartan element of the dgLa $\big(\frkC^*_{3\mbox{-}\Lie}(\g\oplus V,\g\oplus V),[\cdot,\cdot]_{3\mbox{-}\Lie},d_{\bar{\pi}}=[\bar{\pi},\cdot]_{3\mbox{-}\Lie}\big)$.
\end{prop}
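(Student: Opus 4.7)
The plan is to mimic the strategy already used for the analogous propositions in the associative, Lie, pre-Lie, and Leibniz cases. The key observation is that $(V;\rho)$ is a representation of the $3$-Lie algebra $(\g,\pi)$ if and only if the bracket
\[
[x_1+u_1,x_2+u_2,x_3+u_3]_{\rho}:=\pi(x_1,x_2,x_3)+\rho(x_1,x_2)u_3+\rho(x_2,x_3)u_1+\rho(x_3,x_1)u_2
\]
defines a $3$-Lie algebra structure on the direct sum $\g\oplus V$ (the semidirect product). This is the standard semidirect product construction for $3$-Lie algebras; I would first verify it by unpacking the Fundamental Identity for $[\cdot,\cdot,\cdot]_\rho$ and checking that the three mixed components (linear in one element of $V$) reproduce exactly the two axioms defining a representation from Definition~\ref{defi:usualrep}, while the purely $\g$-valued component is precisely the Fundamental Identity for $(\g,\pi)$.

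Next I would note that by definition $[x_1+u_1,x_2+u_2,x_3+u_3]_\rho=(\bar\pi+\bar\rho)(x_1+u_1,x_2+u_2,x_3+u_3)$, so the semidirect product $3$-Lie bracket is exactly $\bar\pi+\bar\rho$. By Theorem~\ref{thm:gradelie} applied to $\g\oplus V$, this is a $3$-Lie algebra structure on $\g\oplus V$ if and only if
\[
[\bar\pi+\bar\rho,\bar\pi+\bar\rho]_{3\mbox{-}\Lie}=0.
\]
Since $\bar\pi$ is itself a $3$-Lie algebra structure on $\g\oplus V$ (it extends $\pi$ by zero on the $V$-component), another application of Theorem~\ref{thm:gradelie} gives $[\bar\pi,\bar\pi]_{3\mbox{-}\Lie}=0$.

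Expanding the bracket by the graded symmetry and bilinearity of $[\cdot,\cdot]_{3\mbox{-}\Lie}$ then yields
\[
[\bar\pi+\bar\rho,\bar\pi+\bar\rho]_{3\mbox{-}\Lie}=[\bar\pi,\bar\pi]_{3\mbox{-}\Lie}+2[\bar\pi,\bar\rho]_{3\mbox{-}\Lie}+[\bar\rho,\bar\rho]_{3\mbox{-}\Lie}=2\,d_{\bar\pi}\bar\rho+[\bar\rho,\bar\rho]_{3\mbox{-}\Lie},
\]
so the condition is equivalent to the Maurer-Cartan equation $d_{\bar\pi}\bar\rho+\tfrac{1}{2}[\bar\rho,\bar\rho]_{3\mbox{-}\Lie}=0$. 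This chain of equivalences proves the proposition.

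The only nontrivial step is the first one: the equivalence between the representation axioms and the semidirect-product Fundamental Identity for $\bar\pi+\bar\rho$. This is the calculation that the author suppressed (it appears in the commented-out Lemma~\ref{lem:semidirectp}-style statement in the \texttt{emptycomment} block), and where I would spend the most care. The bulk of the verification amounts to expanding $\sfF_{x_1+u_1,\ldots,x_5+u_5}$ for the bracket $[\cdot,\cdot,\cdot]_\rho$, collecting the components that are linear in a single $u_i$, and reading off the two identities of Definition~\ref{defi:usualrep}; the components of higher $V$-degree vanish automatically because $\bar\rho$ is zero on three $V$-arguments. After that, the passage to the Maurer-Cartan formulation is purely formal and entirely parallel to the previous subsections.
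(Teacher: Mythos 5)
Your proposal is correct and follows essentially the same route the paper takes: it is the semidirect-product argument the authors spell out for the associative analogue (representation $\Leftrightarrow$ $\bar\pi+\bar\rho$ is an algebra structure on $\g\oplus V$ $\Leftrightarrow$ $[\bar\pi+\bar\rho,\bar\pi+\bar\rho]_{3\mbox{-}\Lie}=0$, then expand using $[\bar\pi,\bar\pi]_{3\mbox{-}\Lie}=0$), transported verbatim to the $3$-Lie setting. You also correctly locate the one genuinely computational step (the equivalence of the representation axioms with the Fundamental Identity for the semidirect product), which the paper likewise suppresses.
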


At the end of this subsection, we give the relationship between the coboundary operator of a $3$-Lie algebra and the differential $d_\pi$ defined by the Maurer-Cartan element. Assume that $(\g,[\cdot,\cdot,\cdot]_\g)$ is a $3$-Lie algebra  and set $\pi(x,y,z)=[x,y,z]_\g$. Consider the operator
$\dM:\frkC^{n-1}_{3\mbox{-}\Lie}(\g;\g)\longrightarrow \frkC^{n}_{3\mbox{-}\Lie}(\g;\g)$ associated to the adjoint representation, then we have
\begin{prop}
For all $f\in \frkC^{n-1}_{3\mbox{-}\Lie}(\g;\g)$, we have $$\dM f=(-1)^{n-1}d_\pi f=(-1)^{n-1}[\pi,f]_{3\mbox{-}\Lie},~\forall n=1,2,\cdots.$$
\end{prop}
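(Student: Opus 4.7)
The plan is to expand $[\pi,f]_{3\mbox{-}\Lie} = \pi\circ f - (-1)^{n-1} f\circ \pi$ using the formula in Theorem~\ref{thm:gradelie} with $(p,q) = (1, n-1)$ for $\pi\circ f$ and $(p,q) = (n-1,1)$ for $f\circ \pi$, then multiply by $(-1)^{n-1}$ and match the result termwise against the four summands of $\dM f$ in~\eqref{eq:drho}. Since $(-1)^{n-1}\cdot(-1)^{n-1} = 1$, the identity I want is
\[
\dM f \;=\; (-1)^{n-1}(\pi\circ f) \;-\; (f\circ \pi).
\]

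For $\pi\circ f$, since $p = 1$ the two $k$-sums in the definition of $P\circ Q$ collapse to $k = 1$ with the trivial shuffle set $\mathbb S_{(0,n-1)}$, producing
\[
\pi\bigl(f(\frkX_1,\dotsc,\frkX_{n-1},x_n)\wedge y_n,\,z\bigr) \;+\; \pi\bigl(x_n\wedge f(\frkX_1,\dotsc,\frkX_{n-1},y_n),\,z\bigr).
\]
Since $\pi(a\wedge b,c) = [a,b,c]_\g$ is fully antisymmetric, cycling the $f$-value into the third slot rewrites these as $\rho(y_n,z)f(\dotsc,x_n) + \rho(z,x_n)f(\dotsc,y_n)$, matching the tail term $(-1)^{n+1}(\rho(y_n,z)f(\dotsc,x_n) + \rho(z,x_n)f(\dotsc,y_n))$ of $\dM f$ after the prefactor $(-1)^{n-1} = (-1)^{n+1}$. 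The third sum of $\pi\circ f$ is indexed by $(1,n-1)$-shuffles $\sigma_j$ with $\sigma_j(1) = j$ and sign $(-1)^{j-1}$; it yields $\sum_{j=1}^n (-1)^{n+j}\rho(\frkX_j) f(\frkX_1,\dotsc,\hat{\frkX}_j,\dotsc,\frkX_n,z)$, which after the prefactor $(-1)^{n-1}$ becomes $\sum_{j=1}^n (-1)^{j+1}\rho(\frkX_j) f(\dotsc)$, matching the third summand of $\dM f$.

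For $f\circ \pi$ with $(p,q) = (n-1,1)$, the shuffles in $\mathbb S_{(k-1,1)}$ are parametrized by $\sigma(k) = j$ for $1\leq j \leq k$ with sign $(-1)^{k-j}$. Summing the two $k$-indexed pieces and invoking the identity (read off from~\eqref{eq:bracketfunda})
\[
\pi(\frkX_j,x_{k+1})\wedge y_{k+1} + x_{k+1}\wedge\pi(\frkX_j,y_{k+1}) = [\frkX_j,\frkX_{k+1}]_\sfF,
\]
then re-indexing $k \mapsto k-1$, gives $\sum_{1\leq j<k\leq n}(-1)^{j+1} f(\frkX_1,\dotsc,\hat{\frkX}_j,\dotsc,\frkX_{k-1},[\frkX_j,\frkX_k]_\sfF,\frkX_{k+1},\dotsc,\frkX_n,z)$. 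The third sum, indexed by $\mathbb S_{(n-1,1)}$ (shuffles $\sigma_j$ with $\sigma_j(n) = j$ and sign $(-1)^{n-j}$), produces $\sum_{j=1}^n (-1)^{j+1} f(\dotsc,\hat{\frkX}_j,\dotsc,[\frkX_j,z]_\g)$. Subtracting $(f\circ \pi)$ flips both signs to $(-1)^j$, exactly matching the first two summands of $\dM f$.

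The main obstacle is careful sign bookkeeping: the global $(-1)^{pq}$ in the graded commutator, the shuffle signs $(-1)^\sigma$, the internal $(-1)^{(k-1)q}$ factors, and the outer $(-1)^{n-1}$ must all combine correctly through identifications such as $(-1)^{2n-j-1} = (-1)^{j+1}$. The other nontrivial ingredient is the cyclicity argument that reconciles the first two terms of $\pi\circ f$ with the $\rho(y_n,z),\rho(z,x_n)$ tail of $\dM f$, which rests on the full antisymmetry of $[\cdot,\cdot,\cdot]_\g$. Since both sides of the claimed identity are $\K$-linear in $f$ and $(\frkX_1,\dotsc,\frkX_n,z)$ is arbitrary, the termwise match completes the proof.
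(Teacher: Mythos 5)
Your proposal is correct: the paper states this proposition without proof, and your direct expansion of $(-1)^{n-1}[\pi,f]_{3\mbox{-}\Lie}=(-1)^{n-1}(\pi\circ f)-(f\circ\pi)$ via the circle-product formula of Theorem \ref{thm:gradelie}, matched termwise against the four summands of \eqref{eq:drho}, is exactly the computation the paper leaves implicit (it mirrors the short verification given for the associative analogue). All the shuffle signs $(-1)^{j-1}$, $(-1)^{k-j}$, $(-1)^{n-j}$, the factors $(-1)^{(k-1)q}$ and $(-1)^{pq}$, and the cyclic-antisymmetry step identifying $\pi(f(\dotsc,x_n)\wedge y_n,z)$ with $\rho(y_n,z)f(\dotsc,x_n)$ check out.
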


\section{Deformations of infinity-structures}
Most naturally occurring algebraic structures usually admit ``strong homotopy versions''; they are obtained by allowing their defining identities  to hold up to ``higher homotopies''. A systematic approach to strong homotopy algebras relies on the theory of \emph{operads}, cf. \cite{LV}, which is outside the scope of this paper.  Instead we will restrict ourselves with considering two representative examples: $A_\infty$ and $L_\infty$-algebras (also known as strong homotopy associative and strong homotopy Lie algebras) and their morphisms.

$A_\infty$ and $L_\infty$-algebras are traditionally defined as graded vector spaces $\g$ supplied with collections of multilinear maps $\g^{\otimes n}\to \g, n=1,2,\ldots$ satisfying various constraints, cf. \cite{fp02,Keller, ls93}. We choose a different, albeit equivalent, approach, which posits that an $A_\infty$ or $L_\infty$-structure is determined by a Maurer-Cartan element in a suitable graded Lie algebra. This has the advantage of being concise, conceptual, and avoids many cumbersome sign issues. This definition is adopted in e.g. \cite{HL} where it is also explained how it is equivalent to the one given by multilinear maps.

We need a few preliminaries regarding completed tensor and symmetric algebras. Let $\g$ be a graded vector space and consider the tensor algebra $T\g^*:=\bigoplus_{n=0}^\infty (\g^*)^{\otimes n}$ on its linear dual $\g^*$. If $\h\subset \g$ is a \emph{finite-dimensional} subspace of $\g$, then restriction determines an algebra map
$$T\g^*\to (T\h)^*\cong \prod_{n=0}^\infty (\h^*)^{\otimes n}.$$
Then set $\hat{T}\h^*:=\varprojlim \prod_{n=0}^\infty (\h^*)^{\otimes n}$ where the inverse limit is taken over all finite-dimensional subspaces $\h$ of $\g$. Note that if $\g$ is itself finite-dimensional, then $\hat{T}\g^*:=\prod_{n=0}^\infty (\g^*)^{\otimes n}$. We will call $\hat{T}\g^*$ the \emph{completed tensor algebra} on $\g^*$.

It is clear that $T\g^*\cong \prod_{n=0}^\infty (\g^*)^{\hat{\otimes} n}$ where $(\g^*)^{\hat{\otimes} n}$ is a certain completion of $(\g^*)^{{\otimes} n}$; in fact $(\g^*)^{\hat{\otimes} n}\cong (\g^{\otimes n})^*$. The symmetric group $\mathbb S_n$ acts on $(\g^*)^{{\hat{\otimes}} n}$ by permuting the tensor factors and we define the \emph{completed symmetric algebra} on $\g^*$ as $\hat{S}\g^*:=\prod_{n=0}^\infty (\g^*)^{\hat{\otimes n} }_{\mathbb S_n}$.

Note that the vector space $\g^*$ is an inverse limit of its finite-dimensional quotients (since $\g$ is a union of its finite-dimensional subspaces); it is, thus, a \emph{topological} vector space (with the inverse limit topology). Similarly, $\hat{T}\g^*$ and $\hat{S}\g^*$ are \emph{topological} algebras (as inverse limits of discrete algebras). When considering maps involving $\g^*$, $\hat{T}\g^*$ and $\hat{S}\g^*$ or derivations of $T\hat{\g}^*$ and $\hat{S}\g^*$, we will always assume them to be continuous without mentioning that explicitly.

The completed symmetric and tensor algebras possess certain universal properties, similar to the uncompleted versions. Namely, for another vector space $\h$ it holds that \begin{itemize}
	\item The set of algebra maps $\hat{T}\g^*\to \hat{T}\h^*$ is in 1-1 correspondence with linear maps
	$$\g^*\to \hat{T}_+\h^*:=\prod_{n=1}^\infty (\h^*)^{\hat{\otimes n}}\subset\hat{T}\h^*$$ (so that any such map is determined by its restriction to $\g^*$ and the latter lands in the maximal ideal $\hat{T}_+\h^*$ of $\hat{T}\h^*$) and the set $\Der (\hat{T}\g^*)$ is in 1-1 correspondence with maps $\g^*\to \hat{T}\g^*$ (so that any derivation is determined by its restriction onto $\g^*$).
	\item The set of algebra maps $\hat{S}\g^*\to \hat{S}\h^*$ is in 1-1 correspondence with linear maps $$\g^*\to \hat{S}_+\h^*:=\prod_{n=1}^\infty (\h^*)^{\hat{\otimes n}}_{\mathbb S_n}$$ (so that any such map is determined by its restriction to $\g^*$ and the latter lands in the maximal ideal $\hat{S}_+\h^*$ of $\hat{S}\g^*$) and the set $\Der (\hat{S}\g^*)$ is in 1-1 correspondence with maps $\g^*\to \hat{S}\g^*$ (so that any derivation is determined by its restriction onto $\g^*$).	
\end{itemize}
If a map $\g^*\to \hat{T}\g^*$ has its image in the maximal ideal $\hat{T}_+\g^*$ of $\hat{T}\g^*$, we say that the corresponding derivation of $\Der \hat{T}\g^*$ \emph{vanishes at zero}. Similarly if a map $\g^*\to \hat{S}\g^*$ has its image in the maximal ideal $\hat{S}_+\g^*$ of $\hat{S}\g^*$, we say that the corresponding derivation of $\Der \hat{T}\g^*$ vanishes at zero. Derivations vanishing at zero form Lie subalgebras in $\Der \hat{T}\g^*$ and $\Der \hat{S}\g^*$ that will be denoted by $\Der_0 \hat{T}\g^*$ and $\Der_0 \hat{S}\g^*$ respectively.
\begin{rem}
Let $\g$ be a non-unital differential graded associative algebra. Then the differential $\g\to\g$ gives rise to a degree $1$ map $m_1: \Sigma^{-1}\g^*\to \Sigma^{-1}\g^*$ and the multiplication map $\g\otimes\g\to \g$ similarly gives a degree $1$ map $m_2:\Sigma^{-1}\g^*\to \Sigma^{-1}\g^*\hat{\otimes}\Sigma^{-1}\g^*$. The resulting map $m_1+m_2:\Sigma^{-1}\g^*\to \Sigma^{-1}\g^*\oplus \Sigma^{-1}\g^*\hat{\otimes}\Sigma^{-1}\g^*$ corresponds to a derivation $m$ of $\hat{T}V^*$ and one can show that $m^2=0$. Thus, $A_\infty$-algebras generalize (non-unital) associative algebras. Arguing similarly, we conclude that $L_\infty$-algebras are a generalization of differential graded Lie algebras.
\end{rem}
 We can now give compact definitions of $A_\infty$ and $L_\infty$-algebras.
\begin{defi} \label{def_infstr}
	Let $\g$ be a  graded vector space:
	\begin{enumerate}
		\item
		An $A_\infty$-structure on $\g$ is a derivation $ m\in\Der_0(\hat{T}\Sigma^{-1}\g^*) $
		of degree $1$ such that $m^2=0$.
		\item
		An $L_\infty$-structure on $\g$ is a derivation
		$ m\in\Der_0(\hat{S}\Sigma^{-1}\g^*)$
		of degree $1$ such that $m^2=0$.
		\end{enumerate}
\end{defi}
In other words, an $A_\infty$-structure on $\g$ is a Maurer-Cartan element in $\Der_0(\hat{T}\g^*)$ and an $L_\infty$-structure
on $\g$ is a Maurer-Cartan element in $\Der_0(\hat{S}\g^*)$.

\emptycomment{\begin{rmk}
In fact, we can deduced the concrete definitions of $A_\infty$ and $L_\infty$-algebras from the above definition:
\begin{enumerate}
\item
An $A_{\infty}$-algebra is a $\mathbb Z$-graded vector space $ \g=\oplus_{i\in \mathbb Z} \g^i$ equipped with a collection $(n\ge 1)$ of linear maps $m_n:\otimes^n\g\lon\g$ of degree $n-2$ with the property that, for any homogeneous elements $x_1,\cdots,x_n\in \g$, we have the {\bf Stasheff identities} for $n\ge 1$
\begin{eqnarray*}
\sum_{i=1}^{n}\sum_{r=0}^{n-i}(-1)^{r+i(n-r-i)+i(x_1+\cdots+x_r)}m_{n+1-i}(x_1,\cdots,x_r,m_{i}(x_{i+1},\cdots,x_{i+r}),x_{i+r+1},\cdots,x_n)=0.
\end{eqnarray*}

\item
An $L_{\infty}$-algebra is a $\mathbb Z$-graded vector space $ \g=\oplus_{i\in \mathbb Z} \g^i$ equipped with a collection $(n\ge 1)$ of linear maps $l_n:\otimes^n\g\lon\g$ of degree $n-2$ with the property that, for any homogeneous elements $x_1,\cdots,x_n\in \g$, we have
\begin{itemize}\item[\rm(i)]
{\bf (graded antisymmetry)} for every $\sigma\in\mathbb S_{n}$,
\begin{eqnarray*}
l_n(x_{\sigma(1)},\cdots,x_{\sigma(n)})=(-1)^{\sigma}\varepsilon(\sigma)l_n(x_1,\cdots,x_n),
\end{eqnarray*}
\item[\rm(ii)] {\bf (generalized Jacobi identities)} for all $n\ge 1$,
\begin{eqnarray*}\label{sh-Lie}
\sum_{i=1}^{n}\sum_{\sigma\in \mathbb S_{(i,n-i)} }(-1)^{i(n-i)}(-1)^{\sigma}\varepsilon(\sigma)l_{n-i+1}(l_i(x_{\sigma(1)},\cdots,x_{\sigma(i)}),x_{\sigma(i+1)},\cdots,x_{\sigma(n)})=0.
\end{eqnarray*}
\end{itemize}
\end{enumerate}
\end{rmk}
}

The notions of $A_\infty$ and $L_\infty$-morphisms are defined similarly.
\begin{defi} \label{def_infmor}
	Let $\g$ and $\h$ be graded vector spaces:
	\begin{enumerate}
		\item
		Let $m$ and $m'$ be $A_\infty$-structures on $\g$ and $\h$ respectively. An $A_\infty$-morphism from $\g$ to $\h$ is a an algebra homomorphism
		\[ \phi:\hat{T}\Sigma^{-1}\h^* \to \hat{T}\Sigma^{-1}\g^* \]
		of degree zero such that $\phi \circ m'=m \circ \phi$.
		\item
		Let $m$ and $m'$ be $L_\infty$-structures on $\g$ and $\h$ respectively. An $L_\infty$-morphism from $\g$ to $\h$ is an algebra homomorphism
		\[ \phi:\hat{S}\Sigma^{-1}\h^* \to \hat{S}\Sigma^{-1}\g^* \]
		of degree zero such that $\phi \circ m'=m \circ \phi$.
	\end{enumerate}
\end{defi}
We have the following infinity-analogue of Theorem \ref{thm:deformation} and Theorem \ref{thm:deform-Lie}.
\begin{thm}Let $\g$ be a graded vector space: \begin{enumerate}
		\item
	 Let $m\in \Der_0(\hat{T}\Sigma^{-1}\g^*)$ be an $A_\infty$-structure on $\g$. Then
for an element $m^\prime\in \Der_0(\hat{T}\Sigma^{-1}\g^*)$ of degree $1$, the element $m+m^\prime\in \Der_0(\hat{T}\Sigma^{-1}\g^*)$ is an $A_\infty$-structure on $\g$ if and only if $m^\prime$ is a Maurer-Cartan element in the
differential graded Lie algebra $(\Der_0(\hat{T}\Sigma^{-1}\g^*), d_m)$.	
\item Let $m\in \Der_0(\hat{S}\Sigma^{-1}\g^*)$ be an $L_\infty$-structure on  $\g$. Then
for an element $m^\prime\in \Der_0(\hat{S}\Sigma^{-1}\g^*)$ of degree $1$, the element $m+m^\prime\in \Der_0(\hat{S}\Sigma^{-1}\g^*)$ is an $L_\infty$-structure on $\g$ if and only if $m^\prime$ is a Maurer-Cartan element in the
differential graded Lie algebra $(\Der_0(\hat{S}\Sigma^{-1}\g^*), d_m)$.
\end{enumerate}
	\end{thm}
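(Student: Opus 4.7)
The plan is to reduce the statement to the general Maurer-Cartan deformation principle (Theorem \ref{pp:mce}) applied to the graded Lie algebras $\Der_0(\hat{T}\Sigma^{-1}\g^*)$ and $\Der_0(\hat{S}\Sigma^{-1}\g^*)$. First I would verify that these spaces of derivations vanishing at zero form graded Lie algebras under the graded commutator bracket
\[ [D_1, D_2] := D_1 \circ D_2 - (-1)^{|D_1|\,|D_2|} D_2 \circ D_1. \]
This is a routine check: the graded commutator of two graded derivations of $\hat{T}\Sigma^{-1}\g^*$ (respectively $\hat{S}\Sigma^{-1}\g^*$) is again a graded derivation, and the vanishing-at-zero condition is manifestly preserved since both summands restrict $\g^*$ into the relevant maximal ideal.

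The next step is to observe the elementary identity that for any degree $1$ element $D$ of a graded associative algebra one has $D \circ D = \tfrac{1}{2}[D,D]$ with respect to the graded commutator; indeed $[D,D] = D\circ D - (-1)^{1\cdot 1} D\circ D = 2\,D\circ D$. Applying this to a degree $1$ derivation $m$, the defining condition $m^2 = 0$ in Definition \ref{def_infstr} becomes exactly the Maurer-Cartan equation $\tfrac{1}{2}[m,m]=0$ in the graded Lie algebra of derivations. Thus $A_\infty$-structures (respectively $L_\infty$-structures) on $\g$ are precisely Maurer-Cartan elements of $(\Der_0(\hat{T}\Sigma^{-1}\g^*), [\cdot,\cdot])$ (respectively $(\Der_0(\hat{S}\Sigma^{-1}\g^*), [\cdot,\cdot])$).

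With these identifications in place, the theorem is a direct application of Theorem \ref{pp:mce}. Given an $A_\infty$-structure $m$, that theorem tells us that $d_m := [m,\cdot]$ is a differential on $\Der_0(\hat{T}\Sigma^{-1}\g^*)$, producing a dgLa, and moreover that for a degree $1$ element $m^\prime$, the sum $m+m^\prime$ is a Maurer-Cartan element of the underlying gLa if and only if $m^\prime$ satisfies the Maurer-Cartan equation
\[ d_m m^\prime + \tfrac{1}{2}[m^\prime, m^\prime] = 0 \]
in the dgLa $(\Der_0(\hat{T}\Sigma^{-1}\g^*), [\cdot,\cdot], d_m)$. Translating the statement that $m+m^\prime$ is a Maurer-Cartan element back through the identity $D\circ D = \tfrac{1}{2}[D,D]$ recovers $(m+m^\prime)^2 = 0$, which is the statement that $m+m^\prime$ is an $A_\infty$-structure on $\g$. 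The $L_\infty$ case proceeds identically with $\hat{S}\Sigma^{-1}\g^*$ in place of $\hat{T}\Sigma^{-1}\g^*$.

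The argument is almost entirely formal once the preliminary identifications are made, so the main obstacle — mild as it is — lies in the sign bookkeeping for the graded commutator of derivations and the verification that $[\cdot,\cdot]$ genuinely satisfies graded antisymmetry and the graded Jacobi identity on $\Der_0$; both of these are standard facts about derivations of graded associative algebras and present no substantive difficulty.
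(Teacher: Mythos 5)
Your proposal is correct and follows exactly the route the paper intends: the paper itself notes, immediately after Definition \ref{def_infstr}, that an $A_\infty$- (resp.\ $L_\infty$-) structure is precisely a Maurer-Cartan element in the graded Lie algebra $\Der_0(\hat{T}\Sigma^{-1}\g^*)$ (resp.\ $\Der_0(\hat{S}\Sigma^{-1}\g^*)$), and the theorem is then the direct specialization of Theorem \ref{pp:mce}, just as you argue. Your sign check $[D,D]=2\,D\circ D$ for degree $1$ derivations is the only computation needed, and it is right.
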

Let $(\g, m_\g)$ be an $A_\infty$-algebra and $\h$ be a (possibly non-unital) differential graded associative algebra \emph{viewed as an
$A_\infty$-algebra}. It follows from Definition \ref{def_infmor} that an $A_\infty$-morphism $\g\to \h$ is just a Maurer-Cartan element in the differential graded associative algebra $\hat{T}_+\Sigma^{-1}\g^*\hat{\otimes}\h$ where $\hat{T}_+\Sigma^{-1}\g$ is considered as a differential graded associative algebra with differential induced by $m_\g$.

Similarly, if $(\g, m_\g)$ is an $L_\infty$-algebra and $\h$ is a differential graded Lie algebra, then an $L_\infty$-morphism $\g\to \h$ is nothing but a Maurer-Cartan element in the differential graded Lie algebra 	$\hat{S}_+\Sigma^{-1}\g^*\hat{\otimes}\h$ where $\hat{S}_+\Sigma^{-1}\g^*$ is considered as a differential graded commutative algebra with differential induced by $m_\g$.
 	
This leads to the following result.
\begin{thm}\label{thm:definfinity}Let $\g$ be a graded vector space: \begin{enumerate}
		\item
		Let $m\in \Der(\hat{T}\Sigma^{-1}\g^*)$ be an $A_\infty$-structure on  $\g$, $\h$ be a differential graded associative algebra and $f$ be a Maurer-Cartan element in the differential graded associative algebra $\hat{T}_+\Sigma^{-1}\g^*\hat{\otimes}\h$ determining an $A_\infty$-morphism $\g\to \h$. Then
		for an element $f^\prime\in \hat{T}_+\Sigma^{-1}\g^*\hat{\otimes}\h$ of degree $1$, the element $f+f^\prime\in \hat{T}_+\Sigma^{-1}\g^*\hat{\otimes}\h$ gives another $A_\infty$-morphism $\g\to\h$ if and only if $f^\prime$ is a Maurer-Cartan element in the
		differential graded associative  algebra $(\hat{T}_+\Sigma^{-1}\g^*\hat{\otimes}\h, d_f)$.	
		\item Let $m\in \Der(\hat{S}\Sigma^{-1}\g^*)$ be an $L_\infty$-structure on  $\g$, $\h$ be a differential graded Lie algebra and $f$ be a Maurer-Cartan element in the differential graded Lie algebra $\hat{S}_+\Sigma^{-1}\g^*\hat{\otimes}\h$ determining an $L_\infty$-morphism $\g\to \h$. Then
		for an element $f^\prime\in \hat{S}_+\Sigma^{-1}\g^*\hat{\otimes}\h$ of degree $1$, the element $f+f^\prime\in \hat{S}_+\Sigma^{-1}\g^*\hat{\otimes}\h$ gives another $L_\infty$-morphism $\g\to\h$ if and only if $f^\prime$ is a Maurer-Cartan element in the
		differential graded  Lie algebra $(\hat{L}_+\Sigma^{-1}\g^*\hat{\otimes}\h, d_f)$.
	\end{enumerate}
\end{thm}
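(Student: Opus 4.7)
The plan is to deduce both (1) and (2) from a single general principle: in a differential graded Lie or associative algebra, shifting by a Maurer-Cartan element replaces the Maurer-Cartan equation by the Maurer-Cartan equation of a new, twisted differential. For the Lie case this is exactly Theorem~\ref{pp:mce}; for the associative case I will record and verify its analogue, then apply each in turn to the dga and dgLa identified in the paragraphs just before the theorem.

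For the associative version, let $(\mathfrak{a},\cdot,d)$ be a dga and suppose $f\in\mathfrak{a}^1$ satisfies $df+f\cdot f=0$. I would first check that the map
\begin{equation*}
d_f(x):=d(x)+f\cdot x-(-1)^{|x|}x\cdot f
\end{equation*}
is a degree $1$ derivation of $(\mathfrak{a},\cdot)$ with $d_f^2=0$, which follows routinely from the Leibniz rule for $d$ together with the Maurer-Cartan equation on $f$. For a degree $1$ element $f'\in\mathfrak{a}^1$ one then computes
\begin{equation*}
d(f+f')+(f+f')\cdot(f+f')=(df+f\cdot f)+df'+f\cdot f'+f'\cdot f+f'\cdot f'=d_f(f')+f'\cdot f',
\end{equation*}
so $f+f'$ is Maurer-Cartan in $(\mathfrak{a},\cdot,d)$ if and only if $f'$ is Maurer-Cartan in the twisted dga $(\mathfrak{a},\cdot,d_f)$.

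With this and Theorem~\ref{pp:mce} in hand, both parts are immediate. For (1), the paragraph preceding the theorem identifies $A_\infty$-morphisms $\g\to\h$ with Maurer-Cartan elements of the dga $\hat{T}_+\Sigma^{-1}\g^*\hat{\otimes}\h$; applying the associative twisting principle to the Maurer-Cartan element $f$ yields the claimed characterization of $f+f'$. For (2), one applies Theorem~\ref{pp:mce} in place of the associative twisting to the dgLa $\hat{S}_+\Sigma^{-1}\g^*\hat{\otimes}\h$, whose Maurer-Cartan elements parametrize $L_\infty$-morphisms $\g\to\h$.

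I do not expect any serious obstacle here; the theorem is a formal consequence of the twisting mechanism. The only non-computational input, which is inherited from the discussion preceding the theorem statement rather than proved again, is the identification of $A_\infty$- and $L_\infty$-morphisms with Maurer-Cartan elements in the two tensor-product algebras, together with verifying that the associative (respectively Lie) structure on those tensor products is compatible with the differential induced by $m_\g$. Granted that input, each part reduces to a direct expansion of the Maurer-Cartan equation as above.
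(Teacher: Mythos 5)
Your argument is correct and takes essentially the same route as the paper, which offers no separate proof but presents the theorem as an immediate consequence of exactly the two ingredients you use: the identification of $A_\infty$- and $L_\infty$-morphisms with Maurer-Cartan elements of $\hat{T}_+\Sigma^{-1}\g^*\hat{\otimes}\h$ and $\hat{S}_+\Sigma^{-1}\g^*\hat{\otimes}\h$, and the twisting of the differential by a Maurer-Cartan element. Your explicit verification of the associative twisting lemma is a worthwhile addition, and it also quietly fixes the one point worth noting: Theorem~\ref{pp:mce} as stated in the paper concerns a graded Lie algebra with zero differential, whereas here the ambient algebras already carry a differential induced by $m_\g$ and $\h$, so the version with the extra $d$-term (which you write out in the associative case and which transfers verbatim to the Lie case) is what is actually required.
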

\begin{rem}
	The reader may wonder whether Theorem \ref{thm:definfinity} can be extended to the $A_\infty$ or $L_\infty$-morphisms for which the target is a genuine $A_\infty$ or $L_\infty$-algebra. The subtlety here is that an $A_\infty$-morphism between two $A_\infty$-algebras is not a Maurer-Cartan element in a differential graded associative algebra but in an $A_\infty$-algebra. Similarly, an $L_\infty$-morphism is, in general, a Maurer-Cartan element in an $L_\infty$-algebra. Thus, one needs to set up a formalism of Maurer-Cartan elements in $A_\infty$ and $L_\infty$-algebras. This can be done, cf. for example \cite{Lazarev}. Once this formalism is in place, an appropriate analogue of Theorem \ref{thm:definfinity} will hold.
\end{rem}

\emptycomment{\begin{rem}
 Since every $L_\infty$-algebra can be obtained via higher derived brackets \cite{Vo}. So, when we want to study deformation theory of mathematical structures we should use higher derived brackets to construct $L_\infty$-algebras which control the deformation theory.
\end{rem}}

An $A_\infty$ or $L_\infty$-morphism whose target is, respectively, a differential graded associative algebra or a differential graded Lie algebra specializes to the notion of a left $A_\infty$-module or an $L_\infty$-representation. Note that if $V$ is a differential graded vector
space then its algebra of endomorphisms $\End(V)$ is naturally a differential graded associative algebra and a differential graded Lie algebra.
\begin{defi}
	Let $\g$ be a graded vector space:\begin{enumerate}
		\item Let $(\g, m_\g)$ be an $A_\infty$-structure on $\g$. The structure of a left $A_\infty$-module over $(\g, m_\g)$ on a differential graded vector space $V$ is an $A_\infty$-morphism $\g\to \End(V)$.
		\item
		Let $(\g, m_\g)$ be an $L_\infty$-structure on $\g$. The structure of an $L_\infty$-representation of $(\g, m_\g)$  on a differential graded vector space $V$ is an $L_\infty$-morphism $\g\to \End(V)$.
	\end{enumerate}
\end{defi}
The following is an immediate consequence of Theorem \ref{thm:definfinity}.
\begin{cor}
	Let $\g$ be a graded vector space and $V$ be a differential graded vector space: \begin{enumerate}
		\item
		Let $m\in \Der(\hat{T}\Sigma^{-1}\g^*)$ be an $A_\infty$-structure on  $\g$ and $f$ is a Maurer-Cartan element in the differential graded algebra $\hat{T}_+\Sigma^{-1}\g^*\hat{\otimes}\End(V)$ determining a left $A_\infty$-module structure on $V$ over $\g$. Then
		for an element $f^\prime\in \hat{T}_+\Sigma^{-1}\g^*\hat{\otimes}\End(V)$ of degree $1$, the element $f+f^\prime\in \hat{T}_+\Sigma^{-1}\g^*\hat{\otimes}\End(V)$ gives another left $A_\infty$-module structure on $V$ over $\g$  if and only if $f^\prime$ is a Maurer-Cartan element in the
		differential graded associative algebra $(\hat{T}_+\Sigma^{-1}\g^*\hat{\otimes}\End(V), d_f)$.	
		\item Let $m\in \Der(\hat{S}\Sigma^{-1}\g^*)$ be an $L_\infty$-structure on  $\g$ and $f$ is a Maurer-Cartan element in the differential graded Lie algebra $\hat{S}_+\Sigma^{-1}\g^*\hat{\otimes}\End(V)$ determining an $L_\infty$-representation structure on $V$ of $\g$. Then
		for an element $f^\prime\in \hat{T}_+\Sigma^{-1}\g^*\hat{\otimes}\End(V)$ of degree $1$, the element $f+f^\prime\in \hat{S}_+\Sigma^{-1}\g^*\hat{\otimes}\End(V)$ gives another $L_\infty$-representation structure on $V$ of $\g$ if and only if $f^\prime$ is a Maurer-Cartan element in the
		differential graded Lie algebra $(\hat{S}_+\Sigma^{-1}\g^*\hat{\otimes}\End(V), d_f)$.
	\end{enumerate}
\end{cor}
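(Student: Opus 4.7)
The plan is to derive this corollary as a direct specialization of Theorem \ref{thm:definfinity}. The essential observation is that for any differential graded vector space $V$, the endomorphism space $\End(V)$ is canonically both a differential graded associative algebra (under composition, with differential $[d_V,-]$) and a differential graded Lie algebra (under the commutator bracket). Hence $\End(V)$ fits into the role of the target $\h$ in Theorem \ref{thm:definfinity}, parts (1) and (2) respectively.

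For part (1), I would begin by recalling the definition of a left $A_\infty$-module structure on $V$ over $(\g,m)$: by definition, this is an $A_\infty$-morphism $\g\to\End(V)$, where $\End(V)$ is regarded as an $A_\infty$-algebra via its underlying dga structure. By the discussion preceding Theorem \ref{thm:definfinity}, such morphisms are in bijection with Maurer-Cartan elements of the dga $\hat{T}_+\Sigma^{-1}\g^*\hat{\otimes}\End(V)$ equipped with the differential induced by $m$ on the first tensor factor and $d_V$ on the second. Thus the given $f$ is an MC element in this dga, and $f+f'$ corresponds to another left $A_\infty$-module structure on $V$ precisely when $f+f'$ is also an MC element. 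Applying Theorem \ref{thm:definfinity}(1) with $\h=\End(V)$ now yields the equivalence with $f'$ being a Maurer-Cartan element in the twisted dga $(\hat{T}_+\Sigma^{-1}\g^*\hat{\otimes}\End(V),\,d_f)$.

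Part (2) proceeds by exactly the same template, replacing the dga structure on $\End(V)$ with its commutator dgLa structure: an $L_\infty$-representation of $(\g,m)$ on $V$ is by definition an $L_\infty$-morphism $\g\to\End(V)$, which is a Maurer-Cartan element of the dgLa $\hat{S}_+\Sigma^{-1}\g^*\hat{\otimes}\End(V)$. Invoking Theorem \ref{thm:definfinity}(2) with this choice of $\h$ converts the statement about pairs $(f,f+f')$ of $L_\infty$-representations into the claim that $f'$ is an MC element in the twisted dgLa $(\hat{S}_+\Sigma^{-1}\g^*\hat{\otimes}\End(V),\,d_f)$, which is exactly the assertion.

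There is essentially no obstacle to surmount: once the identification of left $A_\infty$-modules (respectively $L_\infty$-representations) with $A_\infty$-morphisms (respectively $L_\infty$-morphisms) into $\End(V)$ is in place, the corollary is a tautological restriction of Theorem \ref{thm:definfinity}. The only points deserving a sentence of care are (a) verifying that the dga/dgLa structures on $\End(V)$ used in this specialization are precisely those built into the definition of left $A_\infty$-modules and $L_\infty$-representations, and (b) noting that the twisting differential $d_f$ appearing in the conclusion is the same in both formulations, since it is constructed purely from the bracket (or product) structure together with the Maurer-Cartan element $f$.
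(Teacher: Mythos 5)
Your argument is correct and coincides with the paper's own treatment: the paper presents this corollary as an immediate consequence of Theorem \ref{thm:definfinity}, obtained by taking the target to be $\End(V)$ with its canonical differential graded associative (resp. Lie) algebra structure, exactly as you do. Nothing further is needed.
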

\bigskip
\noindent{\bf Acknowledgements.} This research was partially supported by NSFC (11922110).

%\bibliographystyle{abbrvurl}
%\bibliography{refs}

\begin{thebibliography}{99}

\bibitem{Balavoine-1}
D. Balavoine, Deformation of algebras over a quadratic operad, \emph{ Contemp. Math.}  AMS, 202 (1997), 207-234.




\bibitem{Bai0}
C. Bai, Left-symmetric bialgebras and an analogue of the classical Yang-Baxter equation,
\emph{Commun. Contemp. Math.} 10 (2008),  221-260.




%\bibitem{Bai1}
%C. Bai, Double constructions of Frobenius algebras, Connes cocycles and their duality. \emph{J. Noncommut. Geom. } 4 (2010), no. 4, 475-530.

\bibitem{Bai}
C. Bai, A unified algebraic approach to the classical
Yang-Baxter equation, \emph{J. Phys. A}   40 (2007),  11073-11082.




%\bibitem{BGS}
%C. Bai, L. Guo and Y. Sheng, Bialgebras, the classical Yang-Baxter equation and Manin triples for 3-Lie algebras. \emph{Adv. Theo. Math. Phys.} 23 (1) (2019), 27-74.

\bibitem{Bal} D. Balavoine, Deformations of algebras over a quadratic operad. Operads: Proceedings of Renaissance Conferences (Hartford, CT/Luminy, 1995), \emph{Contemp. Math.} {\bf 202} Amer. Math. Soc., Providence, RI, 1997, 207-34.

%\bibitem{Bu0} D. Burde, Simple left-symmetric algebras with solvable Lie algebra. \emph{Manuscripta Math.} {\bf 95} (1998), 397-411.

\bibitem{Bu} D. Burde, Left-symmetric algebras, or pre-Lie algebras in
geometry and physics, \emph{ Cent. Eur. J. Math.} 4 (2006),  323-357.



\bibitem{Casas-Loday}
J. M. Casas,  J. L. Loday and T. Pirashvili, Leibniz $n$-algebras,   \emph{Forum Math.} 14 (2002), 189-207.



\bibitem{CL}
F. Chapoton and M. Livernet, Pre-Lie algebras and the rooted trees operad, \emph{Internat. Math. Res. Notices}  8 (2001), 395-408.




\bibitem{Lazarev} J. Chuang and A. Lazarev, $L_\infty$-maps and twistins, \emph{Homology Homotopy Appl.} 13 (2011), 175-195.



\bibitem{DT} Y. Daletskii and L. Takhtajan, Leibniz and Lie algebra structures for Nambu
algebra, \emph{Lett. Math. Phys.} 39 (1997), 127-141.



\bibitem{Dz}
A. Dzhumadil$\rm{'}$daev, Cohomologies and deformations of right-symmetric
algebras, \emph{J. Math. Sci.}  93 (1999), 836-876.

%\bibitem{FMY}
%Y. Fr\'egier, M. Markl and D. Yau, The $L_{\infty}$-deformation complex of diagrams of algebras,
%\emph{New York J. Math.} 15 (2009), 353-392.

\bibitem{F-Marco-1}
Y. Fr\'egier and M. Zambon, Simultaneous deformations and Poisson geometry. \emph{Compos. Math.} 151 (2015), 1763-1790.

\bibitem{F-Marco-2}
Y. Fr\'egier and M. Zambon, Simultaneous deformations of algebras and morphisms via derived brackets, \emph{J. Pure Appl. Algebra} 219 (2015), 5344-5362.



\bibitem{Figueroa}
J. Figueroa-O'Farrill, Deformations of 3-algebras, \emph{J. Math. Phys.} 50 (2009), 113514, 27 pp.



\bibitem{Fialowski}
A. Fialowski and A. Mandal, Leibniz algebra deformations of a Lie algebra, \emph{J. Math. Phys.} 49 (2008), 093511, 11 pp.



\bibitem{fp02}
A.~Fialowski and M.~Penkava,
Deformation theory of infinity algebras, \emph{  J. Algebra}, 255 (2002), 59-88.




\bibitem{Filippov}  V. T. Filippov, $n$-Lie algebras,  \emph{Sibirsk. Mat. Zh.} 26 (1985),  126-140.



\bibitem{Fo}
T.~F.~Fox, An introduction to algebraic deformation theory, \emph{J. Pure Appl. Algebra}  84 (1993), 17-41.

\bibitem{Ge0}
M. Gerstenhaber, The cohomology structure of an associative ring, \emph{Ann. Math.}  78 (1963),  267-288.




\bibitem{Ge}
M. Gerstenhaber, On the deformation of rings and algebras, \emph{Ann. Math. (2) }  79 (1964), 59-103.




\bibitem{Ge2}
M. Gerstenhaber,   On the deformation of rings and algebras. II, \emph{Ann. Math.}  84 (1966), 1-19.

\bibitem{Ge3}
M. Gerstenhaber,   On the deformation of rings and algebras. III, \emph{Ann. Math.}  88  (1968), 1-34.

\bibitem{Ge4}
M. Gerstenhaber,   On the deformation of rings and algebras. IV, \emph{Ann. Math.}  99  (1974), 257-276.

\bibitem{Ge5}
M. Gerstenhaber and A. A. Voronov, Homotopy $G$-algebras and moduli space operad, \emph{Internat. Math. Res. Notices}  3 (1995), 141-153.


\bibitem{HL} A. Hamilton and A. Lazarev, Cohomology theories for homotopy algebras and noncommutative geometry,    \emph{ Algebr. Geom. Topol.} 9 (2009), 1503-1583.




\bibitem{Hochschild-1}
G. Hochschild, On the cohomology groups of an associative algebra, \emph{Ann. of Math. (2)} 46 (1945),  58-67.




\bibitem{Hochschild-2}
G. Hochschild, On the cohomology theory for associative algebras, \emph{Ann. of Math. (2)} 47 (1946),  568-579.




\bibitem{Hochschild-3}
G. Hochschild and J.-P. Serre,  Cohomology of Lie algebras, \emph{Ann. of Math. (2)} 57  (1953),  591-603.




\bibitem{Ha} R. Hartshorne, Deformation Theory, \emph{Graduate Texts in Mathematics} 257, Springer, 2010.



\bibitem{Keller} B. Keller, Introduction to $A$-infinity algebras and modules,    \emph{ Homology Homotopy Appl.}
 3 (2001),  1-35.



\bibitem{Kasymov}
Sh. M. Kasymov, On a theory of $n$-Lie algebras, \emph{Algebra i Logika} 26 (1987),  277-297.



\bibitem{KS}
K. Kodaira and D. Spencer, On deformations  of complex analytic structures. I, II, \emph{Ann. of Math. (2)}  67 (1958), 328-466.



\bibitem{K1} M. Kontsevich, Operads and motives in deformation quantization, \emph{Lett. Math. Phys.} 48 (1999), 35-72.

\bibitem{K2} M. Kontsevich, Deformation quantization of Poisson manifolds, \emph{Lett. Math. Phys.}  66 (2003), 157-216.

\bibitem{KSo}
M. Kontsevich and Y. Soibelman, Deformation theory. I [Draft], http://www.math.ksu.edu/~soibel/Book-vol1.ps, 2010.

\bibitem{ls93}
T.~Lada and J.~Stasheff, Introduction to {SH} lie algebras for physicists,
\emph{Internat. J. Theoret. Phys.}  32 (1993), 1087-1103.




\bibitem{Loday-P}
J.-L. Loday and T. Pirashvili, Universal enveloping algebras of Leibniz algebras and (co)homology,
\emph{Math. Ann.} 296 (1993), 139-158.



\bibitem{LV}
J.-L. Loday and B. Vallette, Algebraic Operads, Springer, 2012.

\bibitem{Maz} B. Mazur, Perturbations, deformations, and variations (and ``near-misses") in geometry, physics, and number theory, \emph{Bull AMS}  41 (2004), 307-336.

\bibitem{Nijenhuis}
A. Nijenhuis, Sur une classe de proprits communes quelques types differents d'algebres, \emph{ Enseign. Math. (2) } 14 (1968), 225-277.



\bibitem{NR} A. Nijenhuis  and R. Richardson,  Cohomology and deformations in graded Lie algebras, \emph{ Bull.
Amer. Math. Soc.} 72 (1966), 1-29.

\bibitem{NR2} A. Nijenhuis and R. Richardson,  Commutative  algebra cohomology and deformations of Lie and associative algebras, \emph{ J. Algebra} 9 (1968), 42-105.

\bibitem{Ri} M.~A.~Rieffel, Deformation quantization of Heisenberg manifolds, \emph{Comm. Math. Phys.}  122 (1989), 531-562.

\bibitem{NR bracket of n-Lie}
M. Rotkiewicz, Cohomology ring of $n$-Lie algebras, \emph{Extracta Math.} 20 (2005), 219-232.

\bibitem{Song-Tang-Makhlouf}
L. Song, A. Makhlouf and  R. Tang, On non-abelian extensions of 3-Lie algebras, \emph{Commun. Theor. Phys.} 69 (2018), 347-356.

\bibitem{Stasheff}
J. Stasheff, The intrinsic bracket on the deformation complex of an associative algebra, \emph{J. Pure Appl. Algebra} 89 (1993), 231-235.



%\bibitem{Stasheff-1}
%M. Schlessinger and J. Stasheff, The Lie algebra structure of tangent cohomology and deformation theory. \emph{J. Pure Appl. Algebra} 38 (1985), no. 2-3, 313-322.

\bibitem{Sheng-Tang}
Y. Sheng and R. Tang, Deformations of Kupershmidt operators on Leibniz algebras and Leibniz bialgebras, arXiv:1902.03033.

%\bibitem{TBGS}
%R. Tang, C. Bai, L. Guo and Y. Sheng, Deformations and their controlling cohomologies of $\huaO$-operators. \emph{Comm. Math. Phys.} {\bf 368} (2019), 665-700.

\bibitem{Tcohomology}
L. Takhtajan, A higher order analog of Chevalley-Eilenberg complex and deformation theory of $n$-gebras, \emph{St. Petersburg Math. J.} 6 (1995), 429-438.


%\bibitem{Vo}
%Th. Voronov, Higher derived brackets and homotopy algebras, \emph{J. Pure Appl. Algebra}   202 (2005), 133-153.

\bibitem{Voronov}
 A. A. Voronov and M. Gerstenkhaber, Higher-order operations on the Hochschild complex,  \emph{Funct. Anal. Appl.} 29 (1995), 1-5.

\bibitem{WBLS}
Q. Wang, Y. Sheng, C. Bai and J. Liu, Nijenhuis operators on pre-Lie algebras,  \emph{Commun. Contemp. Math.} 21 (2019), 1850050, 37 pp.




\end{thebibliography}

\setlength{\parindent}{0pt}
\end{document}